\newtheorem{theorem}{Theorem}
\newtheorem{lemma}{Lemma}
\newtheorem{proposition}{Proposition}
\newtheorem{definition}{Definition}
\newtheorem{corollary}{Corollary}
\newtheorem{remark}{Remark}
\newtheorem{assumption}{Assumption}
\newcommand{\be}{\begin{equation}}
\newcommand{\ee}{\end{equation}}
\newcommand{\beq}{\begin{eqnarray}}
\newcommand{\eeq}{\end{eqnarray}}
\newcommand{\eps}{\varepsilon}
\newcommand{\ced}{\end{proof}}
\begin{document}
\begin{frontmatter}
\title{A Probabilistic Interpretation of the Master Equation Arising from Mean Field Games with Jump Diffusion}
\date{}
\runtitle{}
\author{\fnms{Jisheng}
 \snm{LIU}\corref{}\ead[label=e1]{jsliu23@m.fudan.edu.cn}}
\address{School of Mathematical Sciences\\
Fudan University, China \\
\printead{e1}}

\author{\fnms{Jing}
 \snm{ZHANG}\corref{}\ead[label=e2]{zhang\_jing@fudan.edu.cn}}
\thankstext{T2}{The work is supported by National Key R\&D Program of China (2022YFA1006101), National Natural Science Foundation of China (12271103, 12031009, 12326603) and Shanghai Science and Technology Commission Grant (21ZR140860).}
\address{School of Mathematical Sciences \\
Fudan University, China  \\
\printead{e2}}

\runauthor{J. Liu and J. Zhang}

\begin{abstract}
In this paper we study the classical solution to the master equation arising from mean-field games (MFGs) driven by jump-diffusion processes. The master equation, a nonlinear partial differential equation on Wasserstein space, characterizes the value function of MFGs and is challenging to analyze directly due to its measure-valued derivatives. We propose a probabilistic interpretation using coupled McKean–Vlasov forward-backward stochastic differential equations (MV-FBSDEs) with jumps. Under suitable Lipschitz and differentiability assumptions on the coefficients, we first establish the well-posedness of the MV-FBSDEs on a small time interval via a contraction mapping argument. We then prove the existence and regularity of the first- and second-order derivatives of the solutions with respect to the spatial and measure variables, relying on careful estimates involving jump terms and measure derivatives. Finally, we show that the decoupling field of the MV-FBSDEs satisfies the master equation in the classical sense, providing both existence and uniqueness of the solution. Our work extends earlier results on diffusion-driven MFGs to the jump-diffusion setting and offers a probabilistic framework for analyzing and numerically solving such kind of master equations.
\end{abstract}

\begin{keyword}
\kwd{Mean-field games, Master equations, Jump-diffusion processes,
Probabilistic interpretation, Classical solutions}
\end{keyword}
\begin{keyword}[class=AMS]
\kwd[Primary ]{60H15; 35R60; 31B150}
\end{keyword}

\end{frontmatter}

%
%
%
%
%
%
%
%
%
%
%
%
%
\section{Introduction}


This paper is devoted to providing a probabilistic interpretation of the following mean-field master equations, which come from the study of mean-field game problems driven by jump-diffusion processes,
\begin{equation}\label{eq master introduction game general}\begin{split}
\partial_t u(t,x,\mu)=Au(t,x,\mu)+f(t,x,\mu,u(t,x,\mu),&Bu(t,x,\mu))+\int_{\mathbb{R}^d}\, [Cu(t,x,\mu)](\theta)d\mu(\theta),\\& (t,x,\mu)\in[0,T]\times\mathbb{R}^d\times\mathcal{P}_2(\mathbb{R}^d),
\end{split}\end{equation}
where 
$A$ and $B$ are the second- and first-order differential operators w.r.t. $x$, respectively, while $C$ is a non-local operator involving differentiation w.r.t. $\mu$.
\\In recent years, mean-field problems have emerged as an extensively studied class of problems, primarily arising from multi-particle systems. A key characteristic is that their dynamics depend on a measure-valued variable, meaning that the evolution of the state processes is inherently coupled with their distribution.
Mean-field game (MFG) theory was initiated independently by Caines et al. \cite{huang2006large} and Lasry and Lions \cite{lasry2007mean}. Subsequently, the theory was advanced by Lions \cite{lions2007cours}, Cardaliaguet et al. \cite{cardaliaguet2010notes}, and Carmona and Delarue \cite{carmona2014master,carmona2018probabilistic}. A key outcome of this development is the master equation, which was established as a powerful tool for analyzing MFG. It takes the following form:
\begin{align*}
0&=\partial_t u(t,x,\mu)+\frac{1}{2}\text{Tr}\Big[(\sigma\sigma^\intercal )(t,x)\partial_x^2u(t,x,\mu)\Big]+H(t,x,\mu,D_xu(t,x,\mu))\\
&+\int_{\mathbb{R}^d}\bigg[\frac{1}{2}\text{Tr}\Big[(\sigma\sigma^\intercal )(t,x)\partial_v\partial_\mu u(t,x,\mu,v)\Big]+\partial_\mu u(t,x,\mu,v)\cdot \partial_p H(t,v,\mu,D_x u(t,v,\mu))\bigg]d\mu(v),
\end{align*}
with terminal condition $u(T,x,\mu)=G(x,\mu)$, where $H$ denotes the Hamiltonian function.
\\Furthermore, a connection between master equations and forward-backward stochastic differential equations (FBSDEs) via the Fokker–Planck equation has been established in recent work by Cardaliaguet et al. \cite{cardaliaguet2019master} and Mou et al. \cite{meszaros2024mean,mou2024mean}. In these studies, the well-posedness of second-order master equations with only a diffusion term  was demonstrated under various assumptions on the coefficients.
\\To address the challenges concerning classical solutions and numerical computation for master equations, Chassagneux et al. \cite{chassagneux2014probabilistic} provided a probabilistic representation of their classical solutions. This approach was motivated by the analytical difficulties inherent to master equations, which stem from their measure-dependent derivatives. In contrast, McKean-Vlasov forward-backward stochastic differential equations (MV-FBSDEs) are more tractable and have some existing results.
Under sufficiently smooth coefficients, they proved the well-posedness (existence, uniqueness, and stability) of solutions to the relevant MV-FBSDEs, including their measure derivatives. The connection between these probabilistic solutions and the classical solutions of the master equation was then rigorously established by using the It\^o-Lions formula. Finally, based on this theoretical foundation, they constructed numerical algorithms for computing the classical solutions of master equations, see \cite{chassagneux2019numerical}. 
To the best of our knowledge, however, this probabilistic and numerical framework has not been applied to mean-field games driven by jump-diffusion processes.\\
Recently, Guo et al. \cite{guo2023ito} established a jump-diffusion It\^o-Lions formula and derived the corresponding Hamilton-Jacobi-Bellman equation for mean-field control problems driven by jump-diffusion processes as follows,
\begin{equation*}\left\{
\begin{split}
&\partial_t u(t,\mu)+\mathbb{E}\Big[\inf_{a\in A}H(t,\xi,a,\mu,\partial_\mu u(t,\mu,\xi),\partial_v\partial_\mu u(t,\mu,\xi),\frac{\delta u}{\delta\mu}(t,\mu,\xi))\Big]=0,\\
&u(T,\mu)=G(\mu),
\end{split}\right.\end{equation*}
where $\mu=\mathbb{P}_\xi$ and
\begin{align*}
H(t,x,a,\mu,p,M,r(\cdot)):=&f(t,x,\mu,a)+b(t,x,\mu,a)\cdot p+\frac{1}{2}M:\sigma^{\intercal}\sigma(t,x,\mu,a)\\
&+\int_E\, (r(x+h(t,x,\mu,a,\theta))-r(x))d\nu(\theta).
\end{align*}
Compared to the master equation discussed above, this HJB equation introduces an additional term that arises from the structure of the jump-diffusion It\^o-Lions formula (see \cite{guo2023ito}). Consequently, it is foreseeable that if the state equation of mean-field games is driven by jump-diffusion processes, the associated master equation will adopt a similar form (see Section 2.4). 
Moreover, the jump-diffusion It\^o-Wentzell-Lions formula developed in \cite{jisheng2025wentzell} provides the necessary tool to study mean-field control/game problems involving both random coefficients and jumps, enabling the derivation of corresponding stochastic master equations, which we will study in our future work.\\
In this paper, we investigate the following master equations for jump-diffusion processes via a probabilistic approach and establish the well-posedness of their classical solutions:
\begin{align*}
&\partial_tV(t,x,\mu)+\partial_x V(t,x,\mu)b(t,x,\mu,V(t,x,\mu),\partial_xV(t,x,\mu)\sigma(t,x,\mu,V(t,x,\mu)))\nonumber\\
&+\frac{1}{2}\partial_x^2V(t,x,\mu):\sigma\sigma^{\intercal}(t,x,\mu,V(t,x,\mu))\\
&+\int_E\, (V(t,x+h(t,x,\mu,V(t,x,\mu),\theta),\mu)-V(t,x,\mu))\nu(d\theta)\nonumber\\
&+\int_{\mathbb{R}^d}\, \bigg[\partial_\mu V(t,x,\mu,y)b(t,y,\mu,V(t,y,\mu),\partial_xV(t,y,\mu)\sigma(t,y,\mu,V(t,y,\mu)))\nonumber\\
&\qquad\qquad+\frac{1}{2}\partial_y\partial_\mu V(t,x,\mu,y):\sigma\sigma^{\intercal}(t,y,\mu,V(t,y,\mu))\nonumber\\
&\qquad\qquad+\int_E\Big(\frac{\delta V}{\delta\mu}(t,x,\mu,y+h(t,y,\mu,V(t,y,\mu),\theta))-\frac{\delta V}{\delta\mu}(t,x,\mu,y)\Big)\nu(d\theta)\bigg]\mu(dy)=0,
\end{align*}
with terminal condition $V(T,x,\mu)=g(x,\mu)$ for any $\mu\in\mathcal{P}_2(\mathbb{R}^d),x\in\mathbb{R}^d$.\\
The well-posedness of McKean-Vlasov stochastic differential equations (SDEs) driven by Poisson random measures, along with the existence of solution derivatives, was first established by Hao and Li \cite{hao2016mean}. Subsequently, Li \cite{li2018mean} extended these results to the backward case, proving the existence and uniqueness of solutions and their derivatives for McKean-Vlasov backward differential equations (BSDEs) with Poisson jumps. For the coupled forward-backward system with jumps, however, analogous results are absent. This gap forms the essential foundation needed to support a probabilistic interpretation of the master equation, which we address in this work.
First, we establish the well-posedness of MV-FBSDEs driven by jump-diffusion processes. Compared to conventional approaches, our analysis requires systematically handling measure-dependent estimates and the terms arising from the jump process. To overcome these additional complexities, we derive small-time horizon conditions that guarantee well-posedness. Next, to provide a probabilistic interpretation for the classical solutions, we need to prove the existence of derivatives of the MV-FBSDEs with respect to their initial values. The key challenges here involve obtaining the $2p$-estimates for solutions of jump-diffusion MV-FBSDEs and formulating the equations governing these derivatives. For the latter, our approach adapts the methodology from \cite{li2018mean}—originally developed for decoupled MV-FBSDEs—to our coupled setting.\\
It is important to note that we impose a different regularity condition on the jump coefficient from that in \cite{hao2016mean}. In their work, to accommodate the potential infinity of the intensity measure $\nu$, the coefficient $h$ is required to satisfy the following  boundedness and Lipschitz conditions:
\begin{align*}
|h(t,x,\mu,a,\theta)|\leq& L(1\wedge|\theta|),\\
|h(t,x_1,\mu_1,a,\theta)-h(t,x_2,\mu_2,a,\theta)|\leq &L(1\wedge|\theta|)(|x_1-x_2|+W_2(\mu_1,\mu_2)).
\end{align*}
In contrast, our framework assumes the intensity measure $\nu$ to be finite. This enables us to adopt a more general condition, similar to the one used in \cite{chen2015semi}.\\
The remainder of this paper is organized as follows. Section 2 introduces the necessary notations, the concept of measure derivatives, Poisson random measures, and formulates the master equation for MFGs with jump-diffusion. In Section 3, we augment the standard Lipschitz assumptions with a small-time horizon condition. Within this framework, we establish the well-posedness (existence and uniqueness) of solutions to the associated MV-FBSDEs. Sections 4 and 5 are devoted to proving the existence of first- and second-order derivatives of the solutions to the (MV-)FBSDEs, respectively. Finally, Section 6 establishes the equivalence between the classical solution of the master equation and the solution of the coupled (MV-)FBSDE system.

\section{Preliminary}
\subsection{Notations}
Let $(\Omega,\mathscr{F},\mathbb{F}:=(\mathscr{F}_t)_{t\geq0},\mathbb{P})$ be a standard filtered probability space satisfying the usual conditions. For a random variable $X$ defined on $(\Omega,\mathscr{F},\mathbb{P})$, we denote its distribution as  $\mathbb{P}_X$.
Let $\mathcal{P}(\mathbb{R}^d)$ denote the set of all probability measures on $\mathbb{R}^d$, and the $p$-Wasserstein space of measures on $\mathbb{R}^d$ is defined as $$\mathcal{P}_p(\mathbb{R}^d):=\left\{\mu\in\mathcal{P}(\mathbb{R}^d)\bigg|\left\lVert \mu\right\rVert_p:=\bigg(\int_{\mathbb{R}^d}|x|^p\mu(dx)\bigg)^{\frac{1}{p}} <\infty\right\},$$ 
equipped with the distance $$W_p(\mu,\nu):=\inf_{\pi\in\Pi_{\mu,\nu}}\bigg(\int_{\mathbb{R}^d\times\mathbb{R}^d}  \,\left\lvert x_1-x_2\right\rvert^p \pi(dx_1,dx_2)\bigg)^{\frac{1}{p}},\quad \forall\mu,\nu\in \mathcal{P}_p(\mathbb{R}^d),$$ 
where $\Pi_{\mu,\nu}$ is the set of all the transport from $\mu$ to $\nu$. 
\\For vectors $a,b$ in $\mathbb{R}^d$, $ab=a\cdot b:=\sum_{i=1}^d a_i b_i$, where $a_i,b_i$ are the $i$th components of $a,b$ respectively. For matrix $A,A'\in \mathbb{R}^{d\times d}$, the trace of $A$ is defined by $Tr(A):=\sum_{i=1}^d A_{i,i}$. Here, for convenience, we write $Tr(A\cdot A')$ as $A:A'$. We denote by $|a|$ the $\mathbb{R}^d$ norm of the vector $a$ and by $\|A\|$ the Frobenius norm of the matrix $A$.\\
The space $L^2(\Omega;\mathbb{R}^d)$ is the set of all $\mathbb{R}^d-$valued $\mathscr{F}-$measurable and square integrable random variables. We denote by $\mathcal{S}_{\mathbb{F}}^2(0,T;\mathbb{R}^d)$ the set of all $\mathbb{R}^d-$valued $\mathbb{F}-$predictable c{\`a}dl{\`a}g such that $$\|X\|^2_{\mathcal{S}^2_\mathbb{F}(0,T;\mathbb{R}^d)}:=\mathbb{E}\sup_{t\in[0,T]}|X_t|^2 <\infty$$ 
and by $\mathcal{M}^2_\mathbb{F}(0,T;\mathbb{R}^d)$ the class of $\mathbb{R}^d-$valued $\mathbb{F}-$predictable c{\`a}dl{\`a}g such that 
$$\|X\|^2_{\mathcal{M}^2_{\mathbb{F}}(0,T;\mathbb{R}^d)}:=\mathbb{E}\int_0^T|X_t|^2dt<\infty.$$
Moreover, by $\mathcal{K}^2_\nu(0,T;\mathbb{R}^d)$ we denote the class of $\mathbb{R}^d-$valued $\mathcal{P}_0\times \mathcal{B}(E)$-measurable functions with parameter $\theta\in E$ such that
$$\|X\|_{\mathcal{K}^2_\nu(0,T;\mathbb{R}^d)}^2:=\mathbb{E}\int_0^T \int_E |X_t(\theta)|^2\nu(d\theta)dt<\infty,$$
for some measure $\nu$. Here $\mathcal{P}_0$ denotes the $\sigma$-field of $\mathbb{F}-$predictable subsets of $\Omega\times[0,T]$. 
When there is no confusion, we omit $\mathbb{R}^d$ from the notation.
\\For any $k=0,1,\cdots$, $C^k(E;\mathbb{R}^d)$ is the set of all $k-$th differentiable functions $f:E\to\mathbb{R}^d$ with continuous derivatives and
$C^k_b(E;\mathbb{R}^d)$ denotes the set of functions $f\in C^k(E;\mathbb{R}^d)$ such that its derivatives and itself are bounded. When $k=0$, we simply denote them as $C(E;\mathbb{R}^d)$ and $C_b(E;\mathbb{R}^d)$, respectively.\\
By saying that $\tilde{X}$ is an independent copy of the random variable $X$ defined on $(\Omega,\mathscr{F},\mathbb{P})$, we mean that $\tilde{X}$ is defined on another probability space $(\tilde{\Omega},\tilde{\mathscr{F}},\tilde{\mathbb{P}})$, has the same law as $X$, and is independent of $X$ when both are considered on the product space $(\Omega\times\tilde{\Omega}, \mathscr{F}\otimes\tilde{\mathscr{F}}, \mathbb{P}\otimes\tilde{\mathbb{P}})$.
More precisely, on the product space we define $X(\omega,\tilde{\omega}):=X(\omega)$ and $\tilde{X}(\omega,\tilde{\omega}):=X'(\tilde{\omega})$, where $X'$ is a random variable on $(\tilde{\Omega},\tilde{\mathscr{F}},\tilde{\mathbb{P}})$ with $\mathcal{L}(X')=\mathcal{L}(X)$.
Here $\tilde{\mathbb{E}}[\cdot]$ denotes the expectation with respect to $\tilde{\mathbb{P}}$ on the auxiliary space $\tilde{\Omega}$. Similarly, $\bar{X}$ denotes another independent copy of $X$, defined on a further space $(\bar{\Omega},\bar{\mathscr{F}},\bar{\mathbb{P}})$.

\subsection{Differentiability of functions w.r.t. probability measures}
In this paper, we will work with two types of measure derivatives: the Lions derivative, defined via the Fr\'echet derivative in a Hilbert space, and the linear derivative, defined intrinsically on $\mathcal{P}_2(\mathbb{R}^d)$. This subsection recalls the necessary definitions and relevant propositions. For the details, we refer the readers to, for example, \cite{cardaliaguet2010notes,cardaliaguet2019master,carmona2018probabilistic,gangbo2019differentiability}.\\
Assume that $\mathscr{F}$ is atomless (i.e., $\forall E\in \mathscr{F}$ with $\mathbb{P}(E)>0$, there exists $E'\subset E$ in $\mathscr{F}$ s.t. $0<\mathbb{P}(E')<\mathbb{P}(E)$). For any map $U:\mathcal{P}_2(\mathbb{R}^d)\to\mathbb{R}$, we define its lift $\widetilde{U}$ on $L^2(\Omega;\mathbb{R}^d)$ by
\begin{eqnarray*}
\widetilde{U}(X):=U(\mathbb{P}_{X}) ,\ \ \ \forall X\in L^2(\Omega;\mathbb{R}^d).
\end{eqnarray*}
Since $X\in L^2(\Omega;\mathbb{R}^d)$, we have $\mathbb{P}_{X}\in \mathcal{P}_2(\mathbb{R}^d)$ by definition, which makes the lift meaningful.\\
The Lions derivative is defined by exploiting the fact that $L^2(\Omega;\mathbb{R}^d)$ is a Hilbert space, where Fr{\'e}chet differentiability is well-defined. Precisely, 
$\widetilde{U}$ is said to be Fr{\'e}chet differentiable at $X_0$ if there exists a linear continuous map $D\widetilde{U}(X_0):L^2(\Omega;\mathbb{R}^d)\to\mathbb{R}$ such that
\begin{eqnarray*}
\widetilde{U}(X)- \widetilde{U}(X_0)=\mathbb{E}[D\widetilde{U}(X_0)(X-X_0)]+o(\left\lVert X-X_0\right\rVert_{L^2} ), \ \mbox{as}\ \left\lVert X-X_0\right\rVert_{L^2}\to 0.
\end{eqnarray*}
It is shown in \cite{carmona2018probabilistic} (Proposition 5.25) that the law of $D\widetilde{U}(X_0)$ depends on $X_0$ only via its law $\mathbb{P}_{X_0}$, and there exists a Borel function $h_{\mathbb{P}_{X_0}}:\mathbb{R}^d\to\mathbb{R}^d$ satisfying
\begin{eqnarray}\label{lionsderivative}
D\widetilde{U}(X_0)=h_{\mathbb{P}_{X_0}}(X_0).
\end{eqnarray}
\begin{definition}\label{Lionsderiv}
$U$ is differentiable at $\mu_0:=\mathbb{P}_{X_0}\in \mathcal{P}_2(\mathbb{R}^d)$ if its lift $\widetilde{U} $ is Fr{\'e}chet differentiable at $X_0$. In this case, 
we define the function $h_{\mathbb{P}_{X_0}}$ shown in (\ref{lionsderivative}) as the Lions derivative of $U$ at $\mu_0$, denoted by $\partial_{\mu}U(\mu_0,\cdot)$.
\end{definition}
We say that the function $U:\mathcal{P}_2(\mathbb{R}^d)\to\mathbb{R}$ is of class $\mathscr{C}^1(\mathcal{P}_2(\mathbb{R}^d))$ if it is Lions differentiable and its Lions derivative $\partial_\mu U(\mu,y)$ is jointly continuous in $(\mu,y)$. Furthermore, we say that $U$ is in $\mathscr{C}^{1,1}(\mathcal{P}_2(\mathbb{R}^d))$ if $U\in \mathscr{C}^1(\mathcal{P}_2(\mathbb{R}^d))$ and for any $\mu\in\mathcal{P}_2(\mathbb{R}^d)$, the map $y\mapsto \partial_\mu U(\mu,y)$ is differentiable, and the derivative $\partial_y\partial_\mu U(\mu,y)$ is jointly continuous in $(\mu,y)$.
\\Now we introduce the linear derivative on Wasserstain space $\mathcal{P}_2(\mathbb{R}^d)$.
\begin{definition}\label{linearderiv}
We say that $U:\mathcal{P}_2(\mathbb{R}^d)\to \mathbb{R}$ is linear differentiable if there exists a jointly continuous and bounded map $\frac{\delta U}{\delta\mu}: \mathcal{P}_2(\mathbb{R}^d)\times\mathbb{R}^d\to \mathbb{R}$ such that
\begin{eqnarray*}
U(\mu')-U(\mu)=\int_{0}^{1}  \,\int_{\mathbb{R}^d} \,\frac{\delta U}{\delta\mu}((1-h)\mu+h\mu',y)(\mu'-\mu)dy dh, \quad \forall \mu,\mu'\in\mathcal{P}_2(\mathbb{R}^d).
\end{eqnarray*}
Then $\frac{\delta U}{\delta\mu}$ is called a linear derivative. Moreover, we adopt the normalization convention
\begin{eqnarray*}
\int_{\mathbb{R}^d} \,\frac{\delta U}{\delta\mu}(\mu,y)\mu(dy)=0,\quad\forall\mu\in\mathcal{P}_2(\mathbb{R}^d).
\end{eqnarray*}
\end{definition}
\begin{proposition}(\cite{cardaliaguet2019master}, Proposition 2.2.3)
Assume that $U$ is linear differentiable. Moreover, $\partial_y\frac{\delta U}{\delta\mu}(\mu,y)$ exists and is jointly continuous and bounded on $\mathcal{P}_2(\mathbb{R}^d)\times\mathbb{R}^d$. Then for any Borel measurable map $\phi:\mathbb{R}^d\to\mathbb{R}^d$ 
with at most linear growth, the map $s\to U((id_{\mathbb{R}^d}+s\phi)\sharp \mu)$ is differentiable at 0 and
\begin{eqnarray*}
\frac{d}{ds}U((id_{\mathbb{R}^d}+s\phi)\sharp \mu)_{|_{s=0}}=\int_{\mathbb{R}^d} \,\partial_y\frac{\delta U}{\delta\mu}(\mu,y)\phi(y)\mu(dy) ,
\end{eqnarray*}
where $id$ denotes the identity map.
\end{proposition}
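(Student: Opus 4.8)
The plan is to reduce the assertion to ordinary differentiation under the integral sign by feeding the pushforward $\mu_s:=(id_{\mathbb{R}^d}+s\phi)\sharp\mu$ into the linear derivative formula of Definition \ref{linearderiv}, and then to pass to the limit $s\to0$ by dominated convergence. As a preliminary, I would record that the at most linear growth of $\phi$ together with $\mu\in\mathcal{P}_2(\mathbb{R}^d)$ gives $\int_{\mathbb{R}^d}|\phi(y)|^2\mu(dy)<\infty$; hence $\mu_s\in\mathcal{P}_2(\mathbb{R}^d)$, and, using $y\mapsto(y,y+s\phi(y))$ as a coupling, $W_2(\mu_s,\mu)\le|s|\big(\int_{\mathbb{R}^d}|\phi(y)|^2\mu(dy)\big)^{1/2}\to0$ as $s\to0$.

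Applying the formula of Definition \ref{linearderiv} with $\mu'=\mu_s$ gives, writing $m_h^s:=(1-h)\mu+h\mu_s$,
\begin{eqnarray*}
U(\mu_s)-U(\mu)=\int_0^1\int_{\mathbb{R}^d}\frac{\delta U}{\delta\mu}(m_h^s,y)(\mu_s-\mu)(dy)\,dh.
\end{eqnarray*}
Next I would exploit that $\mu_s$ is the image of $\mu$ under $y\mapsto y+s\phi(y)$ to rewrite the inner integral as $\int_{\mathbb{R}^d}\big[\frac{\delta U}{\delta\mu}(m_h^s,y+s\phi(y))-\frac{\delta U}{\delta\mu}(m_h^s,y)\big]\mu(dy)$, and then apply the fundamental theorem of calculus in the spatial variable (legitimate since $\partial_y\frac{\delta U}{\delta\mu}$ exists and is continuous) to extract a factor of $s$:
\begin{eqnarray*}
\frac{U(\mu_s)-U(\mu)}{s}=\int_0^1\int_{\mathbb{R}^d}\int_0^1\partial_y\frac{\delta U}{\delta\mu}\big(m_h^s,y+\tau s\phi(y)\big)\phi(y)\,d\tau\,\mu(dy)\,dh,\qquad s\neq0.
\end{eqnarray*}

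It then remains to let $s\to0$ inside this triple integral. For fixed $(h,\tau,y)$ one has $y+\tau s\phi(y)\to y$ and, since $W_2(m_h^s,\mu)\le h\,W_2(\mu_s,\mu)\to0$, the base-point measures converge to $\mu$; joint continuity of $\partial_y\frac{\delta U}{\delta\mu}$ on $\mathcal{P}_2(\mathbb{R}^d)\times\mathbb{R}^d$ then forces the integrand to converge pointwise to $\partial_y\frac{\delta U}{\delta\mu}(\mu,y)\phi(y)$. For domination, the boundedness of $\partial_y\frac{\delta U}{\delta\mu}$ by some constant $M$ yields the $(s,h,\tau)$-uniform bound $M|\phi(y)|$, which is $\mu$-integrable because $\phi$ has at most linear growth and $\mu\in\mathcal{P}_2(\mathbb{R}^d)\subset\mathcal{P}_1(\mathbb{R}^d)$. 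Dominated convergence applies, and since the limiting integrand is independent of $h$ and $\tau$ the two outer integrals collapse, leaving exactly $\int_{\mathbb{R}^d}\partial_y\frac{\delta U}{\delta\mu}(\mu,y)\phi(y)\mu(dy)$, as claimed.

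The main obstacle I anticipate is precisely this final passage to the limit: one must simultaneously secure pointwise convergence of the integrand — which requires the uniform-in-$h$ convergence $m_h^s\to\mu$ in $W_2$ combined with the assumed joint continuity — and an integrable dominating function drawn from the boundedness of $\partial_y\frac{\delta U}{\delta\mu}$ and the growth hypothesis on $\phi$. Everything else (the coupling estimate for $W_2(\mu_s,\mu)$, the pushforward rewriting, and the one-variable Taylor expansion) is routine once these two ingredients are in hand.
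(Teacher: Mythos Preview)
The paper does not supply its own proof of this proposition: it is quoted from \cite{cardaliaguet2019master}, Proposition 2.2.3, and left without argument. Your proposal is correct and is essentially the standard proof from that reference---expand $U(\mu_s)-U(\mu)$ via the linear derivative, use the pushforward to reduce to an integral against $\mu$, apply the fundamental theorem of calculus in the spatial variable to extract the factor $s$, and conclude by dominated convergence using the boundedness of $\partial_y\frac{\delta U}{\delta\mu}$ and the linear growth of $\phi$.
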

It follows from Theorem 1.20 in \cite{cardaliaguet2010notes} that, under suitable regularity, the Lions derivative and the linear derivative satisfy the following relation:
\begin{eqnarray*}
\partial_{\mu}U(\mu,y)=\partial_y\frac{\delta U}{\delta\mu}(\mu,y).
\end{eqnarray*}


\subsection{Poisson random measures}
Poisson random measures play a fundamental role in the study of L{\'e}vy processes, with several equivalent definitions available. We follow the one given in \cite{rudiger2004stochastic}.\\
Let $E$ be a separable Banach space and $\mathcal{B}(E)$ its Borel $\sigma$-algebra. 
\begin{definition}
A process $(P_t)_{t\geq 0}$ taking values in $(E,\mathcal{B}(E))$ is called an $\mathbb{F}$-L{\'e}vy process on $(\Omega,\mathscr{F},\mathbb{F},\mathbb{P})$ if:
\begin{enumerate}
\item $P_0=0$, a.s.;
\item $(P_t)_{t\geq 0}$ is $\mathbb{F}$-adapted;
\item it has stationary and independent increments: for $0\leq s<t$, $P_t-P_s$ is independent of $\mathscr{F}_s$ and has the same distribution as $P_{t-s}$.
\end{enumerate}
\end{definition}
It is known (see, e.g., \cite{applebaum2009levy}) that an $\mathbb{F}$-L{\'e}vy process has a c{\`a}dl{\`a}g version. We set $P_{t-}:=\lim_{s\to t-}P_s$ and $\Delta P_t:=P_t-P_{t-}$.\\
For $A\in\mathcal{B}(E)$ such that $0\notin\overline{A}$ (where $\overline{A}$ denotes the closure of $A$), define
\begin{align*}
N(t,A):=\sum_{0<s\leq t}\mathbf{1}_{A}(\Delta P_s)=\#\{0<s\leq t:\Delta P_s\in A\},
\end{align*}
and for any $0\leq s<t$,
\begin{align*}
N((s,t],A):=N(t,A)-N(s,A).
\end{align*}
Then $N(t,A)$ is an $\mathbb{F}-$adapted counting process satisfying
\begin{align*}
\mathbb{P}(N(t,A)=k)=e^{-\nu_t(A)}\frac{(\nu_t(A))^k}{k!},\ \ \ {\text{where}}\ \ \nu_t(A):=\mathbb{E}[N(t,A)].
\end{align*}
Assume that for any $t$, $\nu_t$ is a finite measure on $(E,\mathcal{B}(E))$, i.e. $\nu_t(E)<\infty$. The finite intensity measure $\nu$ of $N$ is then defined by
\begin{align*}
\nu(A)(t-s):=\nu_t(A)-\nu_s(A)=\mathbb{E}[N((s,t],A)].
\end{align*}
Finally, the compensated Poisson random measure is defined as $\widetilde{N}(ds,de):=N(ds,de)-\nu(de)ds$.
For any function $f:\mathbb{R}_{+}\times (E\backslash \{0\})\to \mathbb{R}$ that is $\mathcal{B}(\mathbb{R}_{+}\times (E\backslash \{0\}))\backslash \mathcal{B}(\mathbb{R})$ measurable, 
the stochastic integral with respect to $\widetilde{N}$ is defined in the natural way: first for simple functions and then extended to general ones via approximation. For precise details, we refer to Definition 3.1 in \cite{rudiger2004stochastic}.

\subsection{Mean-field games with jump diffusion}
In this subsection, we present the master equation arising from mean-field game problems driven by jump-diffusion processes. For $t\in[0,T]$, denote by $\mathcal{A}_t$ the set of square integrable and $\mathbb{F}$-adapted c{\`a}dl{\`a}g  controls $\alpha:\Omega\times[t,T]\to\mathbb{R}$. Given  $\xi\in L^2(\Omega,\mathscr{F}_t,\mathbb{P};\mathbb{R}^d)$ and $\alpha\in\mathcal{A}_t$, consider the SDE:
\begin{align*}
X^{t,\xi,\alpha}_s=&\,\xi+\int_t^s b(r,X^{t,\xi,\alpha}_r,\mathbb{P}_{X^{t,\xi,\alpha}_r},\alpha_r)dr+\int_t^s\sigma(r,X^{t,\xi,\alpha}_r,\mathbb{P}_{X^{t,\xi,\alpha}_r})dW_r\\
&+\int_t^s\int_Eh(r,X^{t,\xi,\alpha}_{r-},\mathbb{P}_{X^{t,\xi,\alpha}_r},\theta)N(dr,d\theta),
\end{align*}
where $W$ is a $d$-dimension Brownian motion and $N$ is a Poisson random measure independent of $W$. The coefficients $b:[0,T]\times\mathbb{R}^d\times\mathcal{P}_2(\mathbb{R}^d)\times\mathbb{R}\to\mathbb{R}^d$, $\sigma:[0,T]\times\mathbb{R}^d\times\mathcal{P}_2(\mathbb{R}^d)\to\mathbb{R}^{d\times d}$ and $h:[0,T]\times\mathbb{R}^d\times\mathcal{P}_2(\mathbb{R}^d)\times E\to\mathbb{R}^d$ are measurable in all variables. 
Consider the following utility for the mean-field game:
\begin{align*}
J(t,x,\xi;\alpha,\alpha'):=\mathbb{E}\bigg[ g(X_T^{t,\xi,\alpha'},\mathbb{P}_{X_T^{t,\xi,\alpha}})+\int_t^Tf(s,X_s^{t,\xi,\alpha'},\mathbb{P}_{X_s^{t,\xi,\alpha}},\alpha'_s)ds \bigg|X_t^{t,\xi,\alpha'}=x\bigg],
\end{align*}
where $g:\mathbb{R}^d\times\mathcal{P}_2(\mathbb{R}^d)\to\mathbb{R}$ and $f:[0,T]\times\mathbb{R}^d\times\mathcal{P}_2(\mathbb{R}^d)\times\mathbb{R}\to\mathbb{R}$ are measurable in all variables.   
When $\xi\in L^2(\Omega,\mathscr{F}_t,\mathbb{P};\mathbb{R}^d)$, it is clear that $J(t,x,\xi;\alpha,\alpha')$ is deterministic and law invariant. Thus, we can define
\begin{align*}
J(t,x,\mu;\alpha,\alpha'):=J(t,x,\xi;\alpha,\alpha'),\ \ \ \mbox{with}\ \mu=\mathbb{P}_\xi,
\end{align*}
and
\begin{align}\label{eq MFG optimal problem}
V(t,x,\mu;\alpha):=\sup_{\alpha'\in\mathcal{A}_t}J(t,x,\mu;\alpha,\alpha').
\end{align}
\begin{definition}
We say $\alpha^*\in\mathcal{A}_t$ is a mean-field equilibrium (MFE) of \eqref{eq MFG optimal problem} at $(t,\mu)$ if 
\begin{align*}
V(t,x,\mu;\alpha^*)=J(t,x,\mu;\alpha^*,\alpha^*),\ \ \ \mbox{for}\ \mu-a.e.\ \ x\in\mathbb{R}^d.
\end{align*}
\end{definition}
We remark that an MFE depends on $(t,\mu)$, but is valid for $\mu$-almost every $x$. Assuming that there is a unique MFE for each $(t,\mu)$, denoted as $\alpha^*(t,\mu)$, then the value function is
\begin{align*}
V(t,x,\mu):=V(t,x,\mu;\alpha^*(t,\mu)).
\end{align*}
Introduce the Hamiltonian function $H$ as follows
\begin{align*}
H(t,x,\mu,z):=\sup_{a\in\mathbb{R}}[b(t,x,\mu,a)\cdot z+f(t,x,\mu,a)].
\end{align*}
By the dynamic programming principle (Section 4.2 in \cite{carmona2014master}) and the It{\^o} formula for flows of measures on semimartingales (Corollary 3.5 in \cite{guo2023ito}), the value function $V$ is associated with following master equation
\begin{align*}
0=&\,\partial_tV(t,x,\mu)+\frac{1}{2}\partial^2_x V(t,x,\mu):\sigma\sigma^{\intercal}(t,x,\mu)+H(t,x,\mu,\partial_xV(t,x,\mu))\\
&+\int_E(V(t,x+h(t,x,\mu,\theta),\mu)-V(t,x,\mu))\nu(d\theta)\\
&+\int_{\mathbb{R}^d}\bigg[\frac{1}{2}\partial_y\partial_\mu V(t,x,\mu,y):\sigma\sigma^{\intercal}(t,y,\mu)+\partial_\mu V(t,x,\mu,y)\cdot \partial_p H(t,y,\mu,\partial_x V(t,y,\mu))\\
&\qquad+\int_E\Big(\frac{\delta V}{\delta\mu}(t,x,\mu,y+h(t,y,\mu,\theta))-\frac{\delta V}{\delta\mu}(t,x,\mu,y)\Big)\nu(d\theta)\bigg]\mu(dy),
\end{align*}
with $V(T,x,\mu)=g(x,\mu)$. It is well-known that $b(t,x,\mu,\alpha^*(t,\mu))=\partial_p H(t,x,\mu,\partial_x V(t,x,\mu))$ and $H(t,x,\mu,\partial_xV(t,x,\mu))=b(t,x,\mu,\alpha^*(t,\mu))\partial_xV(t,x,\mu)+f(t,x,\mu,\alpha^*(t,\mu))$. Therefore, we can write 
\begin{align*}
H(t,x,\mu,\partial_xV(t,x,\mu))=&\,\tilde{b}(t,x,\mu,\partial_xV(t,x,\mu)\sigma(t,x,\mu))\partial_xV(t,x,\mu)\\&+\tilde{f}(t,x,\mu,\partial
_xV(t,x,\mu)\sigma(t,x,\mu)),
\end{align*}
where
\begin{align*}
\tilde{b}(t,x,\mu,\partial_xV(t,x,\mu)\sigma(t,x,\mu)):=b(t,x,\mu,\alpha^*(t,\mu)),
\end{align*}
and
\begin{align*}
\tilde{f}&(t,x,\mu,\partial
_xV(t,x,\mu)\sigma(t,x,\mu))
:=f(t,x,\mu,\alpha^*(t,\mu)).
\end{align*}
Here, $\partial_pH$ denotes the derivative with respect to $z$. It would therefore be natural to denote it as $\partial_z H$; however, we retain the standard PDE notation $\partial_pH$.\\
In the remainder of this paper, we will mainly focus on the following more general master equation:
\begin{align}\label{eq master introduction game}
&\,\partial_tV(t,x,\mu)+\partial_x V(t,x,\mu)b(t,x,\mu,V(t,x,\mu),\partial_xV(t,x,\mu)\sigma(t,x,\mu,V(t,x,\mu)))\nonumber\\
&+f(t,x,\mu,V(t,x,\mu),\partial_xV(t,x,\mu)\sigma(t,x,\mu,V(t,x,\mu)))
\nonumber\\&+\int_E\, (V(t,x+h(t,x,\mu,V(t,x,\mu),\theta),\mu)-V(t,x,\mu))\nu(d\theta)\nonumber\\
&+\frac{1}{2}\partial_x^2V(t,x,\mu):\sigma\sigma^{\intercal}(t,x,\mu,V(t,x,\mu))\nonumber\\
&+\int_{\mathbb{R}^d}\, \bigg[\partial_\mu V(t,x,\mu,y)b(t,y,\mu,V(t,y,\mu),\partial_xV(t,y,\mu)\sigma(t,y,\mu,V(t,y,\mu)))\nonumber\\
&\qquad+\frac{1}{2}\partial_y\partial_\mu V(t,x,\mu,y):\sigma\sigma^{\intercal}(t,y,\mu,V(t,y,\mu))\nonumber\\
&\qquad+\int_E\Big(\frac{\delta V}{\delta\mu}(t,x,\mu,y+h(t,y,\mu,V(t,y,\mu),\theta))-\frac{\delta V}{\delta\mu}(t,x,\mu,y)\Big)\nu(d\theta)\bigg]\mu(dy)=0,
\end{align}
with $V(T,x,\mu)=g(x,\mu)$, where the coefficients $\sigma$ and $h$ are allowed to involve an extra nonlinear term $V(t,x,\mu)$.

\section{The well-posedness of MV-FBSDEs}
To give a probabilistic interpretation of the master equation, we introduce the following coupled MV-FBSDEs:
\begin{equation}\label{eq MVFBSDE fully 1}
\left\{\begin{split}
X^{t,\xi}_s=&\,\xi+\int_t^sb(r,X_r^{t,\xi},\mu_r^{t,\xi},Y_r^{t,\xi},Z_r^{t,\xi})dr+\int_t^s \sigma(r,X_r^{t,\xi},\mu_r^{t,\xi},Y_r^{t,\xi})dW_r\\
&+\int_t^s\int_Eh(r,X^{t,\xi}_{r-},\mu^{t,\xi}_r,Y^{t,\xi}_{r-},\theta)N(dr,d\theta),\\
Y^{t,\xi}_s=&\,g(X^{t,\xi}_T,\mu_T^{t,\xi})+\int_s^T f(r,X_r^{t,\xi},\mu_r^{t,\xi},Y_r^{t,\xi},Z_r^{t,\xi})dr-\int_s^T Z_r^{t,\xi}dW_r\\
&-\int_t^T\int_E H^{t,\xi}_r(\theta)\tilde{N}(dr,d\theta),
\end{split}\right.
\end{equation}
and 
\begin{equation}\label{eq MVFBSDE fully 2}
\left\{\begin{split}
X_s^{t,x,\mu}=&\,x+\int_t^s b(r,X_r^{t,x,\mu},\mu_r^{t,\xi},Y_r^{t,x,\mu},Z_r^{t,x,\mu})dr+\int_t^s\sigma(r,X_r^{t,x,\mu},\mu_r^{t,\xi},Y_r^{t,x,\mu})dW_r\\
&+\int_t^s\int_E h(r,X^{t,x,\mu}_{r-},\mu^{t,\xi}_r,Y^{t,x,\mu}_{r-},\theta)N(dr,d\theta),\\
Y_s^{t,x,\mu}=&\,g(X^{t,x,\mu}_T,\mu_T^{t,\xi})+\int_s^T f(r,X_r^{t,x,\mu},\mu_r^{t,\xi},Y_r^{t,x,\mu},Z_r^{t,x,\mu})dr-\int_s^TZ_r^{t,x,\mu}dW_r\\
&-\int_t^T\int_E H^{t,x,\mu}_r(\theta)\tilde{N}(dr,d\theta),
\end{split}\right.
\end{equation}
where $\mu_r^{t,\xi}=\mathbb{P}_{X_r^{t,\xi}}$ 
and $\tilde{N}(dr,d\theta):=N(dr,d\theta)-\nu(d\theta)dr$.
The coefficients  $b:[0,T]\times\mathbb{R}^d\times\mathcal{P}_2(\mathbb{R}^d)\times\mathbb{R}\times\mathbb{R}^d\to\mathbb{R}^d$, $\sigma:[0,T]\times\mathbb{R}^d\times\mathcal{P}_2(\mathbb{R}^d)\times\mathbb{R}\to\mathbb{R}^{d\times d}$, 
$h:[0,T]\times\mathbb{R}^d\times\mathcal{P}_2(\mathbb{R}^d)\times\mathbb{R}\times E\to\mathbb{R}^d$, $f:[0,T]\times\mathbb{R}^d\times\mathcal{P}_2(\mathbb{R}^d)\times\mathbb{R}\times\mathbb{R}^d\to\mathbb{R}$, $g:\mathbb{R}^d\times\mathcal{P}_2(\mathbb{R}^d)\to\mathbb{R}$ are measurable in all variables. 

\subsection{Assumptions}
We impose the following assumptions to ensure the well-posedness of the MV-SDE.
\begin{assumption}\label{assumption SDE}
(1) The initial value $X_0\in L^2(\Omega,\mathscr{F}_0,\mathbb{P};\mathbb{R}^d)$. \\
(2) (Lipschitz Continuity) There exists a constant $L>0$ such that for any $t\in[0,T],y\in\mathbb{R},z\in\mathbb{R}^{d}$ and $x_1,x_2\in\mathbb{R}^d,\mu_1,\mu_2\in\mathcal{P}_2(\mathbb{R}^d)$, 
\begin{align*}
|b(t,x_1,\mu_1,y,z)-b(t,x_2,\mu_2,y,z)|&\leq L(|x_1-x_2|+W_2(\mu_1,\mu_2)), \\
\|\sigma(t,x_1,\mu_1,y)-\sigma(t,x_2,\mu_2,y)\|&\leq L(|x_1-x_2|+W_2(\mu_1,\mu_2)).
\end{align*}
Moreover, there exists a measurable function $L(\theta)>0$ on $E$ such that
\begin{align*}
|h(t,x_1,\mu_1,y,\theta)-h(t,x_2,\mu_2,y,\theta)|&\leq L(\theta)(|x_1-x_2|+W_2(\mu_1,\mu_2)),
\end{align*}
and for some constant $\gamma>0$,
\begin{align}\label{eq Ltheta}
\int_E\,(e^{\gamma L(\theta)}-1)\nu(d\theta)\leq L.
\end{align}
(3) (Boundedness) There is a constant $M>0$, such that for any $t\in[0,T],y\in\mathbb{R},z\in\mathbb{R}^{d}$,
\begin{align*}
|b(t,0,\delta_0,y,z)|+\|\sigma(t,0,\delta_0,y)\|+\int_E\,(e^{\gamma|h(t,0,\delta_0,y,\theta)|}-1)\nu(d\theta)\leq M,
\end{align*}
where $\delta_0$ denotes the Dirac measure at $0$.
\end{assumption}
This assumption differs from that in \cite{li2018mean} by relaxing the requirement that the growth and Lipschitz conditions on $h$ be uniform in $\theta$. This is made possible by our framework's use of a finite measure $\nu$, which allows the $\theta$-dependence to be controlled via an integrated condition like (\ref{eq Ltheta}). \\
The following lemma concerns the well-posedness of MV-SDEs.
\begin{lemma}\label{lemma SDE wellposed}
Let Assumption \ref{assumption SDE} hold. For any $(Y,Z)\in\mathcal{S}^2_{\mathbb{F}}(0,T;\mathbb{R})\times\mathcal{M}^2_\mathbb{F}(0,T;\mathbb{R}^d)$, the following MV-SDE
\begin{align*}
X_t=&\,X_0+\int_0^tb(s,X_s,\mu_s,Y_s,Z_s)ds+\int_0^t \sigma(s,X_s,\mu_s,Y_s)dW_s\\
&+\int_0^t\int_Eh(s,X_{s-},\mu_s,Y_{s-},\theta)N(ds,d\theta)
\end{align*}
has a unique solution $X\in\mathcal{S}^2_\mathbb{F}(0,T;\mathbb{R}^d)$, where $\mu_t=\mathbb{P}_{X_t}$ for any $t$.
\end{lemma}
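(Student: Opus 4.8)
The plan is to treat \eqref{eq MVFBSDE fully 1}'s forward component here as a McKean--Vlasov SDE with a finite-activity jump term and to solve it by a fixed-point argument over the flow of marginal laws, freezing the measure slot so that at each step we face only a classical (non-McKean--Vlasov) SDE. Since $(Y,Z)$ is held fixed throughout, I first absorb it into the coefficients, writing $\bar b(s,\omega,x,\mu):=b(s,x,\mu,Y_s(\omega),Z_s(\omega))$, $\bar\sigma(s,\omega,x,\mu):=\sigma(s,x,\mu,Y_s(\omega))$ and $\bar h(s,\omega,x,\mu,\theta):=h(s,x,\mu,Y_{s-}(\omega),\theta)$. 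Because $Y\in\mathcal{S}^2_\mathbb{F}$ and $Z\in\mathcal{M}^2_\mathbb{F}$, these are $\mathbb{F}$-progressively measurable; by Assumption \ref{assumption SDE}(2) they stay Lipschitz in $(x,\mu)$ with the same constants $L$ and $L(\theta)$ uniformly in $\omega$, and by Assumption \ref{assumption SDE}(3) their base points $\bar b(s,0,\delta_0)$, $\bar\sigma(s,0,\delta_0)$ are bounded by $M$ uniformly in $(y,z)$. A key preliminary observation is that the exponential integrability \eqref{eq Ltheta} already encodes the polynomial $\nu$-moments required for the jump estimates: from the elementary inequalities $\gamma a\le e^{\gamma a}-1$ and $\tfrac12\gamma^2a^2\le e^{\gamma a}-1$ for $a\ge0$, one obtains $\int_E L(\theta)\,\nu(d\theta)\le L/\gamma$ and $\int_E L(\theta)^2\,\nu(d\theta)\le 2L/\gamma^2$, and likewise $\int_E|\bar h(s,0,\delta_0,\theta)|^2\nu(d\theta)<\infty$ from Assumption \ref{assumption SDE}(3). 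These finite moments are exactly what the Poisson stochastic integrals require.

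Next I set up the fixed point. Fix a càdlàg measure flow $m=(m_s)_{s\in[0,T]}$ with $\sup_s\|m_s\|_2<\infty$ and plug it into the measure argument. The resulting equation is a standard Lipschitz SDE with a finite-activity jump term, for which classical theory (Picard iteration and Banach's fixed-point theorem in $\mathcal{S}^2_\mathbb{F}$) yields a unique solution $X^m\in\mathcal{S}^2_\mathbb{F}(0,T;\mathbb{R}^d)$; here $m$ enters only as a bounded, time-dependent inhomogeneity. An a priori estimate, obtained by applying It\^o's formula to $|X^m_t|^2$, rewriting $N=\tilde N+\nu\,ds$, and combining Burkholder--Davis--Gundy with the base-point bound $M$, the finite $\nu$-moments above and $\mathbb{E}|X_0|^2<\infty$, gives $\mathbb{E}\sup_{t\le T}|X^m_t|^2<\infty$ by Gronwall's lemma. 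In particular $t\mapsto\mathbb{P}_{X^m_t}$ is a well-defined $\mathcal{P}_2$-valued flow, so the map $\Phi(m)_t:=\mathbb{P}_{X^m_t}$ sends the space of such flows into itself.

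The heart of the argument is showing $\Phi$ is a contraction. Given two flows $m^1,m^2$, I drive both equations by the same $X_0$, $W$ and $N$ and estimate $\delta X:=X^{m^1}-X^{m^2}$. Applying It\^o to $|\delta X_t|^2$ and splitting $N=\tilde N+\nu\,ds$, I control the drift by Cauchy--Schwarz, the Brownian part by the It\^o isometry and BDG, the compensated-jump martingale by BDG together with $|\bar h(s,x_1,\mu_1,\theta)-\bar h(s,x_2,\mu_2,\theta)|\le L(\theta)(|x_1-x_2|+W_2(\mu_1,\mu_2))$ and $\int_E L(\theta)^2\nu(d\theta)<\infty$, and the residual $ds$-drift coming from the compensator by $\int_E L(\theta)\nu(d\theta)<\infty$. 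Collecting terms gives, for some $C=C(L,\gamma,T)$,
\[
\mathbb{E}\sup_{s\le t}|\delta X_s|^2\le C\int_0^t\Big(\mathbb{E}|\delta X_s|^2+W_2(m^1_s,m^2_s)^2\Big)\,ds .
\]
Gronwall's lemma together with $W_2(\mathbb{P}_{X^{m^1}_t},\mathbb{P}_{X^{m^2}_t})^2\le\mathbb{E}|\delta X_t|^2$ yields $\sup_{t\le\delta}W_2(\Phi(m^1)_t,\Phi(m^2)_t)^2\le C\delta\,e^{C\delta}\sup_{t\le\delta}W_2(m^1_t,m^2_t)^2$, a contraction for $\delta$ small. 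Banach's fixed-point theorem then gives a unique fixed flow on $[0,\delta]$, i.e. a unique solution of the MV-SDE there; since $C$ is time-homogeneous, one restarts from $X_\delta$ and patches finitely many intervals to cover $[0,T]$ (alternatively, a Bielecki-weighted norm $\mathbb{E}\sup_t e^{-\beta t}|\cdot|^2$ with large $\beta$ delivers a global contraction in one step).

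The main obstacle I anticipate is the careful treatment of the uncompensated, $\theta$-dependent jump term in the uniform-in-time $\mathcal{S}^2$ bounds: one must decompose $N$ into its compensator and compensated martingale, apply the jump BDG inequality to the latter, and verify that the exponential condition \eqref{eq Ltheta}---rather than a uniform-in-$\theta$ bound as in \cite{hao2016mean}---indeed supplies the finite moments $\int_E L(\theta)^p\nu(d\theta)<\infty$ on which every such estimate rests. The remaining steps (classical solvability for frozen $m$, the Gronwall closure, and the patching) are routine once these jump estimates are secured.
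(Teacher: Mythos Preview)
Your proposal is correct. The paper itself omits the proof, saying only that it ``follows a standard Picard iteration argument, analogous to that for the classical jump-diffusion SDEs (see Theorem 6.2.3 in \cite{applebaum2009levy})'', so there is essentially nothing to compare against beyond that pointer. What the paper presumably has in mind is a direct Picard scheme on the process $X$ (simultaneously iterating the law), whereas you take the Sznitman-style two-step route: freeze the measure flow, solve the resulting classical Lipschitz jump SDE, and then contract on the flow via $\Phi(m)=\mathbb{P}_{X^m}$. Both are standard and equivalent for this class of equations; your variant has the minor advantage of cleanly separating the classical SDE well-posedness from the McKean--Vlasov fixed point. Your explicit extraction of the polynomial $\nu$-moments $\int_E L(\theta)\,\nu(d\theta)<\infty$ and $\int_E L(\theta)^2\,\nu(d\theta)<\infty$ from the exponential integrability \eqref{eq Ltheta} is exactly the ingredient that makes the uncompensated-jump estimates go through under the paper's assumptions, and is a useful detail the paper leaves implicit.
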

The proof of this lemma follows a standard Picard iteration argument, analogous to that for the classical jump-diffusion SDEs (see Theorem 6.2.3 in \cite{applebaum2009levy}). We therefore omit the details and proceed to study the well-posedness of the following MV-FBSDE system, which is equivalent to \eqref{eq MVFBSDE fully 1},
\begin{equation}\label{eq MV-FBSDE}
\left\{\begin{split}
X_t=&\,X_0+\int_0^tb(r,X_r,\mu_r,Y_r,Z_r)dr+\int_0^t \sigma(r,X_r,\mu_r,Y_r)dW_r\\
&\qquad +\int_0^t\int_Eh(r,X_{r-},\mu_r,Y_{r-},\theta)N(dr,d\theta),\\
Y_t=&\,g(X_T,\mu_T)+\int_t^Tf(r,X_r,\mu_r,Y_r,Z_r)dr-\int_t^TZ_rdW_r-\int_t^T\int_EH_r(\theta)\tilde{N}(dr,d\theta).
\end{split}\right.
\end{equation}
For a solution quadruple $\Theta:=(X,Y,Z,H)$ to \eqref{eq MV-FBSDE}, we define its norm as
\begin{align*}
\|\Theta\|:=\mathbb{E}\bigg[\sup_{t\in[0,T]}|X_t|^2+\sup_{t\in[0,T]}|Y_t|^2+\int_0^T |Z_t|^2dt+\int_0^T\int_E |H_t(\theta)|^2\nu(d\theta)dt\bigg].
\end{align*}
To establish the well-posedness of the MV-FBSDE \eqref{eq MV-FBSDE}, we make the following assumptions.
\begin{assumption}\label{assumption MVFBSDE f}
$f$ and $g$ are uniformly Lipschitz continuous in $(x,\mu)$ and are bounded, i.e. for any $t\in[0,T],y\in\mathbb{R},z\in\mathbb{R}^{d}$, $x_1,x_2\in\mathbb{R}^d$ and $\mu_1,\mu_2\in\mathcal{P}_2(\mathbb{R}^d)$, there exist two positive constants $L$ and $M$ such that,
\begin{align*}
|f(t,x_1,\mu_1,y,z)-f(t,x_2,\mu_2,y,z)|&\leq L(|x_1-x_2|+W_2(\mu_1,\mu_2)),\\
|g(x_1,\mu_1)-g(x_2,\mu_2)|&\leq L(|x_1-x_2|+ W_2(\mu_1,\mu_2))
\end{align*}
and 
\begin{align*}
|f(t,0,\delta_0,y,z)|+|g(0,\delta_0)|\leq M.
\end{align*}
\end{assumption}
\begin{assumption}\label{assumption y lip}
The coefficients $\Sigma:=b,\sigma,f$ are uniformly Lipschitz continuous in $(y,z)$, i.e. for any $t\in[0,T]$, $x\in\mathbb{R}^d$, $\mu\in\mathcal{P}_2(\mathbb{R}^d)$, $y_1,y_2\in\mathbb{R}$ and $z_1,z_2\in\mathbb{R}^d$, there is a constant $L>0$ such that
\begin{align*}
|\Sigma(t,x,\mu,y_1,z_1)-\Sigma(t,x,\mu,y_2,z_2)|\leq L(|y_1-y_2|+|z_1-z_2|).
\end{align*}
$h$ is locally Lipschitz continuous in $y$, i.e. for any $t\in[0,T]$, $x\in\mathbb{R}^d$, $\mu\in\mathcal{P}_2(\mathbb{R}^d)$, $\theta\in E$ and $y_1,y_2\in\mathbb{R}$, there exists a measurable function $L(\theta)>0$ on $E$ such that
\begin{align*}
|h(t,x,\mu,y_1,\theta)-h(t,x,\mu,y_2,\theta)|\leq L(\theta)|y_1-y_2|,
\end{align*}
where $L(\theta)$ satisfies the integrability condition \eqref{eq Ltheta}.
\end{assumption}

\subsection{The well-posedness of MV-FBSDEs on small time duration}

\begin{theorem}\label{thm FBSDE small time duration}
Let Assumptions \ref{assumption SDE}-\ref{assumption y lip} hold. Then there exists a constant $\delta(L)>0$, depending only on the constants in the assumptions, such that for any $T\leq\delta(L)$, the MV-FBSDE \eqref{eq MV-FBSDE} has a unique solution. Moreover, the following estimate holds 
\begin{align}\label{eq theta estimate}
\|\Theta\|\leq C(\mathbb{E}|X_0|^2+I^2_0),
\end{align}
where $C$ is a constant depending on $L$ and $T$, and 
\begin{align*}
I^2_0:=&\int_0^T \Big(|b(t,0,\delta_0,0,0)|^2+\|\sigma(t,0,\delta_0,0)\|^2+|f(t,0,\delta_0,0,0,0)|^2\\&\qquad\quad+\int_E|h(t,0,\delta_0,0,\theta)|^2d\nu(\theta)\Big)dt+|g(0,\delta_0)|^2.
\end{align*}
\end{theorem}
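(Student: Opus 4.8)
The plan is to realize the solution of \eqref{eq MV-FBSDE} as the unique fixed point of a decoupling map and to close the argument by the Banach fixed point theorem on a short horizon. Work on the Banach space $\mathcal{H}:=\mathcal{S}^2_\mathbb{F}(0,T;\mathbb{R}^d)\times\mathcal{S}^2_\mathbb{F}(0,T;\mathbb{R})\times\mathcal{M}^2_\mathbb{F}(0,T;\mathbb{R}^d)\times\mathcal{K}^2_\nu(0,T;\mathbb{R})$ endowed with the norm $\|\cdot\|$. Given an input $\Theta'=(X',Y',Z',H')\in\mathcal{H}$, I would define $\Phi(\Theta')=(X,Y,Z,H)$ in two steps. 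First, substitute the \emph{given} processes $(Y'_r,Z'_r)$ into the coefficients of the forward equation and apply Lemma \ref{lemma SDE wellposed} to obtain the unique forward state $X\in\mathcal{S}^2_\mathbb{F}$ together with its self-consistent flow $\mu_r=\mathbb{P}_{X_r}$; since $\nu$ is finite, $N$ is finite-activity and the uncompensated jump integral is well defined pathwise. Second, with $X$ and $\mu$ now frozen, solve the resulting BSDE with jumps (Lipschitz generator in $(y,z)$ by Assumption \ref{assumption y lip}, square-integrable terminal by Assumption \ref{assumption MVFBSDE f}) by standard BSDE-with-jump theory to get $(Y,Z,H)$. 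By the uniqueness in each of the two steps, a quadruple solves \eqref{eq MV-FBSDE} if and only if it is a fixed point of $\Phi$, so existence and uniqueness for \eqref{eq MV-FBSDE} reduce to those for $\Phi$.

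Next I would establish a self-mapping/a priori bound. Standard $\mathcal{S}^2$ estimates for the forward SDE (Burkholder–Davis–Gundy for the Brownian and compensated-jump parts after writing $N=\tilde{N}+\nu\,dr$, followed by Gronwall and the identity $W_2(\mu_r,\delta_0)^2=\mathbb{E}|X_r|^2$), combined with the usual $\mathcal{S}^2\times\mathcal{M}^2\times\mathcal{K}^2$ estimates for the BSDE, together with the boundedness parts of Assumptions \ref{assumption SDE}--\ref{assumption MVFBSDE f}, yield an inequality of the form $\|\Phi(\Theta')\|\le C(\mathbb{E}|X_0|^2+I_0^2)+CT\|\Theta'\|$. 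Consequently, once $CT<1$, the map $\Phi$ sends the ball of radius $R=C(\mathbb{E}|X_0|^2+I_0^2)/(1-CT)$ into itself, and evaluating this bound at the fixed point gives the estimate \eqref{eq theta estimate}. All jump contributions are controlled using only the integrated condition \eqref{eq Ltheta}: since $e^{\gamma L(\theta)}-1\ge \tfrac12\gamma^2 L(\theta)^2$, one gets $\int_E L(\theta)^2\nu(d\theta)\le 2L/\gamma^2<\infty$, and the boundedness assumption bounds $\int_E|h(t,0,\delta_0,y,\theta)|^2\nu(d\theta)$ in the same way, so no uniform-in-$\theta$ Lipschitz bound is required.

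For the contraction I would take two inputs $\Theta'_1,\Theta'_2$, set $\Theta_i=\Phi(\Theta'_i)$, and estimate the forward difference first. Crucially, the dependence on $(Y',Z')$ enters only through the time integral and stochastic integrals of the forward SDE, so by Cauchy–Schwarz and the It\^o isometry it is multiplied by a factor of $T$ (for the $Z'$-contribution through $b$ and, with an extra power of $T$, for the $Y'$-contributions through $b,\sigma,h$), while the $X$- and $W_2(\mu^1,\mu^2)$-contributions are absorbed by Gronwall via $W_2(\mu^1_r,\mu^2_r)^2\le\mathbb{E}|X^1_r-X^2_r|^2$. This gives $\mathbb{E}\sup_{t\le T}|X^1_t-X^2_t|^2\le CT\,\|\Theta'_1-\Theta'_2\|^2$. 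The BSDE stability estimate then bounds $\|Y_1-Y_2\|_{\mathcal{S}^2}^2+\|Z_1-Z_2\|_{\mathcal{M}^2}^2+\|H_1-H_2\|_{\mathcal{K}^2}^2$ by $C\,\mathbb{E}\sup_{t\le T}|X^1_t-X^2_t|^2$, since the terminal $g(X_T,\mu_T)$ and the driver $f$ are Lipschitz in $(x,\mu)$. Composing the two estimates gives $\|\Phi(\Theta'_1)-\Phi(\Theta'_2)\|^2\le CT\,\|\Theta'_1-\Theta'_2\|^2$, so choosing $\delta(L)$ with $C\delta(L)<1$ makes $\Phi$ a contraction for every $T\le\delta(L)$, and Banach's theorem delivers the unique fixed point, hence the unique solution.

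I expect the main obstacle to be producing a contraction constant that genuinely vanishes as $T\to0$ in spite of the forward–backward coupling. The terminal datum $g(X_T,\mu_T)$ of the backward equation carries \emph{no} explicit factor of $T$, so the required smallness cannot come from the backward part directly; it must be generated entirely by the forward equation's dependence on $(Y',Z')$ through its integral terms and then transported to the backward estimate through $\mathbb{E}\sup_t|X^1_t-X^2_t|^2$. Carrying this out while simultaneously keeping every jump term under control using only \eqref{eq Ltheta}, and correctly absorbing the measure-dependence $W_2(\mu^1,\mu^2)$ into the Gronwall step, is the delicate bookkeeping at the heart of the proof.
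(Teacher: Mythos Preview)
Your proposal is correct and follows essentially the same contraction-mapping strategy as the paper: solve the forward equation with a given input, solve the backward equation, and show the resulting map contracts because the dependence of the forward SDE on $(Y',Z')$ enters only through time integrals and hence picks up a small factor on short horizons. The only cosmetic differences are that the paper iterates on the pair $(y,z)$ rather than the full quadruple (your $X',H'$ are never actually used), obtains the forward and backward estimates by applying It\^o's formula to $|\Delta X_t|^2$ and $|\Delta Y_t|^2$ with an auxiliary Young-inequality parameter $\varepsilon$ instead of BDG\,+\,Gronwall, and derives \eqref{eq theta estimate} by comparing the fixed point with the output $F(0,0)$ rather than by your direct self-mapping bound.
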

\begin{proof}
For $(y,z)\in \mathcal{S}^2_\mathbb{F}(0,T;\mathbb{R}) \times \mathcal{M}^2_\mathbb{F}(0,T;\mathbb{R}^d)$, define a map $F$ by $F(y,z):=(Y^{y,z},Z^{y,z})$, where $\Theta^{y,z}:=(X^{y,z},Y^{y,z},Z^{y,z},H^{y,z})$ is the unique solution to the following decoupled FBSDE:
\begin{equation}\label{eq MVFBSDE decoupled}
\left\{\begin{split}
X_t^{y,z}=&\,X_0+\int_0^tb(r,X_r^{y,z},\mu^{y,z}_r,y_r,z_r)dr+\int_0^t \sigma(r,X^{y,z}_r,\mu^{y,z}_r,y_r)dW_r\\
&+\int_0^t\int_Eh(r,X^{y,z}_{r-},\mu_r^{y,z},y_{r-},\theta)N(dr,d\theta),\\
Y^{y,z}_t=&\,g(X^{y,z}_T,\mu^{y,z}_T)+\int_t^T f(r,X_r^{y,z},\mu^{y,z}_r,Y^{y,z}_r,Z^{y,z}_r)dr-\int_0^tZ^{y,z}_rdW_r\\
&-\int_0^t\int_E H^{y,z}_r(\theta)\tilde{N}(dr,d\theta),
\end{split}\right.
\end{equation}
where $\mu^{y,z}_t:=\mathbb{P}_{X^{y,z}_t}$. The well-posedness of the above system is clear: we first solve the SDE by Lemma \ref{lemma SDE wellposed}, then obtain $(Y^{y,z}, Z^{y,z}, H^{y,z})$ via the BSDE theory in \cite{li2018mean}. We'll show that $F$ is a contraction mapping under the norm:
\begin{align*}
\|(y,z)\|:=\mathbb{E}\bigg[\sup_{t\in[0,T]}|y_t|^2+\int_0^T |z_t|^2dt\bigg].
\end{align*}
Let $(y^i, z^i)\in \mathcal{S}^2_\mathbb{F}(0,T;\mathbb{R}) \times \mathcal{M}^2_\mathbb{F}(0,T;\mathbb{R}^d)$ for $i=1,2$, and let $\Theta^i := (X^i, Y^i, Z^i, H^i)$ be the corresponding solutions of \eqref{eq MVFBSDE decoupled}. Define $\Delta Y_t:=Y^1_t-Y^2_t,\Delta
 Z_t:=Z^1_t-Z^2_t$, $\Delta X_t:=X^1_t-X^2_t,\Delta \mu_t:=W_2(\mu_t^1,\mu_t^2)=W_2(\mathbb{P}_{X^1_t},\mathbb{P}_{X^2_t}),\Delta H_t:=H^1_t-H^2_t$, for all $t\in[0,T]$. Then
\begin{align*}
\Delta Y_t=&\,g(X_T^1,\mu^1_T)-g(X_T^2,\mu^2_T)+\int_t^T \big[f(r,X^1_r,\mu^1_r,Y^1_r,Z^1_r)-f(r,X^2_r,\mu^2_r,Y^2_r,Z^2_r)\big]dr\\
&-\int_t^T \Delta Z_rdW_r-\int_t^T\int_E\Delta H_r(\theta)\tilde{N}(dr,d\theta)\\
=&\,\lambda^1\Delta X_T+\lambda^2\Delta\mu_T+\int_t^T\big[\alpha^3_r\Delta X_r+\beta^3_r\Delta\mu_r+\gamma^3_r\Delta Y_r+\rho^3_r \Delta Z_r\big]dr\\
&-\int_t^T \Delta Z_rdW_r-\int_t^T\int_E\Delta H_r(\theta)\tilde{N}(dr,d\theta),
\end{align*}
where 
\begin{align*}
\lambda^1:=&\frac{g(X_T^1,\mu^1_T)-g(X^2_T,\mu^1_T)}{X^1_T-X^2_T},\ \ \lambda^2:=\frac{g(X_T^2,\mu^1_T)-g(X^2_T,\mu^2_T)}{W_2(\mu_T^1,\mu^2_T)},\\
\alpha^3_r:=&\frac{f(r,X^1_r,\mu^1_r,y^1_r,z^1_r)-f(r,X^2_r,\mu^1_r,Y^1_r,Z^1_r)}{X^1_r-X^2_r},\\
\beta^3_r:=&\frac{f(r,X^2_r,\mu^2_r,Y^1_r,Z^1_r)-f(r,X^2_r,\mu^1_r,Y^1_r,Z^1_r)}{W_2(\mu_r^1,\mu^2_r)},\\
\gamma^3_r:=&\frac{f(r,X^2_r,\mu^2_r,Y^2_r,Z^1_r)-f(r,X^2_r,\mu^2_r,Y^1_r,Z^1_r)}{Y^1_r-Y^2_r},\\
\rho^3_r:=&\frac{f(r,X^2_r,\mu^2_r,Y^2_r,Z^2_r)-f(r,X^2_r,\mu^2_r,Y^2_r,Z^1_r)}{Z^1_r-Z^2_r},
\end{align*}
and by Assumptions \ref{assumption MVFBSDE f} and \ref{assumption y lip}, we know that all the above coefficients are bounded by $L$.\\
With the similar notations, we have
\begin{align*}
\Delta X_t=&\,\int_0^t\big[\alpha_r^1\Delta X_r+\beta_r^1\Delta\mu_r+\gamma^1_r\Delta y_r+\rho^1_r\Delta z_r\big]dr+\int_0^t\big[\alpha_r^2\Delta X_r+\beta_r^2\Delta\mu_r+\gamma^2_r\Delta y_r\big]dW_r\\
&+\int_0^t\int_E\big[\alpha_r^4(\theta)\Delta X_{r-}+\beta_r^4(\theta)\Delta \mu_r+\gamma^4_r(\theta)\Delta y_{r-}\big]N(dr,d\theta).
\end{align*}
Then, noting that 
\begin{align*}
\Delta \mu_t=W_2(\mu_t^1,\mu_t^2)\leq \left[\mathbb{E}|X^1_t-X^2_t|^2\right]^{\frac{1}{2}},
\end{align*}
and applying It\^o's formula for jump-diffusion processes (see Lemma 4.4.6 in \cite{applebaum2009levy}), we obtain
\begin{align}\label{eq small time x}
&\mathbb{E}|\Delta X_t|^2\nonumber\\
=&\,\mathbb{E}\bigg[\int_0^t2\Delta X_r(\alpha_r^1\Delta X_r+\beta_r^1\Delta\mu_r+\gamma^1_r\Delta y_r+\rho^1_r\Delta z_r)dr+\int_0^t(\alpha_r^2\Delta X_r+\beta_r^2\Delta\mu_r+\gamma^2_r\Delta y_r)^2dr\nonumber\\
&\qquad+\int_0^t\int_E\big[(\Delta X_{r-}+\alpha_r^4(\theta)\Delta X_{r-}+\beta_r^4(\theta)\Delta \mu_r+\gamma^4_r(\theta)\Delta y_{r-})^2-(\Delta X_{r-})^2\big]N(dr,d\theta)\bigg]\nonumber\\
\leq & \,\delta(L)\Big(C+\frac{4\|\gamma^1\|^2_{\infty}+4\|\gamma^4\|^2_{\infty}+4\|\rho^1\|^2_\infty}{ \eps}\Big)\sup_{t\in[0,T]}\mathbb{E}|\Delta X_t|^2\nonumber\\&+\big(3 \eps+\delta(L)\|\gamma^2\|_{\infty}^2+\delta(L)\|\gamma^4\|_{\infty}^2\big)\|(\Delta y,\Delta z)\|^2,
\end{align}
where $\|\gamma\|_{\infty}:=\mathbb{E}\Big[\sup_{t\in[0,T]}\int_E|\gamma_t(\theta)|^2\nu(d\theta)\Big]^{\frac{1}{2}}\leq L$.
Still applying It\^o's formula to $|\Delta Y_t|^2$ yields
\begin{equation}\label{eq small time y}\begin{split}
&\mathbb{E}|\Delta Y_t|^2+\mathbb{E}\int_t^T|\Delta Z_r|^2dr+\mathbb{E}\int_t^T\int_E|\Delta H_r(\theta)|^2\nu(d\theta)dr\\
=&\,\mathbb{E}\bigg[(\lambda^1\Delta X_T+\lambda^2\Delta\mu_T)^2+\int_t^T2\Delta Y_r(\alpha^3_r\Delta X_r+\beta^3_r\Delta\mu_r+\gamma^3_r\Delta Y_r+\rho^3_r\Delta Z_r) dr\bigg]\\
\leq & \,(|\lambda^1|+|\lambda^2|+C\delta(L))\sup_{t\in[0,T]}\mathbb{E}|\Delta X_t|^2+\mathbb{E}\int_t^TC\eps^{-1}|\Delta Y_r|^2+\eps|\Delta Z_r|^2\,dr\\
\leq & \,(|\lambda^1|+|\lambda^2|+C\delta(L))\sup_{t\in[0,T]}\mathbb{E}|\Delta X_t|^2+(C\eps^{-1}\delta(L)+\eps)\|(\Delta Y,\Delta Z)\|^2.
\end{split}\end{equation}
Combining \eqref{eq small time x} and \eqref{eq small time y} provides that
\begin{align*}
&\|(\Delta Y,\Delta Z)\|\\\leq&\, \bigg[\frac{|\lambda^1|+|\lambda^2|+C\delta(L)}{1-C\eps^{-1}\delta(L)-\eps}\bigg]\bigg[\frac{3\eps+\delta(L)\|\gamma^2\|_{\infty}^2+\delta(L)\|\gamma^4\|_{\infty}^2}{1-\delta(L)(C+\frac{4\|\gamma^1\|^2_{\infty}+4\|\gamma^4\|^2_{\infty}+\|\rho^1\|^2_\infty}{\eps})}C(1+\delta(L))\bigg]\|(\Delta y,\Delta z)\|.
\end{align*}
By Assumption \ref{assumption y lip}, the quantities $\gamma^i$ are bounded, and the constant $C$ depends only on the Lipschitz constants. Hence, we can choose $\varepsilon$ and $\delta(L)$ small enough such that
\begin{align*}
\|(\Delta Y,\Delta Z)\|\leq \theta'\|(\Delta y,\Delta z)\|,\ \ \ \mbox{with}\ \theta'<1.
\end{align*}
This proves that $F$ is a contraction mapping.
Let $(Y,Z)$ be the fixed point of $F$ (i.e., $F(Y,Z) = (Y,Z)$), then by the definition of $F$, we have
\begin{equation*}
\left\{\begin{split}
    &X_t^{Y,Z}=X_0+\int_0^t b(r,X_r^{Y,Z},\mu^{Y,Z}_r,Y_r,Z_r)dr+\int_0^t \sigma(r,X^{Y,Z}_r,\mu^{Y,Z}_r,Y_r)dW_r\\
    &\qquad\qquad+\int_0^t\int_E h(r,X_{r-}^{Y,Z},\mu_r^{Y,Z},Y_{r-},\theta)N(dr,d\theta),\\
    &Y_t=g(X^{Y,Z}_T,\mu^{Y,Z}_T)+\int_t^T f(r,X^{Y,Z}_r,\mu^{Y,Z}_r,Y_r,Z_r)dr-\int_t^TZ_rdW_r\\
    &\qquad\qquad-\int_t^T\int_E H_r^{Y,Z}(\theta)\tilde{N}(dr,d\theta).
\end{split}\right.
\end{equation*}
Hence, the quadruple $(X^{Y,Z},Y,Z,H^{Y,Z})$ solves the MV-FBSDE system \eqref{eq MV-FBSDE}.\\
Now we prove the estimate \eqref{eq theta estimate}. Let $(X^0,Y^0,Z^0,H^0)$ be the solution to the following FBSDE
\begin{equation*}
\left\{\begin{split}
&X^0_t=X_0+\int_0^tb(r,X_r^0,\mu^0_r,0,0)dr+\int_0^t \sigma(r,X^0_r,\mu^0_r,0)dW_r\\
&\qquad\qquad+\int_0^t\int_Eh(r,X_{r-}^0,\mu_r^0,0,\theta)N(dr,d\theta),\\
&Y^0_t=g(X^0_T,\mu^0_T)+\int_t^Tf(r,X^0_r,\mu^0_r,Y^0_r,Z^0_r)dr-\int_t^T\, Z^0_rdW_r-\int_t^T\int_E H^0_r(\theta)\tilde{N}(dr,d\theta).
\end{split}\right.
\end{equation*}
By the previous proof, we have 
\begin{align*}
\|(Y,Z)\|-\|(Y^0,Z^0)\|\leq \|(Y-Y^0,Z-Z^0)\|\leq \theta'\|(Y,Z)\|.
\end{align*}
Hence $\|(Y,Z)\|\leq \frac{1}{1-\theta'}\|(Y^0,Z^0)\|$. It is straightforward to show $\|(Y^0,Z^0)\|\leq CI_0^2$, which in turn provides the estimate for $\|(Y,Z)\|$. One can then apply the a priori estimates for MV-SDEs to obtain the estimate for $X$ in \eqref{eq theta estimate} immediately. Finally, to obtain the estimate for $\mathbb{E}\int_0^T\int_E|H_t(\theta)|^2\nu(d\theta)dt$, we apply It{\^o}'s formula to $|Y_t|^2$ and derive the conclusion by using standard arguments (see, e.g., in \cite{barles1997backward}).
\end{proof}
The next proposition establishes the continuity of the solution to MV-FBSDE \eqref{eq MV-FBSDE} with respect to the initial distribution.
\begin{proposition}\label{proposition continuity}
Let Assumptions \ref{assumption SDE}-\ref{assumption y lip} hold and $T\leq \delta(L)$. For any $X^1_0,X^2_0\in L^2(\Omega,\mathscr{F}_0,\mathbb{P};\mathbb{R}^d)$, 
let $(X^1,Y^1,Z^1,H^1)$ and $(X^2,Y^2,Z^2,H^2)$ be the solutions of MV-FBSDE \eqref{eq MV-FBSDE} with initial values $X^1_0$ and $X^2_0$, respectively. Then we have
\begin{align*}
\|(Y^1-Y^2,Z^1-Z^2,H^1-H^2)\|\leq C\left[\mathbb{E}|X^1_0-X^2_0|^2\right]^{\frac{1}{2}},
\end{align*}
where $C$ is a constant depending on $L$ and $T$.
\end{proposition}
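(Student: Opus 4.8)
The plan is to follow the same strategy as in the proof of Theorem \ref{thm FBSDE small time duration}, except that now both quadruples $(X^i,Y^i,Z^i,H^i)$ solve the \emph{fully coupled} system \eqref{eq MV-FBSDE} rather than the decoupled one, and the difference is driven by the initial datum $\Delta X_0:=X^1_0-X^2_0$ instead of by an external input. Writing $\Delta X=X^1-X^2$, $\Delta Y=Y^1-Y^2$, $\Delta Z=Z^1-Z^2$, $\Delta H=H^1-H^2$ and $\Delta\mu_t=W_2(\mu^1_t,\mu^2_t)$, I would first linearize each coefficient difference exactly as in the theorem, e.g. $b(r,X^1_r,\mu^1_r,Y^1_r,Z^1_r)-b(r,X^2_r,\mu^2_r,Y^2_r,Z^2_r)=\alpha^1_r\Delta X_r+\beta^1_r\Delta\mu_r+\gamma^1_r\Delta Y_r+\rho^1_r\Delta Z_r$, and similarly for $\sigma,h,f,g$, where every slope coefficient is bounded by $L$ (by $L(\theta)$ for the jump terms) thanks to Assumptions \ref{assumption MVFBSDE f}--\ref{assumption y lip}. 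The difference quadruple then solves a \emph{linear} MV-FBSDE whose forward part carries the extra term $\Delta X_0$ and whose only inhomogeneity is precisely this initial datum.

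Next I would run the two It\^o computations. Applying It\^o's formula for jump-diffusions to $|\Delta X_t|^2$ (using $\Delta\mu_t\leq[\mathbb{E}|\Delta X_t|^2]^{1/2}$ and the finite-intensity bounds on the jump slopes via \eqref{eq Ltheta}) gives, after absorbing the self-referential $\sup_t\mathbb{E}|\Delta X_t|^2$ term for $T\leq\delta(L)$ small,
\begin{align*}
\sup_{t\in[0,T]}\mathbb{E}|\Delta X_t|^2\leq C\,\mathbb{E}|\Delta X_0|^2+\eta_X\|(\Delta Y,\Delta Z)\|^2,
\end{align*}
where $\eta_X$ can be made small by choosing $\eps$ and $\delta(L)$ small, exactly as in \eqref{eq small time x}. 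Applying It\^o to $|\Delta Y_t|^2$ yields, as in \eqref{eq small time y},
\begin{align*}
\|(\Delta Y,\Delta Z,\Delta H)\|^2\leq C\sup_{t\in[0,T]}\mathbb{E}|\Delta X_t|^2+\eta_Y\|(\Delta Y,\Delta Z)\|^2,
\end{align*}
with $\eta_Y$ again small.

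Substituting the first estimate into the second and using $\|(\Delta Y,\Delta Z)\|^2\leq\|(\Delta Y,\Delta Z,\Delta H)\|^2$, one collects a coefficient $(C\eta_X+\eta_Y)$ in front of $\|(\Delta Y,\Delta Z,\Delta H)\|^2$ on the right-hand side; for $T\leq\delta(L)$ this is strictly less than $1$, so it can be absorbed into the left, leaving $\|(\Delta Y,\Delta Z,\Delta H)\|^2\leq C\,\mathbb{E}|\Delta X_0|^2$. Taking square roots gives the claim. Equivalently, one may observe that the difference system is a linear MV-FBSDE with bounded coefficients and vanishing inhomogeneity $I_0=0$, and invoke the a priori estimate \eqref{eq theta estimate} directly with $\Delta X_0$ in place of $X_0$. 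The main point requiring care --- just as in the theorem --- is closing this coupled pair of inequalities: one must verify that the jump contributions entering through the uncompensated $N(dr,d\theta)$ in the forward equation, together with the $Y$- and $Z$-feedback through the slopes $\gamma^i,\rho^1$, produce feedback constants that stay below $1$ once $\eps$ and $\delta(L)$ are fixed small, and here the finiteness of $\nu$ and the integrability condition \eqref{eq Ltheta} are exactly what render these jump terms controllable.
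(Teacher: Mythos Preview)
Your proposal is correct and follows essentially the same route as the paper: linearize the coefficient differences with bounded slopes, apply the jump-diffusion It\^o formula to $|\Delta X_t|^2$ and $|\Delta Y_t|^2$ to obtain the two coupled inequalities (the only new term compared to Theorem \ref{thm FBSDE small time duration} being $\mathbb{E}|\Delta X_0|^2$ in the forward estimate), and close them for $T\leq\delta(L)$. The paper carries out exactly this, first isolating the $(\Delta Y,\Delta Z)$ bound and then reading off the $\Delta H$ bound from the same backward It\^o identity, whereas you bundle all three at once; this is only a packaging difference.
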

\begin{proof}
With the same notations as in the proof of Theorem \ref{thm FBSDE small time duration}, we have
\begin{equation*}
\left\{\begin{split}
\Delta X_t=&\,\Delta X_0+\int_0^t\big[\alpha_r^1\Delta X_r+\beta_r^1\Delta\mu_r+\gamma_r^1\Delta Y_r+\rho^1_r\Delta Z_r\big]dr+\int_0^t\big[\alpha_r^2\Delta X_r+\beta_r^2\Delta\mu_r\\&+\gamma_r^3\Delta Y_r\big]dW_r
+\int_0^t\int_E\big[\alpha^4_r(\theta)\Delta X_{r-}+\beta^4_r(\theta)\Delta \mu_r+\gamma^4_r(\theta)\Delta Y_{r-}\big]N(dr,d\theta),\\
\Delta Y_t=&\,\lambda^1\Delta X_T+\lambda^2 \Delta\mu_T+\int_t^T\big[\alpha^3_r\Delta X_r+\beta^3_r\Delta\mu_r+\gamma^3_r\Delta Y_r+\rho^3_r\Delta Z_r\big]dr\\&-\int_t^T\Delta Z_rdW_r-\int_t^T\int_E \Delta H_r(\theta)\tilde{N}(dr, d\theta).
\end{split}\right.\end{equation*}
Then it follows from the proof of Theorem \ref{thm FBSDE small time duration} that
\begin{align*}
\mathbb{E}|\Delta X_t|^2&\leq \mathbb{E}|\Delta X_0|^2+ \delta(L)\Big(C+\frac{4\|\gamma^1\|^2_{\infty}+4\|\gamma^4\|^2_{\infty}+4\|\rho^1\|^2_\infty}{\eps}\Big)\sup_{t\in[0,T]}\mathbb{E}|\Delta X_t|^2\\
&\quad+\big(3\eps+\delta(L)\|\gamma^2\|_{\infty}^2+\delta(L)\|\gamma^4\|_{\infty}^2\big)\|(\Delta Y,\Delta Z)\|^2
\end{align*}
and 
\begin{align}\label{eq prop2-1}
&\|(\Delta Y,\Delta Z)\|^2+\mathbb{E}\int_t^T\int_E\Big(|\Delta Y_{r-}+\Delta H_r(\theta)|^2-|\Delta Y_{r-}|^2-2\Delta Y_{r-}\Delta H_r(\theta)\Big)\nu(d\theta)dr\nonumber\\
\leq&\,\big(|\lambda^1|+|\lambda^2|+C\delta(L)\big)\sup_{t\in[0,T]}\mathbb{E}|\Delta X_t|^2+(C\eps^{-1}\delta(L)+\eps)\|(\Delta Y,\Delta Z)\|^2.
\end{align}
Combining the above two estimates yields
\begin{align*}
\|(\Delta Y,\Delta Z)\|\leq \frac{C(1+\delta(L))}{1-\theta'}\mathbb{E}|\Delta X_0|^2.
\end{align*}
Therefore, there exists a constant $C$ such that 
\begin{align*}
\sup_{t\in[0,T]}\mathbb{E}|Y^1_t-Y^2_t|^2+\mathbb{E}\int_0^T|Z_t^1-Z_t^2|^2dt\leq C\mathbb{E}|X^1_0-X^2_0|^2.
\end{align*}
Still by \eqref{eq prop2-1}, we obtain
\begin{align*}
\mathbb{E}\int_0^T\int_E|\Delta H_r(\theta)|^2\nu(d\theta)dr
\leq&\, (|\lambda^1|+|\lambda^2|+C\delta(L))\sup_{t\in[0,T]}\mathbb{E}|\Delta X_t|^2\\&+(C\eps^{-1}\delta(L)+\eps)\|(\Delta Y,\Delta Z)\|^2,
\end{align*}
which leads us to the conclusion.
\end{proof}
Once the well-posedness of MV-FBSDE \eqref{eq MVFBSDE fully 1} is established, one can immediately obtain the well-posedness of \eqref{eq MVFBSDE fully 2} using the classical theory of FBSDEs, since the measure $\mathbb{P}_{X^{t,\xi}}$ is no longer a variable. (See \cite{pardoux1999forward} and \cite{zhang2006wellposedness} for details.) Next, we present a proposition concerning the continuity of the solution to \eqref{eq MVFBSDE fully 2}.
\begin{proposition}\label{proposition continuity eq2}
Let the conditions of Proposition \ref{proposition continuity} hold. Given any $x,x'\in\mathbb{R}^d,\mu,\mu'\in\mathcal{P}_2(\mathbb{R}^d)$, let $(X^{t,x,\mu},Y^{t,x,\mu},Z^{t,x,\mu},H^{t,x,\mu})$ and $(X^{t,x',\mu'},Y^{t,x',\mu'},Z^{t,x',\mu'},H^{t,x',\mu'})$ be the solutions of \eqref{eq MVFBSDE fully 2} with initial values $(t,x,\mu)$ and $(t,x',\mu')$, respectively. Then we have
\begin{align*}
\|(Y^{t,x,\mu}-Y^{t,x',\mu'},Z^{t,x,\mu}-Z^{t,x',\mu'},H^{t,x,\mu}-H^{t,x',\mu'})\|\leq C(|x-x'|+W_2(\mu,\mu')).
\end{align*}
\end{proposition}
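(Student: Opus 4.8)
The plan is to mimic the difference-equation analysis from the proofs of Theorem \ref{thm FBSDE small time duration} and Proposition \ref{proposition continuity}, the one genuinely new ingredient being that the measure flow $\mu_r^{t,\xi}$ appearing in \eqref{eq MVFBSDE fully 2} is an \emph{external} input determined solely by the initial law $\mu$ through the coupled McKean--Vlasov system \eqref{eq MVFBSDE fully 1}. First I would reduce the Wasserstein distance to an $L^2$ distance of initial data: choose $\xi,\xi'\in L^2(\Omega,\mathscr{F}_t,\mathbb{P};\mathbb{R}^d)$ with $\mathbb{P}_\xi=\mu$ and $\mathbb{P}_{\xi'}=\mu'$ realizing the optimal coupling, so that $[\mathbb{E}|\xi-\xi'|^2]^{1/2}=W_2(\mu,\mu')$. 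Since $\mu_r^{t,\xi}$ depends on $\xi$ only via its law, this choice is harmless.

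The crucial preliminary step is to control the discrepancy between the two population flows. From Proposition \ref{proposition continuity}, together with the forward $\Delta X$ estimate displayed in its proof and the absorption of the $\delta(L)$-small self-referential terms valid under $T\leq\delta(L)$, I obtain
$$\sup_{r\in[t,T]}W_2\big(\mu_r^{t,\xi},\mu_r^{t,\xi'}\big)^2\leq\sup_{r\in[t,T]}\mathbb{E}\big|X_r^{t,\xi}-X_r^{t,\xi'}\big|^2\leq C\,\mathbb{E}|\xi-\xi'|^2=C\,W_2(\mu,\mu')^2.$$
Once this is in hand the measure arguments in \eqref{eq MVFBSDE fully 2} are \emph{frozen} deterministic flows, so the system becomes an ordinary (decoupled, non-McKean--Vlasov) FBSDE with jumps and the rest of the argument is purely pathwise.

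Next I would write the difference equations for $\Delta X:=X^{t,x,\mu}-X^{t,x',\mu'}$, $\Delta Y$, $\Delta Z$, $\Delta H$, linearizing each coefficient difference by the telescoping/mean-value device used before. This produces bounded coefficients $\alpha^i,\gamma^i,\rho^i$ (all bounded by $L$ via Assumptions \ref{assumption MVFBSDE f}--\ref{assumption y lip}), an inhomogeneous initial term $x-x'$ in the forward part, a terminal contribution controlled by $|x-x'|+W_2(\mu_T,\mu'_T)$ coming from $g$, and, wherever the measure argument enters, a forcing term of the form $\beta^i_r\,W_2(\mu_r^{t,\xi},\mu_r^{t,\xi'})$ which by the previous step is bounded by $C\,W_2(\mu,\mu')$. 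Applying It\^o's formula for jump-diffusion processes to $|\Delta X_s|^2$ and $|\Delta Y_s|^2$ exactly as in \eqref{eq small time x}--\eqref{eq small time y} yields
$$\mathbb{E}|\Delta X_s|^2\leq C\big(|x-x'|^2+W_2(\mu,\mu')^2\big)+\delta(L)(\cdots)\sup_r\mathbb{E}|\Delta X_r|^2+(\cdots)\|(\Delta Y,\Delta Z)\|^2,$$
and the analogous estimate for $\|(\Delta Y,\Delta Z)\|^2$, with the same self-referential coefficients shown to be contractive in Theorem \ref{thm FBSDE small time duration}.

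Finally, combining the two estimates and invoking $T\leq\delta(L)$ to absorb the self-referential terms (the contraction argument of Theorem \ref{thm FBSDE small time duration}) closes the estimate and gives $\|(\Delta Y,\Delta Z)\|^2\leq C(|x-x'|^2+W_2(\mu,\mu')^2)$; re-using the It\^o identity for $|\Delta Y_s|^2$ then recovers the bound on $\mathbb{E}\int_0^T\!\int_E|\Delta H_r(\theta)|^2\nu(d\theta)\,dr$ exactly as at the end of Proposition \ref{proposition continuity}, and taking square roots yields the claim. I expect the main obstacle to be the measure-flow step: one must verify that the population flow $r\mapsto\mu_r^{t,\xi}$ depends Lipschitz-continuously on the initial law in $W_2$, and it is precisely the small-time contraction underlying Proposition \ref{proposition continuity} that makes this possible—without it one could not separate the effect of the initial-law perturbation on the flow from its effect inside the frozen-measure FBSDE.
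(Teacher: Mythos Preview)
Your proposal is correct and follows essentially the same strategy as the paper's proof: linearize the difference FBSDE, apply the It\^o estimates \eqref{eq small time x}--\eqref{eq small time y}, absorb the self-referential terms via $T\leq\delta(L)$, and exploit law-invariance to pass from $\mathbb{E}|\xi-\xi'|^2$ to $W_2(\mu,\mu')^2$. The only organizational differences are that the paper treats the $x$- and $\mu$-perturbations separately (dispatching the $x$-case by citation to classical FBSDE theory) and keeps the population-flow term $\sup_r\mathbb{E}|\Delta_\mu X^{t,\xi}_r|^2$ inline in the combined estimate rather than bounding it beforehand via Proposition~\ref{proposition continuity} as a preliminary lemma; your explicit isolation of that step is slightly cleaner but materially equivalent.
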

\begin{proof}
The continuity with respect to $x$ is well-known (see, e.g., Theorem 3 in \cite{zhang2006wellposedness}); hence we focus on proving continuity with respect to the measure $\mu$. To this end, define $\Delta_\mu\Theta^{t,x,\mu}:=\Theta^{t,x,\mu}-\Theta^{t,x,\mu'}$ and  $\Delta_\mu \Theta^{t,\xi}:=\Theta^{t,\xi}-\Theta^{t,\xi'}$, where $\mu:=\mathbb{P}_{\xi}$ and $\mu':=\mathbb{P}_{\xi'}$. By using the same method as in Proposition \ref{proposition continuity}, we obtain
\begin{align*}
\mathbb{E}&|\Delta_\mu X_t^{t,x,\mu}|^2\leq \, \delta(L)\Big(C+\frac{4\|\gamma^1\|^2_{\infty}+4\|\gamma^4\|^2_{\infty}+4\|\rho^1\|^2_\infty}{\eps}\Big)\sup_{t\in[0,T]}\mathbb{E}|\Delta_\mu X^{t,x,\mu}_t|^2\\&+C\delta(L)\sup_{t\in[0,T]}\mathbb{E}|\Delta_\mu X^{t,\xi}_t|^2+\big(3\eps+\delta(L)\|\gamma^2\|_{\infty}^2+\delta(L)\|\gamma^4\|_{\infty}^2\big)\|(\Delta_\mu Y^{t,x,\mu},\Delta_\mu Z^{t,x,\mu})\|^2
\end{align*}
and 
\begin{align*}
\|(\Delta_\mu Y^{t,x,\mu}&,\Delta_\mu Z^{t,x,\mu},\Delta_\mu H^{t,x,\mu})\|^2
\leq(|\lambda^1|+C\delta(L))\sup_{t\in[0,T]}\mathbb{E}|\Delta_\mu X^{t,x,\mu}_t|^2\\&+(|\lambda^2|+C\delta(L))\sup_{t\in[0,T]}\mathbb{E}|\Delta_\mu X^{t,\xi}_t|^2+(C\eps^{-1}\delta(L)+\eps)\|(\Delta_\mu Y^{t,x,\mu},\Delta_\mu Z^{t,x,\mu})\|^2.
\end{align*}
Combining the above estimates provides 
\begin{align*}
\|(\Delta_\mu Y^{t,x,\mu},\Delta_\mu Z^{t,x,\mu},\Delta_\mu H^{t,x,\mu})\|^2\leq C\mathbb{E}|\xi-\xi'|^2.
\end{align*}
Since the solution $\Theta^{t,x,\mu}$ depends on $\xi$ only through its distribution, we have
\begin{align*}
\|(\Delta_\mu Y^{t,x,\mu},\Delta_\mu Z^{t,x,\mu},\Delta_\mu H^{t,x,\mu})\|\leq C\inf_{ \xi\sim\mu,\xi'\sim\mu'}(\mathbb{E}|\xi-\xi'|^2)^{\frac{1}{2}}=CW_2(\mu,\mu'),
\end{align*}
which completes the proof.
\end{proof}

\section{The first order derivative of the solution}
To analyze the classical solutions of master equation \eqref{eq master introduction game}, it is necessary to study the differentiability of the solutions to MV-FBSDE \eqref{eq MVFBSDE fully 2} with respect to both $x$ and $\mu$. This requires the following $2p$-order moment estimates ($p \geq 1$).
\begin{theorem}\label{thm p-estimate} Let $\Theta^{t,\xi}$ and $\Theta^{t,x,\mu}$ be the solutions to the MV-FBSDEs \eqref{eq MVFBSDE fully 1} and \eqref{eq MVFBSDE fully 2}, respectively. Under Assumptions \ref{assumption SDE}--\ref{assumption y lip} and for any $T\leq \delta(L)$, there exists for any $p\geq 1$ a constant $C_{p,L}>0$, depending only on $p$ and the Lipschitz constants, such that
\begin{align*}
&\|\Theta^{t,\xi}-\Theta^{t,\xi'}\|_{2p}:=\mathbb{E}\bigg[\sup_{s\in[t,T]}|X^{t,\xi}_s-X_s^{t,\xi'}|^{2p}+\sup_{s\in[t,T]}|Y^{t,\xi}_s-Y^{t,\xi'}_s|^{2p}+\Big(\int_t^T|Z^{t,\xi}_s-Z^{t,\xi'}_s|^{2}ds\Big)^p\\
&\qquad\qquad\qquad\qquad+\Big(\int_t^T\int_E|H^{t,\xi}_s(\theta)-H^{t,\xi'}_s(\theta)|^{2}d\nu(\theta)ds\Big)^p\bigg]\leq C_{p,L}\mathbb{E}|\xi-\xi'|^{2p}\end{align*}
and
\begin{align*}
\|\Theta^{t,x,\mu}-\Theta^{t,x',\mu'}\|_{2p}\leq C_{p,L}\big(|x-x'|^{2p}+W_2(\mu,\mu')^{2p}\big).
\end{align*}
\end{theorem}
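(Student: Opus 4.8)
The plan is to extend the $L^2$ (i.e.\ $p=1$) arguments underlying Theorem \ref{thm FBSDE small time duration} and Proposition \ref{proposition continuity} to general $2p$-moments, by applying the jump-diffusion It\^o formula to $|\Delta X_s|^{2p}$ and $|\Delta Y_s|^{2p}$, controlling the resulting martingale parts via the Burkholder--Davis--Gundy (BDG) inequality, and then closing the coupling between the forward and backward estimates using the smallness of $T\leq\delta(L)$. As in the proof of Theorem \ref{thm FBSDE small time duration}, I would first linearize the coefficients along the differences $\Delta X,\Delta Y,\Delta Z,\Delta H$, writing the increments of $b,\sigma,h,f,g$ as bounded coefficients $\alpha^i,\beta^i,\gamma^i,\rho^i$ (all bounded by $L$) times the corresponding differences, while the Wasserstein increments are controlled by $\Delta\mu_s=W_2(\mu^1_s,\mu^2_s)\leq(\mathbb{E}|\Delta X_s|^2)^{1/2}$.

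For the forward component, applying the jump-diffusion It\^o formula to $|\Delta X_s|^{2p}$ produces a drift term of order $|\Delta X_s|^{2p-1}$ against the linearized increments, a Brownian martingale term, and a Poisson term of the form $|\Delta X_{s-}+(\text{jump increment})|^{2p}-|\Delta X_{s-}|^{2p}$. Splitting the latter into its compensated and compensator parts, using the elementary bound $\big||a+b|^{2p}-|a|^{2p}\big|\leq C_p(|a|^{2p-1}|b|+|b|^{2p})$, and applying BDG to both martingale parts, I would obtain control of $\mathbb{E}\sup_{s\in[t,T]}|\Delta X_s|^{2p}$. The law-dependent contributions $|\Delta X_s|^{2p-1}\Delta\mu_s\leq|\Delta X_s|^{2p-1}(\mathbb{E}|\Delta X_s|^2)^{1/2}$ are handled by Young's inequality followed by Jensen's inequality $(\mathbb{E}|\Delta X_s|^2)^{p}\leq\mathbb{E}|\Delta X_s|^{2p}$, so that every measure term is absorbed into $\mathbb{E}\sup_{s}|\Delta X_s|^{2p}$; a Gronwall argument then yields $\mathbb{E}\sup_{s}|\Delta X_s|^{2p}\leq C_{p,L}\big(\mathbb{E}|\Delta X_t|^{2p}+B_{2p}\big)$, where $B_{2p}$ denotes the backward part of the $2p$-norm (the last three terms in $\|\cdot\|_{2p}$) and the prefactor on $B_{2p}$ is small for small $T$.

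For the backward component, applying It\^o to $|\Delta Y_s|^{2p}$ and again separating the Poisson martingale from its compensator gives, after BDG, control of $\mathbb{E}\sup_s|\Delta Y_s|^{2p}$ together with the quadratic-variation powers $\big(\int_t^T|\Delta Z_s|^2ds\big)^p$ and $\big(\int_t^T\int_E|\Delta H_s(\theta)|^2\nu(d\theta)ds\big)^p$; the standard device is to dominate these by $\mathbb{E}\sup_s|\Delta Y_s|^{2p}$ plus a Gronwall-absorbable remainder, following the $2p$-BSDE estimates of the type in \cite{barles1997backward} adapted to the jump setting as in \cite{li2018mean}. The terminal term $|\lambda^1\Delta X_T+\lambda^2\Delta\mu_T|^{2p}$ is bounded by $C_{p,L}\,\mathbb{E}\sup_s|\Delta X_s|^{2p}$. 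Combining this with the forward estimate and using $T\leq\delta(L)$ to make the coupling coefficients strictly less than one allows me to absorb $B_{2p}$ on the right-hand side, yielding the first inequality $\|\Theta^{t,\xi}-\Theta^{t,\xi'}\|_{2p}\leq C_{p,L}\,\mathbb{E}|\xi-\xi'|^{2p}$.

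The main obstacle I anticipate is the interaction, in the McKean--Vlasov setting, between the pathwise $2p$-norm and the law-dependent terms, since $W_2$ controls only second moments: one must repeatedly trade the mixed quantities $|\Delta X_s|^{2p-1}(\mathbb{E}|\Delta X_s|^2)^{1/2}$ for pure $2p$-moments via Young and Jensen \emph{without} destroying the smallness inherited from $T$, and simultaneously manage the jump contributions in both It\^o expansions (in particular the non-local increment term in $|\Delta X|^{2p}$ and the $\Delta H$ term in $|\Delta Y|^{2p}$), where the finiteness of $\nu$ together with the integrability condition \eqref{eq Ltheta} is essential. Finally, for the second inequality involving $\Theta^{t,x,\mu}$, I would observe that once the first (genuinely McKean--Vlasov) estimate fixes the flow $\mu^{t,\xi}_r$, equation \eqref{eq MVFBSDE fully 2} becomes a standard jump FBSDE with frozen measure argument: the $x$-dependence enters only through the deterministic initial datum and the $\mu$-dependence through the now-frozen coefficients, so the same $2p$-It\^o/BDG/Gronwall scheme---combined with Proposition \ref{proposition continuity eq2} and the first estimate to bound the frozen-flow perturbation by $W_2(\mu,\mu')^{2p}$---delivers $\|\Theta^{t,x,\mu}-\Theta^{t,x',\mu'}\|_{2p}\leq C_{p,L}\big(|x-x'|^{2p}+W_2(\mu,\mu')^{2p}\big)$.
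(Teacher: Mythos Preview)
Your overall scheme---apply the jump It\^o formula to $|\Delta X|^{2p}$ and $|\Delta Y|^{2p}$, control martingale parts by BDG, handle the $W_2$-terms by Young/Jensen, and close the forward--backward coupling by smallness of $T$---is exactly the one the paper uses for the first stage of its proof, and your treatment of the second inequality via the frozen-flow reduction also matches the paper's.

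However, there is a genuine gap. The constants produced by the $2p$-It\^o and BDG inequalities depend on $p$, so the absorption/closing step you describe only delivers the estimate on an interval of length $T\leq\delta(p,L)$, with a threshold that \emph{shrinks as $p$ grows}. The statement of the theorem, by contrast, asserts the estimate for all $p\geq1$ under the single constraint $T\leq\delta(L)$, the same $\delta(L)$ coming from Theorem~\ref{thm FBSDE small time duration}. Your proposal treats these as the same threshold without justification; they are not, and this is precisely the point the paper isolates and resolves.

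The paper's fix is an induction over a partition $t=t_0<t_1<\cdots<t_m=T$ with mesh at most $\min\{\delta(p,L),\delta(p,C_L)\}$. On the last subinterval $[t_{m-1},T]$ the local $2p$-estimate applies directly. To propagate backward, one must know that the new terminal datum $x\mapsto Y^{t_{m-1},x}_{t_{m-1}}$ is uniformly Lipschitz with a constant $C_L$ \emph{independent of $p$}; this is exactly what Proposition~\ref{proposition continuity} (valid on the whole interval $[t,T]$ because $T\leq\delta(L)$) provides. One also needs a uniform bound of the type $\mathbb{E}|Y^{s,0}_s|^{2p}\leq\tilde c_{p,L}(|g(0,\delta_0)|^{2p}+(\Pi_{s,T})^p)$, obtained by a Picard argument, to keep the induction going. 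With these two ingredients the local $2p$-estimate can be iterated across the partition and reassembled on $[t,T]$. Without this partition-plus-$L^2$-Lipschitz step, your argument proves the theorem only under the stronger hypothesis $T\leq\delta(p,L)$, which is not what is claimed.
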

\begin{proof}
To prove the first assertion, we begin by assuming that the processes $\Theta^{t,\xi}$ and $\Theta^{t,\xi'}$ have finite moments of $2p$-order and start with the case 
$T\leq \delta(p,L)$, where the constant $\delta(p,L)$ depends on both $p$ and $L$. By applying It\^o's formula, then taking the supremum over $s$ and the expectation, we get
\begin{align*}
&\mathbb{E}\Big[\sup_{s\in[t,T]}|X_s^{t,\xi}-X_s^{t,\xi'}|^{2p}\Big]\\
\leq&\, \mathbb{E}|\xi-\xi'|^{2p}+2p\,\mathbb{E}\sup_{s\in[t,T]}\bigg( \int_t^s\big(|X_r^{t,\xi}-X_r^{t,\xi'}|^{2}\big)^{p-1}\Big[\langle X_r^{t,\xi}-X_r^{t,\xi'},b_r-b_r'\rangle dr\\
&\quad+\langle X_r^{t,\xi}-X_r^{t,\xi'},\sigma_r-\sigma_r'\rangle dW_r\Big] \bigg)+p\,\mathbb{E}\sup_{s\in[t,T]}\bigg( \int_t^s\big(|X_r^{t,\xi}-X_r^{t,\xi'}|^{2}\big)^{p-1}|\sigma_r-\sigma_r'|^2 dr \bigg)\\
&+p(p-1)\mathbb{E}\sup_{s\in[t,T]}\bigg( \int_t^s\big(|X_r^{t,\xi}-X_r^{t,\xi'}|^{2}\big)^{p-2}| (X_r^{t,\xi}-X_r^{t,\xi'})\cdot(\sigma_r-\sigma_r')|^2 dr \bigg)\\
&+\mathbb{E}\sup_{s\in[t,T]}\bigg( \int_t^s\int_E\Big(|X_{r-}^{t,\xi}-X_{r-}^{t,\xi'}+h_r-h'_r|^{2p}-|X_{r-}^{t,\xi}-X_{r-}^{t,\xi'}|^{2p}\Big)N(dr,d\theta) \bigg),
\end{align*}
where $b_\cdot:=b(\cdot,X_\cdot^{t,\xi},\mu^{t,\xi}_\cdot,Y_\cdot^{t,\xi},Z_\cdot^{t,\xi})$ and $b_\cdot':=b(\cdot,X_\cdot^{t,\xi'},\mu^{t,\xi'}_\cdot,Y_\cdot^{t,\xi'},Z_\cdot^{t,\xi'})$, the same for $\sigma,h,f$ and $g$. By using Burkholder–Davis–Gundy inequality and the positiveness of $N(dr,d\theta)$, we can find a constant $c_{p,L}$ depending on $p$ and the Lipschitz constant $L$ such that
\begin{align*}
&\mathbb{E}\sup_{s\in[t,T]}|X^{t,\xi}_s-X^{t,\xi'}_s|^{2p}\\
\leq&\, \mathbb{E}|\xi-\xi'|^{2p}+ c_{p,L}\mathbb{E}\bigg[ \int_t^T |X^{t,\xi}_r-X^{t,\xi'}_r|^{2p-1}\big(|X^{t,\xi}_r-X^{t,\xi'}_r|+|Y^{t,\xi}_r-Y^{t,\xi'}_r|+|Z^{t,\xi}_r-Z^{t,\xi'}_r|\big)dr\\
&\,\,\,+\int_t^T |X^{t,\xi}_r-X^{t,\xi'}_r|^{2p-2}\big(|X^{t,\xi}_r-X^{t,\xi'}_r|^2+|Y^{t,\xi}_r-Y^{t,\xi'}_r|^2\big)dr\\
&\,\,\,+\int_t^T\int_E|X_{r-}^{t,\xi}-X_{r-}^{t,\xi'}+h_r-h'_r|^{2p}N(dr,d\theta) \bigg]+c_{p,L}\mathbb{E}\bigg(\int_t^T|X^{t,\xi}_r-X^{t,\xi'}_r|^{4p-2}|\sigma_r-\sigma_r'|^2dr\bigg)^{\frac{1}{2}}.
\end{align*}
Noting that $W_2(\mu^{t,\xi}_r,\mu^{t,\xi'}_r)\leq \big(\mathbb{E}|X^{t,\xi}_r-X^{t,\xi'}_r|^2\big)^{\frac{1}{2}}$, a direct calculation provides 
\begin{align*}
\mathbb{E}|\Delta X_r|^{2p-1}W_2(\mu^{t,\xi}_r,\mu^{t,\xi'}_r)\leq&\,\mathbb{E}|\Delta X_r|^{2p-1}\big(\mathbb{E}|\Delta X_r|^2\big)^{\frac{1}{2}}\\\leq&\,\big(\mathbb{E}|\Delta X_r|^{2p-2}\mathbb{E}|\Delta X_r|^{2p}\mathbb{E}|\Delta X_r|^2\big)^{\frac{1}{2}}
\leq \mathbb{E}|\Delta X_r|^{2p},
\end{align*}
where $\Delta X_r:=X^{t,\xi}_r-X^{t,\xi'}_r$. 
Moreover, it follows from the Lipschitz continuity of $h$ that
\begin{align*}
\mathbb{E}\int_t^T\int_E|X_{r-}^{t,\xi}-X_{r-}^{t,\xi'}+h_r-h'_r|^{2p}N(dr,d\theta)&=\mathbb{E}\int_t^T\int_E|X_{r-}^{t,\xi}-X_{r-}^{t,\xi'}+h_r-h'_r|^{2p}\nu(d\theta)dr\\
&\leq c_{p,L}\mathbb{E}\int_t^T\big(|X_r^{t,\xi}-X_r^{t,\xi'}|^{2p}+|Y^{t,\xi}_r-Y^{t,\xi'}_r|^{2p}\big)dr.
\end{align*}
In what follows, the constant $c_{p,L}$ may change from line to line. Thanks to Young's inequality, we deduce
\begin{align}\label{eq 2p-estimation A.5.3}
\mathbb{E}\sup_{s\in[t,T]}|X^{t,\xi}_s-X^{t,\xi'}_s|^{2p}
\leq&\,\mathbb{E}|\xi-\xi'|^{2p}+c_{p,L}\mathbb{E}\bigg[\int_t^T\big(|X_r^{t,\xi}-X_r^{t,\xi'}|^{2p}+|Y^{t,\xi}_r-Y^{t,\xi'}_r|^{2p}\big)dr\bigg]\nonumber\\
&+c_{p,L}T^{p}\mathbb{E}\bigg(\int_t^T|Z^{t,\xi}_r-Z^{t,\xi'}_r|^2dr\bigg)^p.
\end{align}
On the other hand, for any $s\in[t,T]$, applying It{\^o}'s formula to $|Y^{t,\xi}_s-Y^{t,\xi'}_s|^{2p}$, we obtain 
\begin{align}\label{eq estimate in ito}
&\mathbb{E}|Y^{t,\xi}_s-Y^{t,\xi'}_s|^{2p}+\mathbb{E}\int_s^T|Y^{t,\xi}_r-Y^{t,\xi'}_r|^{2p-2}|Z^{t,\xi}_r-Z^{t,\xi'}_r|^2dr\nonumber\\
&+C_p\mathbb{E}\int_s^T\int_E |Y^{t,\xi}_r-Y^{t,\xi'}_r|^{2p-2}|H^{t,\xi}_r(\theta)-H^{t,\xi'}_r(\theta)|^2\nu(d\theta)dr\\
\leq&\, c_{p,L}\bigg( \mathbb{E}|g-g'|^{2p}+\mathbb{E}\Big[ \int_s^T|Y^{t,\xi}_r-Y^{t,\xi'}_r|^{2p-2}\langle Y^{t,\xi}_r-Y^{t,\xi'}_r,f_r-f_r'\rangle dr \Big] \bigg).\nonumber
\end{align}
It follows from a standard calculation that
\begin{align*}
\mathbb{E}\sup_{s\in[t,T]}|Y^{t,\xi}_s-Y^{t,\xi'}_s|^{2p}
\leq &\,\mathbb{E}|g-g'|^{2p}+\mathbb{E}\int_t^T |Y^{t,\xi}_r-Y^{t,\xi'}_r|^{2p-2}\langle Y^{t,\xi}_r-Y^{t,\xi'}_r,f_r-f_r' \rangle dr\\
&+c_{p,L}\mathbb{E}\bigg[\bigg(\int_t^T |Y^{t,\xi}_r-Y^{t,\xi'}_r|^{4p-2}|Z^{t,\xi}_r-Z^{t,\xi'}_r|^2dr\bigg)^{\frac{1}{2}}\bigg],
\end{align*}
where
\begin{align*}
&\mathbb{E}\bigg[\bigg(\int_t^T |Y^{t,\xi}_r-Y^{t,\xi'}_r|^{4p-2}|Z^{t,\xi}_r-Z^{t,\xi'}_r|^2dr\bigg)^{\frac{1}{2}}\bigg]\\
\leq&\, \mathbb{E}\bigg[\sup_{s\in[t,T]}|Y^{t,\xi}_r-Y^{t,\xi'}_r|^{p}\bigg(\int_t^T\,|Y^{t,\xi}_r-Y^{t,\xi'}_r|^{2p-2}|Z^{t,\xi}_r-Z^{t,\xi'}_r|^2dr\bigg)^{\frac{1}{2}}\bigg]\\
\leq&\,\Big(\mathbb{E}\sup_{s\in[t,T]}|Y^{t,\xi}_r-Y^{t,\xi'}_r|^{2p}\Big)^{\frac{1}{2}}\mathbb{E}\bigg[\int_t^T\,|Y^{t,\xi}_r-Y^{t,\xi'}_r|^{2p-2}|Z^{t,\xi}_r-Z^{t,\xi'}_r|^2dr\bigg]^{\frac{1}{2}}\\
\leq &\,\eps\mathbb{E}\sup_{s\in[t,T]}|Y^{t,\xi}_r-Y^{t,\xi'}_r|^{2p}+\eps^{-1}\mathbb{E}\bigg[\int_t^T\,|Y^{t,\xi}_r-Y^{t,\xi'}_r|^{2p-2}|Z^{t,\xi}_r-Z^{t,\xi'}_r|^2dr\bigg].
\end{align*}
Combining the previous estimate with \eqref{eq estimate in ito}, applying Young's inequality, and choosing $\eps<\frac{1}{c_{p,L}}$, we deduce 
\begin{align}\label{eq 2p-estimation A.5.5}
\mathbb{E}\sup_{s\in[t,T]}|Y^{t,\xi}_s-Y^{t,\xi'}_s|^{2p}
\leq c_{p,L}\bigg(\mathbb{E}|g-g'|^{2p}+\mathbb{E}\int_t^T\big(|X^{t,\xi}_r-X^{t,\xi'}_r|^{2p}+|Y^{t,\xi}_r-Y^{t,\xi'}_r|^{2p}\big) dr \bigg).
\end{align}
Moreover, it follows the relation 
\begin{align*}
\int_t^T|Z^{t,\xi}_r-Z^{t,\xi'}_r|^2dr\leq&\, |g-g'|^2+\int_t^T\,\langle Y^{t,\xi}_r-Y^{t,\xi'}_r,f_r-f_r'\rangle dr\\
&-2\int_t^T\, \langle Y^{t,\xi}_r-Y^{t,\xi'}_r,Z^{t,\xi}_r-Z^{t,\xi'}_rdW_r\rangle
\end{align*}
that
\begin{align}\label{eq 2p-estimation A.5.6}
\mathbb{E}\bigg(\int_t^T|Z^{t,\xi}_r-Z^{t,\xi'}_r|^2dr\bigg)^p\leq c_{p,L}\mathbb{E}\bigg[ \sup_{s\in[t,T]}|X^{t,\xi}_s-X^{t,\xi'}_s|^{2p}+\sup_{s\in[t,T]}|Y^{t,\xi}_s-Y^{t,\xi'}_s|^{2p} \bigg].
\end{align}
The estimate for $H$ can be obtained in a similar way. Putting together \eqref{eq 2p-estimation A.5.3}, \eqref{eq 2p-estimation A.5.5} and \eqref{eq 2p-estimation A.5.6}, we conclude that there exists a constant $\delta(p,L) > 0$ such that, for all $T \leq \delta(p,L)$,
\begin{align}\label{eq 2p-estimation local}
\|\Theta^{t,\xi}-\Theta^{t,\xi'}\|_{2p}\leq c_{p,L}\mathbb{E}|\xi-\xi'|^{2p}.
\end{align}
Now we consider the general case where the upper bound for $T$ does not depend on $p$, meaning that the condition $T \leq \delta(p,L)$ in \eqref{eq 2p-estimation local} can be relaxed to $T \leq \delta(L)$. First, we can prove by Picard iteration that there exists a constant $\tilde{\delta}(p,L)$ such that, for any $T \leq \tilde{\delta}(p,L)$,
\begin{align}\label{eq 2p-estimation A.5.8}
\|\Theta^{t,\xi}\|_{2p}\leq \tilde{c}_{p,L}\big(\mathbb{E}|\xi|^{2p}+(\Pi_{t,T})^{p}+|g(0,\delta_0)|^{2p}\big),
\end{align}
where
\begin{align*}
\Pi_{t,T}:=&\,\bigg(\int_t^T\big(|b(r,0,\delta_0,0,0)|+|f(r,0,\delta_0,0,0)|\big)dr+\int_t^T\int_E|h(r,0,\delta_0,0,\theta)|\nu(d\theta)dr\bigg)^2\\&
+\int_t^T\|\sigma(r,0,\delta_0,0)\|^2dr.
\end{align*}
The proof of \eqref{eq 2p-estimation A.5.8} is omitted, since it follows the same argument as Theorem A.5 in \cite{delarue2002existence}. Hence, without loss of generality, we may assume $\delta(p,L)\leq \tilde{\delta}(p,L)$.\\
Second, by Proposition \ref{proposition continuity}, we have \begin{align}\label{eq 2p-estimation prop 2}
\mathbb{E}|Y^{t,\xi}_s-Y^{t,\xi'}_s|\leq C_L\big(\mathbb{E}|\xi-\xi'|^2\big)^{\frac{1}{2}}, \quad \forall s\in[t,T].
\end{align}
Consider a subdivision $t=t_0<t_1<\cdots<t_m=T$ with mesh size $\max_i |t_{i+1}-t_i| \leq \min\{\delta(p,C_L), \delta(p,L)\}$.
Then, we know from \eqref{eq 2p-estimation local} and \eqref{eq 2p-estimation A.5.8} that there exist constants $c_{p,L}'$ and $\tilde{c}_{p,L}'$ such that
\begin{align*}
&\mathbb{E}\bigg[\sup_{s\in[t_{m-1},T]}|X^{t,\xi}_s-X_s^{t,\xi'}|^{2p}+\sup_{s\in[t_{m-1},T]}|Y^{t,\xi}_s-Y^{t,\xi'}_s|^{2p}+\bigg(\int_{t_{m-1}}^T|Z^{t,\xi}_s-Z^{t,\xi'}_s|^{2}ds\bigg)^p\\
&\qquad+\bigg(\int_{t_{m-1}}^T\int_E|H^{t,\xi}_s(\theta)-H^{t,\xi'}_s(\theta)|^{2}\nu(d\theta)ds\bigg)^p\bigg]
\leq\,c_{p,L}'\mathbb{E}|\xi-\xi|^{2p}
\end{align*}
and 
\begin{align}\label{eq yss0}
\mathbb{E}|Y^{s,0}_s|^{2p}\leq \tilde{c}_{p,L}'\big(|g(0,\delta_0)|^{2p}+(\Pi_{s,T})^p\big),\quad \forall s\in[t_{m-1},T].
\end{align}
Note that $(X_s^{t,\xi},Y_s^{t,\xi},Z_s^{t,\xi},H_s^{t,\xi})_{s\in[t_{m-2},t_{m-1}]}$ is the solution to the following MV-FBSDE:
\begin{equation*}
\left\{\begin{split}
X_s=&\,X^{t,\xi}_{t_{m-2}}+\int_{t_{m-2}}^sb(r,X_r,\mu_r,Y_r,Z_r)dr+\int_{t_{m-2}}^s\sigma(r,X_r,\mu_r,Y_r)dW_r\\
&+\int_{t_{m-2}}^s\int_Eh(r,X_{r-},\mu_r,Y_{r-},\theta)N(dr,d\theta),\\Y_s=&\,Y_{t_{m-1}}^{t_{m-1},X_{t_{m-1}}}+\int_s^{t_{m-1}}f(r,X_r,\mu_r,Y_r,Z_r)dr-\int_s^{t_{m-1}}Z_rdW_r-\int_s^{t_{m-1}}H_r(\theta)\tilde{N}(dr,d\theta).
\end{split}\right.
\end{equation*}
Then, by \eqref{eq 2p-estimation prop 2} and \eqref{eq yss0}, $Y^{t_{m-1},\cdot}_{t_{m-1}}$ is $C_L$-Lipschitz continuous with respect to the initial value and is bounded. Therefore, again by \eqref{eq 2p-estimation local} and \eqref{eq 2p-estimation A.5.8}, we have
\begin{align*}
&\mathbb{E}\bigg[\sup_{s\in[t_{m-2},T]}|X^{t,\xi}_s-X_s^{t,\xi'}|^{2p}+\sup_{s\in[t_{m-2},T]}|Y^{t,\xi}_s-Y^{t,\xi'}_s|^{2p}+\bigg(\int_{t_{m-2}}^T|Z^{t,\xi}_s-Z^{t,\xi'}_s|^{2}ds\bigg)^p\\
&\qquad+\bigg(\int_{t_{m-2}}^T\int_E|H^{t,\xi}_s(\theta)-H^{t,\xi'}_s(\theta)|^{2}\nu(d\theta)ds\bigg)^p\bigg]
\leq\, c_{p,L}'\mathbb{E}|\xi-\xi|^{2p}
\end{align*}
and
\begin{align*}
\mathbb{E}|Y^{s,0}_s|^{2p}\leq \tilde{c}_{p,L}'\big(|g(0,\delta_0)|^{2p}+(\Pi_{s,T})^p\big),\quad \forall s\in[t_{m-2},T].
\end{align*}
The rest of the proof is completed by induction.\\
For the second assertion, the $2p$-continuity in $x$ follows from standard arguments for classical FBSDEs (see, e.g., Theorem A.5 in \cite{delarue2002existence}), where the estimate for the jump term employs the same technique as in the first assertion. The $2p$-continuity in $\mu$ can be established in the same way as in Proposition \ref{proposition continuity eq2}.
\end{proof}
\begin{remark}
From the proof of the first assertion of Theorem \ref{thm p-estimate}, we see that the small‑time condition $T \leq \delta(L)$ can be removed, provided the following assumptions hold:\\
1. (Global well‑posedness) For any $t \in [0,T]$, $\xi \in L^2(\Omega,\mathscr{F}_t,\mathbb{P};\mathbb{R}^d)$ and $x \in \mathbb{R}^d$, equations \eqref{eq MVFBSDE fully 1} and \eqref{eq MVFBSDE fully 2} admit unique solutions on $[t,T]$, denoted by $(X^{t,\xi},Y^{t,\xi},Z^{t,\xi},H^{t,\xi})$ and $(X^{t,x,\mu},Y^{t,x,\mu},Z^{t,x,\mu},H^{t,x,\mu})$, respectively.\\
2. (Uniform Lipschitz continuity of the value function) There exists a constant $C > 0$, independent of $t$, such that for all $\xi,\xi' \in L^2(\Omega,\mathscr{F}_t,\mathbb{P};\mathbb{R}^d)$, $x,x' \in \mathbb{R}^d$ and $\mu,\mu' \in \mathcal{P}_2(\mathbb{R}^d)$,
\begin{align*}
\mathbb{E} |Y^{t,\xi}_t - Y^{t,\xi'}_t|^2 \leq C \mathbb{E}|\xi-\xi'|^2, \quad
|Y^{t,x,\mu}_t - Y^{t,x',\mu'}_t| \leq C\big(|x-x'| + W_2(\mu,\mu')\big).
\end{align*}
The extension to arbitrary $T$ is then achieved by an induction argument analogous to that in Theorem \ref{thm p-estimate}, using the above uniform estimates (in particular, the first inequality).
\end{remark}
\begin{remark}
When the intensity measure $\nu$ is infinite, analogous well-posedness and regularity results to the above still hold. In this case, the state equation must involve the compensated Poisson random measure term
\begin{align*}
\int_E\,h(t,X_{r-},\mu_r,Y_r,\theta)\tilde{N}(dr,d\theta)
\end{align*}
to ensure the integrability. Moreover, the regularity condition on the coefficient $h$ has to be strengthened: for any $t\in[0,T],\theta\in E,x,x'\in\mathbb{R}^d,\mu,\mu'\in\mathcal{
P}_2(\mathbb{R}^d),y,y'\in\mathbb{R}$, there exists a constant $L > 0$ such that
\begin{align*}
|h(t,x,\mu,y,\theta)|\leq L(1\wedge|\theta|)
\end{align*}
and
\begin{align*}
|h(t,x,\mu,y,\theta)-h(t,x',\mu',y',\theta)|\leq L(1\wedge|\theta|)(|x-x'|+W_2(\mu,\mu')+|y-y'|).
\end{align*}
For the corresponding It\^o formula and the required estimates, we refer to \cite{hao2016mean} and \cite{li2018mean}.
\end{remark}

\subsection{First-order derivatives of $(X^{t,x,\mu},Y^{t,x,\mu}, Z^{t,x,\mu},H^{t,x,\mu})$ w.r.t. $x$}

In this subsection, we revisit the first-order derivatives of $(X^{t,x,\mu},Y^{t,x,\mu}, Z^{t,x,\mu},H^{t,x,\mu})$ with respect to  $x$. 
\begin{assumption}\label{assumption first derivative}
For any $(t,\theta)\in[0,T]\times E$, the coefficients $(b,f)(t,\cdot,\cdot,\cdot,\cdot),\ \sigma(t,\cdot,\cdot,\cdot),\ g(\cdot,\cdot)$ and $h(t,\cdot,\cdot,\cdot,\theta)$ satisfy that, for $1\leq i,j\leq d$,\\
(1) $\forall(x,y,z)\in\mathbb{R}^d\times\mathbb{R}\times\mathbb{R}^d$, $b_j(t,x,\cdot,y,z),\sigma_{i,j}(t,x,\cdot,y),h_j(t,x,\cdot,y,\theta),f(t,x,\cdot,y,z)$ and $g(x,\cdot)$ are in $\mathscr{C}^{1}(\mathcal{P}_2(\mathbb{R}^d))$. The first-order derivatives $\partial_\mu b_j(t,x,\mu,y,z,v)$, $\partial_\mu \sigma_{i,j}(t,x,\mu,y,v)$, $\partial_\mu f(t,x,\mu,y,z,v)$ and $\partial_\mu g(x,\mu,v)$ are jointly Lipschitz continuous in $(x,\mu,y,z,v)$ with constant $L$ and are uniformly bounded by $L$;\\
(2) $\forall\mu\in\mathcal{P}_2(\mathbb{R}^d)$, $b_j(t,\cdot,\mu,\cdot,\cdot),f(t,\cdot,\mu,\cdot,\cdot)\in C^{1,1,1}_b(\mathbb{R}^d\times\mathbb{R}\times\mathbb{R}^d)$, $\sigma_{i,j}(t,\cdot,\mu,\cdot),h(t,\cdot,\mu,\cdot,\theta)\in C^{1,1}_b(\mathbb{R}^d\times\mathbb{R})$ and $g(\cdot,\mu)\in C^1_b(\mathbb{R}^d)$. The first-order derivatives of $b,\sigma,f,g$ w.r.t. $x,y,z$ are jointly Lipschitz continuous in $(x,y,z,\mu)$ with constant $L$ and are uniformly bounded by $L$;\\ 
(3) All the first-order derivatives of $h$ w.r.t. $x,y$ are jointly Lipschitz continuous in $(x,\mu,y)$ with $L(\theta)$ and are bounded by $L(\theta)$, where the function $L(\theta)$ satisfies \eqref{eq Ltheta};\\
(4) The first-order derivative $\partial_\mu h(t,x,\mu,y,\theta,v)$ is jointly Lipschitz continuous in $(x,\mu,y,v)$ with $L(\theta)$ and is bounded by $L(\theta)$, where the function $L(\theta)$ satisfies \eqref{eq Ltheta}.
\end{assumption}
Denote $\pi^{t,x,\mu}_r:=(r,X^{t,x,\mu}_r,\mu_r^{t,\xi},Y_r^{t,x,\mu},Z_r^{t,x,\mu})$,  $\pi^{t,x,\mu,(0)}_r:=(r,X^{t,x,\mu}_r,\mu_r^{t,\xi},Y_r^{t,x,\mu})$ for simplicity.
Let $(\{\partial_xX^{t,x,\mu,j}=(\partial_{x_i}X^{t,x,\mu,j})_{1\leq i\leq d}\}_{1\leq j\leq d},\partial_xY^{t,x,\mu}=(\partial_{x_i}Y^{t,x,\mu})_{1\leq i\leq d},\{\partial_xZ^{t,x,\mu,l}=(\partial_{x_i}Z^{t,x,\mu,l})_{1\leq i \leq d}\}_{1\leq l\leq d},\partial_xH^{t,x,\mu}=(\partial_{x_i}H^{t,x,\mu})_{1\leq i\leq d})$ be the solution to the following classical FBSDE:
\begin{align}\label{eq first x-derivative}
\partial_{x_i}X^{t,x,\mu,j}_s=&\,\delta_{i,j}+\sum_{k=1}^d\int_t^s\,\Big( \partial_{x_k}b_j(\pi^{t,x,\mu}_r)\partial_{x_i}X_r^{t,x,\mu,k}+\partial_{z_k}b_j(\pi^{t,x,\mu}_r)\partial_{x_j}Z^{t,x,\mu,k}_r \Big)dr\nonumber\\
&+\int_t^s\, \partial_y b_j(\pi_r^{t,x,\mu})\partial_{x_i}Y^{t,x,\mu}_rdr
+\sum_{k,l=1}^d\int_t^s\, \partial_{x_k}\sigma_{j,l}(\pi_r^{t,x,\mu,(0)})\partial_{x_i}X^{t,x,\mu,k}dW_r^l\nonumber\\&+\sum_{l=1}^d\int_t^s\,\partial_{x_k}\sigma_{j,l}(\pi_r^{t,x,\mu,(0)})\partial_xY^{t,x,\mu}_rdW_r^l\nonumber\\
&+\sum_{k=1}^d\int_t^s\int_E\, \partial_{x_k}h(\pi^{t,x,\mu,(0)}_{r-},\theta)\partial_{x_i}X^{t,x,\mu,k}_{r-}N(dr,d\theta)\nonumber\\
&+\int_t^s\int_E\, \partial_yh(\pi^{t,x,\mu,(0)}_{r-},\theta)\partial_{x_i}Y^{t,x,\mu}_{r-}N(dr,d\theta),\nonumber\\
\partial_{x_i}Y^{t,x,\mu}_s=&\,\sum_{k=1}^d\partial_{x_k}g(X^{t,x,\mu}_T,\mu^{t,\xi}_T)\partial_{x_i}X_T^{t,x,\mu,k}+\int_s^T\bigg( \sum_{k=1}^d\partial_{x_k}f(\pi_r^{t,x,\mu})\partial_{x_k}X^{t,x,\mu,k}_r\nonumber\\&+\partial_yf(\pi^{t,x,\mu}_r)\partial_{x_i}Y^{t,x,\mu}_r+\sum_{k=1}^d\partial_{z_k}f(\pi^{t,x,\mu}_r)\partial_{x_i}Z^{t,x,\mu,k}_r\bigg)dr\nonumber\\&-\int_t^s\int_E \partial_{x_i}H^{t,x,\mu}_r(\theta)\tilde{N}(dr,d\theta)-\sum_{l=1}^d\int_s^T\,\partial_{x_i}Z^{t,x,\mu,l}_rdW_r^l.
\end{align}
Under Assumption \ref{assumption first derivative} and for sufficiently small $T$, Theorem \ref{thm FBSDE small time duration} guarantees that FBSDE \eqref{eq first x-derivative} admits a unique solution satisfying the following property:
\begin{theorem}\label{thm first x-derivative}
Under Assumptions \ref{assumption SDE}-\ref{assumption first derivative}, for any $T \leq \delta(L)$, the $L^2$-derivative of $(X^{t,x,\mu}, Y^{t,x,\mu},$\\$Z^{t,x,\mu})$ with respect to $x$ exists and coincides with the solution $(\{\partial_xX^{t,x,\mu,j}\}_{1\leq j\leq d},\partial_xY^{t,x,\mu},$\\$\{\partial_xZ^{t,x,\mu,j}\}_{1\leq j\leq d})$ to \eqref{eq first x-derivative}, that is 
\begin{align*}
&\mathbb{E}\Big[ \sup_{s\in[t,T]}|X^{t,x+h,\mu}_s-X^{t,x,\mu}_s-\partial_{x}X_s^{t,x,\mu}h|^2 \Big]=o(|h|^2);\\
&\mathbb{E}\Big[ \sup_{s\in[t,T]}|Y^{t,x+h,\mu}_s-Y^{t,x,\mu}_s-\partial_{x}Y_s^{t,x,\mu}h|^2 \Big]=o(|h|^2);\\
&\mathbb{E}\bigg[ \int_t^T |Z^{t,x+h,\mu}_r-Z^{t,x,\mu}_r-\partial_{x}Z^{t,x,\mu}_rh|^2dr\bigg]=o(|h|^2),\\
&\mathbb{E}\bigg[ \int_t^T\int_E |H^{t,x+h,\mu}_r(\theta)-H^{t,x,\mu}_r(\theta)-\partial_{x}H^{t,x,\mu}_r(\theta)h|^2\nu(d\theta)dr\bigg]=o(|h|^2).
\end{align*}
\end{theorem}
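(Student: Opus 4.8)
The plan is to establish differentiability by the classical linearization (remainder) method. The key simplification is that in \eqref{eq MVFBSDE fully 2} the flow of measures $\mu^{t,\xi}_r=\mathbb P_{X^{t,\xi}_r}$ is \emph{frozen}: it is generated by the decoupled system \eqref{eq MVFBSDE fully 1} and does not depend on the initial datum $(x,\mu)$ of \eqref{eq MVFBSDE fully 2}. Hence differentiation in $x$ is a purely classical FBSDE problem, with $\mu^{t,\xi}_r$ entering only as a fixed coefficient, which is why only the $(x,y,z)$-derivatives of $b,\sigma,h,f,g$ appear in \eqref{eq first x-derivative}. First I would record that \eqref{eq first x-derivative} is a linear FBSDE whose coefficients are precisely these partial derivatives; under Assumption \ref{assumption first derivative} they are bounded by $L$ (resp.\ by $L(\theta)$ for the jump coefficient), so Theorem \ref{thm FBSDE small time duration} yields unique solvability for $T\le\delta(L)$. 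Applying the $2p$-estimates of Theorem \ref{thm p-estimate} to this linear system gives $\partial_xX^{t,x,\mu},\partial_xY^{t,x,\mu}\in\mathcal S^{2p}_{\mathbb F}$ and $\partial_xZ^{t,x,\mu},\partial_xH^{t,x,\mu}$ in the corresponding $L^{2p}$-type spaces for every $p\ge1$; these moment bounds will serve as dominating functions in the limit.

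For $h\in\mathbb R^d$ write $\Delta^hX:=X^{t,x+h,\mu}-X^{t,x,\mu}$ and similarly $\Delta^hY,\Delta^hZ,\Delta^hH$. By the fundamental theorem of calculus applied to the $C^1$ coefficients, $(\Delta^hX,\Delta^hY,\Delta^hZ,\Delta^hH)$ solves a linear FBSDE driven by the interpolated derivatives $\widehat b^{\,h,x}_r:=\int_0^1\partial_xb\big(\lambda\pi^{t,x+h,\mu}_r+(1-\lambda)\pi^{t,x,\mu}_r\big)\,d\lambda$ and the analogous $\widehat b^{\,h,y}_r,\widehat b^{\,h,z}_r$, with the corresponding objects for $\sigma,h,f,g$ (evaluated at $\pi^{t,x,\mu,(0)}$ for the $\sigma$- and $h$-terms); all of these inherit the bound $L$ (resp.\ $L(\theta)$). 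Subtracting $h$ times \eqref{eq first x-derivative}, the remainder $I^h:=\Delta^hX-\partial_xX^{t,x,\mu}h$ (and $J^h,K^h,L^h$ defined analogously for $Y,Z,H$) solves a linear FBSDE with the \emph{same} interpolated coefficients and zero initial value $I^h_t=0$, forced by terms of the schematic form $E^{h,\star}_r=\big(\widehat\star^{\,h,\bullet}_r-\partial_\bullet\star(\pi^{t,x,\mu}_r)\big)\,\big(\partial_x\bullet^{t,x,\mu}_r\,h\big)$, that is (interpolated-minus-exact derivative) times an $O(|h|)$ variational factor, plus the terminal error coming from $g$.

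Since every interpolated coefficient is bounded by $L$ (resp.\ $L(\theta)$), the contraction estimate underlying Theorem \ref{thm FBSDE small time duration} applies \emph{uniformly in $h$} and gives $\|(I^h,J^h,K^h,L^h)\|\le C\big(\mathbb E|E^{h,g}|^2+\mathbb E\!\int_t^T|E^{h,\mathrm{fwd}}_r|^2dr+\mathbb E\!\int_t^T|E^{h,f}_r|^2dr\big)$ with $C=C(L,T)$. It then suffices to show each forcing term is $o(|h|^2)$. Factoring out $|h|^2$, this reduces to showing $\mathbb E\int_t^T|\widehat\star^{\,h,\bullet}_r-\partial_\bullet\star(\pi^{t,x,\mu}_r)|^2\,|\partial_x\bullet^{t,x,\mu}_r|^2\,dr=o(1)$ and its jump analogue. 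Here each factor $\widehat\star^{\,h,\bullet}_r-\partial_\bullet\star(\pi^{t,x,\mu}_r)$ is bounded (by $2L$, resp.\ $2L(\theta)$) and, by the $2p$-continuity of Theorem \ref{thm p-estimate}, $\pi^{t,x+h,\mu}_r\to\pi^{t,x,\mu}_r$ as $h\to0$, so by continuity of the derivatives these factors tend to $0$ in $dr\otimes d\mathbb P$-measure (resp.\ $\nu(d\theta)\,dr\otimes d\mathbb P$-measure); the processes $\partial_x\bullet^{t,x,\mu}$ supply $L^2$ dominating functions, and the jump term additionally uses $\int_EL(\theta)^2\nu(d\theta)<\infty$, which follows from \eqref{eq Ltheta}. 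Dominated convergence then gives the $o(1)$, hence $\|(I^h,J^h,K^h,L^h)\|=o(|h|^2)$, which is exactly the four expansions claimed.

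The main obstacle is the joint treatment of the jump forcing and the $Z$/$H$ forcing. The interpolated jump derivatives are controlled only by the $\theta$-dependent constant $L(\theta)$, so the limit must be taken under the measure $\nu(d\theta)\,dr\,d\mathbb P$ and relies essentially on the integrability \eqref{eq Ltheta} (which also yields $\int_EL(\theta)^2\nu(d\theta)<\infty$). At the same time, the $z$-forcing arising from $\partial_zb$ and $\partial_zf$ involves $\partial_xZ^{t,x,\mu}$, which lives only in $L^2$ in time and has no c\`adl\`ag supremum to extract; consequently one cannot use an explicit Lipschitz rate and must argue by dominated convergence with $\partial_xZ^{t,x,\mu}$ (and $\partial_xH^{t,x,\mu}$) themselves as the dominating functions. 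Verifying the convergence in measure of the interpolated coefficients together with the uniform $L^2$-domination for precisely these terms is the delicate point, and it is here that the $2p$-moment estimates of Theorem \ref{thm p-estimate} are indispensable, as they guarantee the required uniform integrability.
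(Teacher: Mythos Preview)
Your proposal is correct and follows precisely the ``rather standard argument'' that the paper itself invokes (citing \cite{buckdahn2017mean,pardoux2005backward}) in lieu of a detailed proof: freeze the measure flow $\mu^{t,\xi}_r$, linearize via interpolated derivatives, and control the remainder FBSDE by the small-time contraction estimate together with dominated convergence backed by the $2p$-moment bounds of Theorem~\ref{thm p-estimate}. Your careful identification of the jump and $Z$-forcing terms as the delicate points, and your use of \eqref{eq Ltheta} to secure $\int_E L(\theta)^2\,\nu(d\theta)<\infty$, are exactly the additional ingredients required in the jump-diffusion setting.
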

The proof of Theorem \ref{thm first x-derivative} follows a rather standard argument; see, e.g., \cite{buckdahn2017mean, pardoux2005backward}. 
Then, using an argument analogous to that of Theorem \ref{thm p-estimate}, we establish the following result.
\begin{proposition}
Let Assumptions \ref{assumption SDE}-\ref{assumption first derivative} hold and $
T\leq\delta(L)$. Then, for any $p \geq 1$, there exists a constant $C_p > 0$ depending on the Lipschitz constants such that, for any $x,x'\in\mathbb{R}^d,\mu,\mu'\in\mathcal{P}_2(\mathbb{R}^d)$,
\begin{equation*}\begin{split}
&\|(\partial_xX^{t,x,\mu},\partial_xY^{t,x,\mu},\partial_xZ^{t,x,\mu},\partial_xH^{t,x,\mu})\|_{2p}\leq C_p,\\
&\|(\partial_xX^{t,x,\mu}-\partial_xX^{t,x',\mu'},\partial_xY^{t,x,\mu}-\partial_xY^{t,x',\mu'},\partial_xZ^{t,x,\mu}-\partial_xZ^{t,x',\mu'},\partial_xH^{t,x,\mu}-\partial_xH^{t,x',\mu'})\|_{2p}\\&\leq C_p\big(|x-x'|^{2p}+W_2(\mu,\mu')^{2p}\big).
\end{split}\end{equation*}
\end{proposition}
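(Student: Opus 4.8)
The plan is to apply the $2p$-estimate machinery of Theorem~\ref{thm p-estimate} to the \emph{linear} FBSDE \eqref{eq first x-derivative}, whose coefficients are the partial derivatives of $b,\sigma,h,f,g$ evaluated along the trajectory $\pi_r^{t,x,\mu}$ and are therefore bounded by $L$ (and by $L(\theta)$ for the jump part, with $L(\theta)$ satisfying \eqref{eq Ltheta}) under Assumption~\ref{assumption first derivative}. Write $\nabla\Theta^{t,x,\mu}:=(\partial_x X^{t,x,\mu},\partial_x Y^{t,x,\mu},\partial_x Z^{t,x,\mu},\partial_x H^{t,x,\mu})$ for the solution of \eqref{eq first x-derivative}. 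For the uniform bound I would apply It\^o's formula to $|\partial_x X_s^{t,x,\mu}|^{2p}$ and $|\partial_x Y_s^{t,x,\mu}|^{2p}$ and then invoke the Burkholder--Davis--Gundy inequality together with the positivity of $N(dr,d\theta)$, exactly as in \eqref{eq 2p-estimation A.5.3}--\eqref{eq 2p-estimation A.5.6}. The only inhomogeneous data are the initial value $\delta_{i,j}$, bounded by $1$, and the terminal value $\sum_k\partial_{x_k}g(X^{t,x,\mu}_T,\mu^{t,\xi}_T)\partial_{x_i}X_T^{t,x,\mu,k}$, bounded by $L|\partial_x X_T^{t,x,\mu}|$; since the system is linear with bounded coefficients, the resulting inequalities close for $T\le\delta(L)$ and yield $\|\nabla\Theta^{t,x,\mu}\|_{2p}\le C_p$ with $C_p$ independent of $(x,\mu)$.

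For the Lipschitz estimate I would write the FBSDE satisfied by the difference $\Delta:=\nabla\Theta^{t,x,\mu}-\nabla\Theta^{t,x',\mu'}$. Subtracting the two copies of \eqref{eq first x-derivative} decomposes each coefficient term into a \emph{homogeneous} part, in which a derivative of $b,\sigma,h$ or $f$ evaluated at $(x,\mu)$ multiplies $\Delta$, and a \emph{source} part of the form $[\partial_\bullet\Phi(\pi_r^{t,x,\mu})-\partial_\bullet\Phi(\pi_r^{t,x',\mu'})]\,\nabla\Theta_r^{t,x',\mu'}$ with $\Phi\in\{b,\sigma,h,f\}$, together with the analogous terminal source coming from $\partial_x g$. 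The homogeneous part is treated exactly as in the uniform bound and, after shrinking $T$, contributes a strictly contractive factor; the source part is the genuine input. By the joint Lipschitz continuity of the derivatives of the coefficients (Assumption~\ref{assumption first derivative}) one has $|\partial_\bullet\Phi(\pi_r^{t,x,\mu})-\partial_\bullet\Phi(\pi_r^{t,x',\mu'})|\le L\,|\pi_r^{t,x,\mu}-\pi_r^{t,x',\mu'}|$, so the source is dominated by $L\,|\pi_r^{t,x,\mu}-\pi_r^{t,x',\mu'}|\,|\nabla\Theta_r^{t,x',\mu'}|$.

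To bound the $2p$-norm of such products I would apply H\"older's inequality to split the two factors, estimating the coefficient difference through the $4p$-continuity of the \emph{original} solution furnished by Theorem~\ref{thm p-estimate}, namely $\|\Theta^{t,x,\mu}-\Theta^{t,x',\mu'}\|_{4p}\le C_{2p,L}(|x-x'|^{4p}+W_2(\mu,\mu')^{4p})$, and the derivative factor by the uniform $4p$-bound $\|\nabla\Theta^{t,x',\mu'}\|_{4p}\le C_{2p}$ from the first step. Multiplying the two square roots produces precisely the target order $|x-x'|^{2p}+W_2(\mu,\mu')^{2p}$. Feeding this source estimate into the Gronwall/contraction argument of Theorem~\ref{thm p-estimate} then closes the inequality for $\Delta$ and gives the second assertion.

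I expect the main obstacle to lie in the jump terms of the difference estimate: unlike the Brownian part, the Poisson integral in \eqref{eq first x-derivative} carries the uncompensated measure $N(dr,d\theta)$ and the $\theta$-dependent Lipschitz weight $L(\theta)$, so the cross terms $[\partial_x h(\pi_r^{t,x,\mu})-\partial_x h(\pi_r^{t,x',\mu'})]\,\partial_x X_{r-}^{t,x',\mu'}$ and $[\partial_y h(\pi_r^{t,x,\mu})-\partial_y h(\pi_r^{t,x',\mu'})]\,\partial_x Y_{r-}^{t,x',\mu'}$ must be controlled through the integrated condition \eqref{eq Ltheta} rather than a uniform bound, and the $4p$-moment H\"older splitting has to be organized compatibly with the $\nu(d\theta)\,dr$ integration. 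The remaining steps are a routine adaptation of the estimates already carried out for Theorems~\ref{thm p-estimate} and \ref{thm first x-derivative}.
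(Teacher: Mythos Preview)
Your proposal is correct and follows exactly the route the paper indicates: it states that the proposition is established ``using an argument analogous to that of Theorem~\ref{thm p-estimate}'' and gives no further detail, so your plan of rerunning the It\^o/BDG estimates \eqref{eq 2p-estimation A.5.3}--\eqref{eq 2p-estimation A.5.6} on the linear system \eqref{eq first x-derivative}, and then handling the difference via a homogeneous-plus-source decomposition with a H\"older splitting into $4p$-moments, is precisely the intended argument. Your identification of the jump cross terms and the role of \eqref{eq Ltheta} as the only nonroutine point is also accurate.
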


\subsection{First-order derivatives of $(X^{t,x,\mu},Y^{t,x,\mu}, Z^{t,x,\mu},H^{t,x,\mu})$ w.r.t. $\mu$}

In this subsection, we study the first-order derivatives of $(X^{t,x,\mu},Y^{t,x,\mu}, Z^{t,x,\mu},H^{t,x,\mu})$ w.r.t. $\mu$. 
First, we introduce the following FBSDE: for any $\eta\in L^2(\Omega,\mathscr{F}_t,\mathbb{P};\mathbb{R}^d)$, 
\begin{align}\label{eq first mu-derivative}
\mathcal{X}^{t,x,\mu}_s(\eta)=&\int_t^s\Big(\partial_xb(\pi^{t,x,\mu}_r)\mathcal{X}_r^{t,x,\mu}(\eta)+\partial_yb(\pi^{t,x,\mu}_r)\mathcal{Y}^{t,x,\mu}_r(\eta)+\partial_zb(\pi^{t,x,\mu}_r)\mathcal{Z}^{t,x,\mu}_r(\eta)\Big)dr\nonumber\\
&+\int_t^s\Big(\partial_x\sigma(\pi^{t,x,\mu,(0)}_r)\mathcal{X}_r^{t,x,\mu}(\eta)+\partial_y\sigma(\pi^{t,x,\mu,(0)}_r)\mathcal{Y}^{t,x,\mu}_r(\eta)\Big)dW_r\nonumber\\
&+\int_t^s\int_E\,\Big(\partial_x h(\pi^{t,x,\mu,(0)}_{r-},\theta)\mathcal{X}_{r-}^{t,x,\mu}(\eta)+\partial_y h(\pi^{t,x,\mu,(0)}_{r-},\theta)\mathcal{Y}^{t,x,\mu}_{r-}(\eta)\Big)N(dr,d\theta)\nonumber\\
&+\tilde{\mathbb{E}}\bigg[ \int_t^s\, \partial_\mu b(\pi^{t,x,\mu}_r,\tilde{X}^{t,\tilde{\xi}}_r)(\partial_x\tilde{X}^{t,\tilde{\xi},\mu}_r\tilde{\eta}+\tilde{\mathcal{X}}^{t,\tilde{\xi},\mu}_r(\eta))dr\nonumber\\
&\qquad+\int_t^s\, \partial_\mu \sigma(\pi^{t,x,\mu,(0)}_r,\tilde{X}^{t,\tilde{\xi}}_r)(\partial_x\tilde{X}^{t,\tilde{\xi},\mu}_r\tilde{\eta}+\tilde{\mathcal{X}}^{t,\tilde{\xi},\mu}_r(\eta))dW_r\nonumber\\
&\qquad+\int_t^s\,\int_E\, \partial_\mu h(\pi^{t,x,\mu,(0)}_{r-},\tilde{X}^{t,\tilde{\xi}}_{r-},\theta)(\partial_x\tilde{X}^{t,\tilde{\xi},\mu}_{r-}\tilde{\eta}+\tilde{\mathcal{X}}^{t,\tilde{\xi},\mu}_{r-}(\eta))N(dr,d\theta)\bigg],\nonumber\\
\mathcal{Y}^{t,x,\mu}_s(\eta)=&\,\partial_xg(X^{t,x,\mu}_T,\mu_T^{t,\xi})\mathcal{X}^{t,x,\mu}_T(\eta)+\tilde{\mathbb{E}}\Big[ \partial_\mu g(X^{t,x,\mu}_T,\mu_T^{t,\xi},\tilde{X}^{t,\tilde{\xi}}_T)(\partial_x\tilde{X}^{t,\tilde{\xi},\mu}_T\tilde{\eta}+\tilde{\mathcal{X}}^{t,\tilde{\xi},\mu}_T(\eta)) \Big]\nonumber\\
&+\int_s^T\,\Big( \partial_xf(\pi^{t,x,\mu}_r)\mathcal{X}^{t,x,\mu}_r(\eta)+\partial_yf(\pi^{t,x,\mu}_r)\mathcal{Y}^{t,x,\mu}_r(\eta)+\partial_zf(\pi^{t,x,\mu}_r)\mathcal{Z}^{t,x,\mu}_r(\eta) \Big)dr\nonumber\\
&+\tilde{\mathbb{E}}\bigg[ \int_s^T\,\partial_\mu f(\pi^{t,x,\mu}_r,\tilde{X}_r^{t,\tilde{\xi}})(\partial_x\tilde{X}^{t,\tilde{\xi},\mu}_r\tilde{\eta}+\tilde{\mathcal{X}}^{t,\tilde{\xi},\mu}_r(\eta))dr \bigg]-\int_s^T\,\mathcal{Z}^{t,x,\mu}_r(\eta)dW_r\nonumber\\
&-\int_s^T\int_E\mathcal{H}_r^{t,x,\mu}(\eta,\theta)\tilde{N}(dr,d\theta),
\end{align}
where $\tilde{X}$ denotes the independent copy of the random variable $X$, and the same applies to $\tilde{\xi}$ and $\tilde{\eta}$. 
Note that, when equations \eqref{eq MVFBSDE fully 1} and \eqref{eq MVFBSDE fully 2} have unique solutions, we have the identification $\pi^{t,\xi,\mu}:=\pi^{t,x,\mu}|_{x=\xi}=\pi^{t,\xi}$.
The quadruple 
\begin{align*}
(\mathcal{X}^{t,\xi,\mu}(\eta),\mathcal{Y}^{t,\xi,\mu}(\eta),\mathcal{Z}^{t,\xi,\mu}(\eta),\mathcal{H}^{t,\xi,\mu}(\eta,\cdot)):=(\mathcal{X}^{t,x,\mu}(\eta),\mathcal{Y}^{t,x,\mu}(\eta),\mathcal{Z}^{t,x,\mu}(\eta),\mathcal{H}^{t,x,\mu}(\eta,\cdot))|_{x=\xi}
\end{align*}
satisfies the following MV-FBSDE, which is obtained from \eqref{eq first mu-derivative} by replacing $x$ with $\xi$:
\begin{align}\label{eq first mu-derivative alter}
\mathcal{X}^{t,\xi,\mu}_s(\eta)=&\int_t^s\Big(\partial_xb(\pi^{t,\xi}_r)\mathcal{X}_r^{t,\xi,\mu}(\eta)+\partial_yb(\pi^{t,\xi}_r)\mathcal{Y}^{t,\xi,\mu}_r(\eta)+\partial_zb(\pi^{t,\xi}_r)\mathcal{Z}^{t,\xi,\mu}_r(\eta)\Big)dr\nonumber\\
&+\int_t^s\Big(\partial_x\sigma(\pi^{t,\xi,(0)}_r)\mathcal{X}_r^{t,\xi}(\eta)+\partial_y\sigma(\pi^{t,\xi,(0)}_r)\mathcal{Y}^{t,\xi,\mu}_r(\eta)\Big)dW_r\nonumber\\
&+\int_t^s\int_E\,\Big(\partial_x h(\pi^{t,\xi,(0)}_{r-},\theta)\mathcal{X}_{r-}^{t,\xi,\mu}(\eta)+\partial_y h(\pi^{t,\xi,(0)}_{r-},\theta)\mathcal{Y}^{t,\xi,\mu}_{r-}(\eta)\Big)N(dr,d\theta)\nonumber\\
&+\tilde{\mathbb{E}}\bigg[ \int_t^s\, \partial_\mu b(\pi^{t,\xi}_r,\tilde{X}^{t,\tilde{\xi}}_r)(\partial_x\tilde{X}^{t,\tilde{\xi},\mu}_r\tilde{\eta}+\tilde{\mathcal{X}}^{t,\tilde{\xi},\mu}_r(\eta))dr\nonumber\\
&\qquad+\int_t^s\, \partial_\mu \sigma(\pi^{t,\xi,(0)}_r,\tilde{X}^{t,\tilde{\xi}}_r)(\partial_x\tilde{X}^{t,\tilde{\xi},\mu}_r\tilde{\eta}+\tilde{\mathcal{X}}^{t,\tilde{\xi},\mu}_r(\eta))dW_r\nonumber\\
&\qquad+\int_t^s\,\int_E\, \partial_\mu h(\pi^{t,\xi,(0)}_{r-},\tilde{X}^{t,\tilde{\xi}}_{r-},\theta)(\partial_x\tilde{X}^{t,\tilde{\xi},\mu}_{r-}\tilde{\eta}+\tilde{\mathcal{X}}^{t,\tilde{\xi},\mu}_{r-}(\eta))N(dr,d\theta)\bigg],\nonumber\\
\mathcal{Y}^{t,\xi,\mu}_s(\eta)=&\,\partial_xg(X^{t,\xi}_T,\mu_T^{t,\xi})\mathcal{X}^{t,\xi,\mu}_T(\eta)+\tilde{\mathbb{E}}\Big[ \partial_\mu g(X^{t,\xi}_T,\mu_T^{t,\xi},\tilde{X}^{t,\tilde{\xi}}_T)(\partial_x\tilde{X}^{t,\tilde{\xi},\mu}_T\tilde{\eta}+\tilde{\mathcal{X}}^{t,\tilde{\xi},\mu}_T(\eta)) \Big]\nonumber\\
&+\int_s^T\,\Big( \partial_xf(\pi^{t,\xi}_r)\mathcal{X}^{t,\xi,\mu}_r(\eta)+\partial_yf(\pi^{t,\xi}_r)\mathcal{Y}^{t,\xi,\mu}_r(\eta)+\partial_zf(\pi^{t,\xi}_r)\mathcal{Z}^{t,\xi,\mu}_r(\eta) \Big)dr\nonumber\\
&+\tilde{\mathbb{E}}\bigg[ \int_s^T\,\partial_\mu f(\pi^{t,\xi}_r,\tilde{X}_r^{t,\tilde{\xi}})(\partial_x\tilde{X}^{t,\tilde{\xi},\mu}_r\tilde{\eta}+\tilde{\mathcal{X}}^{t,\tilde{\xi},\mu}_r(\eta))dr \bigg]-\int_s^T\,\mathcal{Z}^{t,\xi,\mu}_r(\eta)dW_r\nonumber\\
&-\int_s^T\int_E \mathcal{H}^{t,\xi,\mu}_r(\eta,\theta)\tilde{N}(dr,d\theta).
\end{align}
Under Assumption \ref{assumption first derivative}, the coefficients of MV-FBSDEs \eqref{eq first mu-derivative} and \eqref{eq first mu-derivative alter} are Lipschitz continuous and bounded. This can be verified using estimates of the type illustrated below. For instance, for any $\xi,\xi'\in L^2(\Omega,\mathscr{F}_t,\mathbb{P};\mathbb{R}^d)$, we have
\begin{align*}
\tilde{\mathbb{E}}|\partial_\mu b(\pi^{t,x,\mu}_r,\tilde{X}^{t,\xi}_r)(\tilde{\xi}-\tilde{\xi}')|\leq L\tilde{\mathbb{E}}|\tilde{\xi}-\tilde{\xi}'|\leq L(\tilde{\mathbb{E}}|\tilde{\xi}-\tilde{\xi}'|^2)^{\frac{1}{2}}.
\end{align*}
Similar estimates hold for the other coefficients.
Hence, by Theorem \ref{thm FBSDE small time duration}, MV-FBSDEs \eqref{eq first mu-derivative} and \eqref{eq first mu-derivative alter} admit unique solutions for $T \leq \delta(L)$. Then, following the argument of Theorem \ref{thm p-estimate} (or Theorem A.1 in \cite{li2018mean}), we find that for any $p \geq 1$, there exists a constant $C_p$ such that
\begin{align*}
\|(\mathcal{X}^{t,x,\mu},\mathcal{Y}^{t,x,\mu},\mathcal{Z}^{t,x,\mu},\mathcal{H}^{t,x,\mu})\|_{2p}\leq C_p.
\end{align*}
\begin{lemma}\label{lemma 2}
Suppose that Assumptions \ref{assumption SDE}-\ref{assumption first derivative} hold. Then, for any $t\in[0,T]$, $x\in\mathbb{R}^d$ and $\mu:=\mathbb{P}_{\xi}\in\mathcal{P}_2(\mathbb{R}^d)$, there exists a quadruple of processes $(\partial_\mu X^{t,x,\mu}(v),\partial_\mu Y^{t,x,\mu}(v),\partial_\mu Z^{t,x,\mu}(v),\\\partial_\mu H^{t,x,\mu}(v,\cdot))\in \mathcal{S}^2_\mathbb{F}(t,T)\times \mathcal{S}^2_\mathbb{F}(t,T)\times \mathcal{M}^2_\mathbb{F}(t,T)\times\mathcal{K}_{\nu}^2(t,T)$, depending measurably on $v\in\mathbb{R}^d$, such that 
\begin{align*}
\mathcal{X}^{t,x,\mu}_s(\eta)=&\,\overline{\mathbb{E}}[\partial_\mu X^{t,x,\mu}_s(\overline{\xi})\cdot\overline{\eta}],\ \ \mathcal{Y}^{t,x,\mu}_s(\eta)=\overline{\mathbb{E}}[\partial_\mu Y^{t,x,\mu}_s(\overline{\xi})\cdot\overline{\eta}],\ \ \mathbb{P}-a.s.,\ \forall s,\\
\mathcal{Z}^{t,x,\mu}_s(\eta)=&\,\overline{\mathbb{E}}[\partial_\mu Z^{t,x,\mu}_s(\overline{\xi})\cdot\overline{\eta}],\ \ \ dsd\mathbb{P}-a.e.,\\
\mathcal{H}^{t,x,\mu}_s(\eta,\theta)=&\,\overline{\mathbb{E}}[\partial_\mu H^{t,x,\mu}_s(\overline{\xi},\theta)\cdot\overline{\eta}],\ \ \ dsd\nu d\mathbb{P}-a.e.
\end{align*}
In particular, for any $(t,x,\mu,\xi)$, the mapping
\begin{align*}
(\mathcal{X}^{t,x,\mu}(\cdot),\mathcal{Y}^{t,x,\mu}(\cdot),\mathcal{Z}^{t,x,\mu}(\cdot),\mathcal{H}^{t,x,\mu}(\cdot)):&\,L^2(\Omega,\mathscr{F}_t,\mathbb{P};\mathbb{R}^d)\\
&\to \mathcal{S}^2_\mathbb{F}(t,T)\times \mathcal{S}^2_\mathbb{F}(t,T)\times \mathcal{M}^2_\mathbb{F}(t,T)\times\mathcal{K}_{\nu}^2(t,T)
\end{align*}
is linear and continuous.
\end{lemma}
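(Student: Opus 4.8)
The plan is to exploit the fact that FBSDE \eqref{eq first mu-derivative} is \emph{linear} in both its unknowns and in the direction $\eta$, and then to combine an $L^2$-boundedness estimate with a law-invariance argument of the type underlying \eqref{lionsderivative} to extract the measurable kernels $\partial_\mu X^{t,x,\mu}(v)$, $\partial_\mu Y^{t,x,\mu}(v)$, $\partial_\mu Z^{t,x,\mu}(v)$ and $\partial_\mu H^{t,x,\mu}(v,\cdot)$. Write $\Psi(\eta):=(\mathcal{X}^{t,x,\mu}(\eta),\mathcal{Y}^{t,x,\mu}(\eta),\mathcal{Z}^{t,x,\mu}(\eta),\mathcal{H}^{t,x,\mu}(\eta,\cdot))$ for the unique solution. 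First I would record that $\eta\mapsto\Psi(\eta)$ is linear: the coefficients in \eqref{eq first mu-derivative} are affine in $(\mathcal{X},\mathcal{Y},\mathcal{Z},\mathcal{H})$ and the only explicit appearance of $\eta$ is through $\partial_x\tilde{X}^{t,\tilde\xi,\mu}_r\tilde\eta$ inside $\tilde{\mathbb{E}}[\cdot]$, so uniqueness of the solution (Theorem \ref{thm FBSDE small time duration}, $T\le\delta(L)$) forces $\Psi(a\eta+b\eta')=a\Psi(\eta)+b\Psi(\eta')$. Continuity follows by applying the a priori estimate of Theorem \ref{thm FBSDE small time duration} to this linear system: every inhomogeneous term is proportional to $\tilde\eta$, so a Gronwall/contraction argument gives
\begin{align*}
\|(\mathcal{X}^{t,x,\mu}(\eta),\mathcal{Y}^{t,x,\mu}(\eta),\mathcal{Z}^{t,x,\mu}(\eta),\mathcal{H}^{t,x,\mu}(\eta,\cdot))\|\le C\big(\tilde{\mathbb{E}}|\tilde\eta|^2\big)^{1/2}=C\big(\mathbb{E}|\eta|^2\big)^{1/2}.
\end{align*}
This already establishes the final (``in particular'') assertion of linearity and continuity.

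The crucial structural step is to show that $\Psi(\eta)$ depends on $\eta$ only through the joint law $\mathcal{L}(\xi,\eta)$. Inspecting \eqref{eq first mu-derivative}, the direction $\eta$ enters exclusively through expressions of the form $\tilde{\mathbb{E}}[\Phi(\pi^{t,x,\mu}_r,\tilde{X}^{t,\tilde\xi}_r)(\partial_x\tilde{X}^{t,\tilde\xi,\mu}_r\tilde\eta+\tilde{\mathcal{X}}^{t,\tilde\xi,\mu}_r(\eta))]$ with $\Phi\in\{\partial_\mu b,\partial_\mu\sigma,\partial_\mu h,\partial_\mu f,\partial_\mu g\}$; since $\tilde{\mathbb{E}}$ integrates out the entire independent copy (the noise, $\tilde\xi$ and $\tilde\eta$), each such term is a functional of $\mathcal{L}(\tilde\xi,\tilde\eta)=\mathcal{L}(\xi,\eta)$ alone. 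Because $\tilde{\mathcal{X}}^{t,\tilde\xi,\mu}_r(\eta)$ is itself the copy of the solution, I would close the argument by uniqueness: if $\mathcal{L}(\xi,\eta)=\mathcal{L}(\xi,\eta')$, then $\Psi(\eta)$ and $\Psi(\eta')$ solve the same linear FBSDE and must coincide, so $\mathcal{X}^{t,x,\mu}_s(\eta)$ (and likewise $\mathcal{Y},\mathcal{Z},\mathcal{H}$) depends on $\eta$ only via $\mathcal{L}(\xi,\eta)$.

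With linearity, $L^2$-continuity and law-invariance in hand, the representation is obtained by the same mechanism that produces \eqref{lionsderivative} (Proposition 5.25 in \cite{carmona2018probabilistic}): for fixed $s$ the map $\eta\mapsto\mathcal{X}^{t,x,\mu}_s(\eta)$ is a continuous linear functional on $L^2(\Omega,\mathscr{F}_t,\mathbb{P};\mathbb{R}^d)$, so Riesz provides a representing element, and law-invariance forces this element to be $\sigma(\xi)$-measurable, hence of the form $\partial_\mu X^{t,x,\mu}_s(\xi)$ for a Borel map $v\mapsto\partial_\mu X^{t,x,\mu}_s(v)$; this yields $\mathcal{X}^{t,x,\mu}_s(\eta)=\overline{\mathbb{E}}[\partial_\mu X^{t,x,\mu}_s(\overline\xi)\cdot\overline\eta]$ on a further copy $(\overline\Omega,\overline{\mathscr{F}},\overline{\mathbb{P}})$. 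The same argument applied to $\mathcal{Y},\mathcal{Z},\mathcal{H}$ produces the remaining kernels, and the estimate $\|(\mathcal{X}^{t,x,\mu},\mathcal{Y}^{t,x,\mu},\mathcal{Z}^{t,x,\mu},\mathcal{H}^{t,x,\mu})\|_{2p}\le C_p$ recorded before the lemma guarantees they lie in $\mathcal{S}^2_\mathbb{F}(t,T)\times\mathcal{S}^2_\mathbb{F}(t,T)\times\mathcal{M}^2_\mathbb{F}(t,T)\times\mathcal{K}^2_\nu(t,T)$.

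The main obstacle I anticipate is not the representation at a single $(s,\omega)$ but the \emph{joint measurability} of the kernels in $(s,v)$ (and in $(s,\theta,v)$ for $\partial_\mu H$), together with the simultaneous treatment of the càdlàg forward component, the $\mathcal{M}^2$ control component $\mathcal{Z}$ and the $\mathcal{K}^2_\nu$ jump component $\mathcal{H}$, since the naive pointwise Riesz representation is defined only up to null sets that may depend on $(s,\omega)$. I would resolve this by first representing a countable dense family of directions, then extending by the $L^2$-continuity bound to select jointly measurable versions, exactly as in the decoupled construction of \cite{li2018mean}; the recursive self-reference through $\tilde{\mathcal{X}}^{t,\tilde\xi,\mu}(\eta)$ needs no separate treatment, being automatically consistent by the uniqueness invoked in the law-invariance step.
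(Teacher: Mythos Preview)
Your approach is correct but takes a genuinely different route from the paper. The paper does not extract the kernel abstractly via Riesz and law-invariance; instead it \emph{constructs} $(\partial_\mu X^{t,x,\mu}(v),\partial_\mu Y^{t,x,\mu}(v),\partial_\mu Z^{t,x,\mu}(v),\partial_\mu H^{t,x,\mu}(v,\cdot))$ directly as the solution of an explicit FBSDE \eqref{eq first mu-derivative-2} parameterized by $v\in\mathbb{R}^d$, obtained from \eqref{eq first mu-derivative} by replacing each inhomogeneous term $\tilde{\mathbb{E}}[\partial_\mu b(\cdot,\tilde{X}^{t,\tilde\xi}_r)(\partial_x\tilde{X}^{t,\tilde\xi,\mu}_r\tilde\eta+\tilde{\mathcal{X}}^{t,\tilde\xi,\mu}_r(\eta))]$ with its ``disintegrated'' counterpart $\tilde{\mathbb{E}}[\partial_\mu b(\cdot,\tilde{X}^{t,v,\mu}_r)\partial_x\tilde{X}^{t,v,\mu}_r+\partial_\mu b(\cdot,\tilde{X}^{t,\tilde\xi}_r)\partial_\mu\tilde{X}^{t,\tilde\xi,\mu}_r(v)]$. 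One then substitutes $v=\overline\xi$, multiplies by $\overline\eta$, takes $\overline{\mathbb{E}}$, and checks that the resulting process solves \eqref{eq first mu-derivative}; uniqueness yields the representation, and linearity/continuity follow as corollaries rather than as inputs. Your functional-analytic route is conceptually clean, but it trades two things. First, the joint-measurability obstacle you flag is real and forces the countable-density-plus-extension patchwork you outline, whereas the paper's construction sidesteps it entirely since the kernel is, for every $v$, a bona fide FBSDE solution; relatedly, your Riesz argument naturally produces the kernel only $\mu$-a.e.\ in $v$, while the lemma asserts membership in $\mathcal{S}^2_\mathbb{F}\times\mathcal{S}^2_\mathbb{F}\times\mathcal{M}^2_\mathbb{F}\times\mathcal{K}^2_\nu$ for each $v$. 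Second, and more consequential for the paper as a whole, the explicit FBSDE \eqref{eq first mu-derivative-2} is precisely what drives the subsequent regularity estimate (Proposition~\ref{estimateofeqfirstmu-derivative-2}) and the second-order construction (Theorem~\ref{thm second derivative}); your abstract existence proof, while sufficient for this lemma in isolation, would oblige you to write that FBSDE down anyway for the downstream arguments.
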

\begin{proof}
To prove the existence, we consider the following FBSDE for each $v\in\mathbb{R}^d$:
\begin{align}\label{eq first mu-derivative-2}
&\partial_\mu X^{t,x,\mu}_s(v)=\int_t^s\Big(\partial_xb(\pi^{t,x,\mu}_r)\partial_\mu X_r^{t,x,\mu}(v)+\partial_y b(\pi^{t,x,\mu}_r)\partial_\mu Y^{t,x,\mu}_r(v)+\partial_z b(\pi^{t,x,\mu}_r)\partial_\mu Z^{t,x,\mu}_r(v)\Big)dr\nonumber\\
&+\int_t^s\Big(\partial_x\sigma(\pi^{t,x,\mu,(0)}_r)\partial_\mu X_r^{t,x,\mu}(v)+\partial_y\sigma(\pi^{t,x,\mu,(0)}_r)\partial_\mu Y^{t,x,\mu}_r(v)\Big)dW_r\nonumber\\
&+\int_t^s\int_E\Big(\partial_x h(\pi^{t,x,\mu,(0)}_{r-},\theta)\partial_\mu X_{r-}^{t,x,\mu}(v)+\partial_y h(\pi^{t,x,\mu,(0)}_{r-},\theta)\partial_\mu Y^{t,x,\mu}_{r-}(v)\Big)N(dr,d\theta)\nonumber\\
&+\tilde{\mathbb{E}}\bigg[ \int_t^s\, \Big(\partial_\mu b(\pi^{t,x,\mu}_r,\tilde{X}^{t,v,\mu}_r)\partial_x\tilde{X}^{t,v,\mu}_r+\partial_\mu b(\pi^{t,x,\mu}_r,\tilde{X}^{t,\tilde{\xi}}_r)\partial_\mu \tilde{X}^{t,\tilde{\xi},\mu}_r(v)\Big)dr\nonumber\\
&\qquad+\int_t^s\Big(\partial_\mu \sigma(\pi^{t,x,\mu,(0)}_r,\tilde{X}^{t,v,\mu}_r)\partial_x\tilde{X}^{t,v,\mu}_r+\partial_\mu \sigma(\pi^{t,x,\mu}_r,\tilde{X}^{t,\tilde{\xi}}_r)\partial_\mu \tilde{X}^{t,\tilde{\xi},\mu}_r(v)\Big)dW_r\nonumber\\
&\qquad+\int_t^s\int_E\, \Big(\partial_\mu h(\pi^{t,x,\mu,(0)}_{r-},\tilde{X}^{t,v,\mu}_{r-})\partial_x\tilde{X}^{t,v,\mu}_{r-}+\partial_\mu h(\pi^{t,x,\mu,(0)}_{r-},\tilde{X}^{t,\tilde{\xi}}_{r-})\partial_\mu \tilde{X}^{t,\tilde{\xi},\mu}_{r-}(v)\Big)N(dr,d\theta)\bigg],\nonumber\\
&\partial_\mu Y^{t,x,\mu}_s(v)=\partial_xg(X^{t,x,\mu}_T,\mu_T^{t,\xi})\partial_\mu X^{t,x,\mu}_T(v)-\int_s^T\partial_\mu Z^{t,x,\mu}_r(v)dW_r-\int_s^T\int_E \partial_\mu H^{t,x,\mu}_r(v,\theta)\tilde{N}(dr,d\theta)\nonumber\\
&+\tilde{\mathbb{E}}\Big[ \partial_\mu g(X^{t,x,\mu}_T,\mu_T^{t,\xi},\tilde{X}^{t,v,\mu}_T)\partial_x\tilde{X}^{t,v,\mu}_T+\partial_\mu g(X^{t,x,\mu}_T,\mu_T^{t,\xi},\tilde{X}^{t,\tilde{\xi}}_T)\partial_\mu \tilde{X}^{t,\tilde{\xi},\mu}_T(v) \Big]\nonumber\\
&+\int_s^T\Big( \partial_xf(\pi^{t,x,\mu}_r)\partial_\mu X^{t,x,\mu}_r(v)+\partial_yf(\pi^{t,x,\mu}_r)\partial_\mu Y^{t,x,\mu}_r(v)+\partial_zf(\pi^{t,x,\mu}_r)\partial_\mu Z^{t,x,\mu}_r(v) \Big)dr\nonumber\\
&+\tilde{\mathbb{E}}\bigg[ \int_s^T\Big(\partial_\mu f(\pi^{t,x,\mu}_r,\tilde{X}^{t,v,\mu}_r)\partial_x\tilde{X}^{t,v,\mu}_r+\partial_\mu f(\pi^{t,x,\mu}_r,\tilde{X}^{t,\tilde{\xi}}_r)\partial_\mu \tilde{X}^{t,\tilde{\xi},\mu}_r(v)\Big)dr \bigg],
\end{align}
where $\partial_\mu X^{t,\xi,\mu}(v):=\partial_\mu X^{t,x,\mu}(v)|_{x=\xi}$. 
It is important to note that, by a slight abuse of notation, the quadruple of processes $(\partial_\mu X^{t,x,\mu}, \partial_\mu Y^{t,x,\mu}, \partial_\mu Z^{t,x,\mu}, \partial_\mu H^{t,x,\mu})$ introduced above is merely a notation for the solution of the above FBSDE, and does not yet constitute the $\mu$-derivative of the solution to FBSDE \eqref{eq MVFBSDE fully 2}. Our aim is to prove that this quadruple in fact represents the Lions derivative of the solution to \eqref{eq MVFBSDE fully 2}.\\
Setting $x = \xi$ in \eqref{eq first mu-derivative-2} yields a new FBSDE, which is an MV-FBSDE. This new equation relates to \eqref{eq first mu-derivative-2} in the same way that \eqref{eq first mu-derivative alter} relates to \eqref{eq first mu-derivative}. We denote its solution by
\begin{align*}
(\partial_\mu X^{t,\xi,\mu}(v),&\partial_\mu Y^{t,\xi,\mu}(v),\partial_\mu Z^{t,\xi,\mu}(v),\partial_\mu H^{t,\xi,\mu}(v,\cdot))\\
&:=
(\partial_\mu X^{t,x,\mu}(v),\partial_\mu Y^{t,x,\mu}(v),\partial_\mu Z^{t,x,\mu}(v),\partial_\mu H^{t,x,\mu}(v,\cdot))|_{x=\xi}.
\end{align*}
According to Theorem \ref{thm FBSDE small time duration}, this MV-FBSDE and FBSDE \eqref{eq first mu-derivative-2} admit unique solutions which satisfy the following estimates:
\begin{align*}
&\|(\partial_\mu X^{t,x,\mu}(v),\partial_\mu Y^{t,x,\mu}(v),\partial_\mu Z^{t,x,\mu}(v),\partial_\mu H^{t,x,\mu}(v,\cdot))\|_{2p}\leq C_p,\\
&\|(\partial_\mu X^{t,\xi,\mu}(v),\partial_\mu Y^{t,\xi,\mu}(v),\partial_\mu Z^{t,\xi,\mu}(v),\partial_\mu H^{t,\xi,\mu}(v,\cdot))\|_{2p}\leq C_p.
\end{align*}
Let $(\overline{\xi},\overline{\eta})$ and $(\tilde{\xi},\tilde{\eta})$ be defined on independent copies $(\overline{\Omega},\overline{\mathscr{F}},\overline{\mathbb{P}})$ and $(\tilde{\Omega},\tilde{\mathscr{F}},\tilde{\mathbb{P}})$ of the original probability space $(\Omega,\mathscr{F},\mathbb{P})$, respectively. We replace $v$ by $\overline{\xi}$ in \eqref{eq first mu-derivative-2}, multiply both sides by $\overline{\eta}$, and then take the expectation $\overline{\mathbb{E}}$. Note that
\begin{align*}
\overline{\mathbb{E}}\tilde{\mathbb{E}}\big[\partial_\mu b(\pi^{t,x,\mu},\tilde{X}^{t,\tilde{\xi},\mu}_r)\partial_x \tilde{X}^{t,\tilde{\xi},\mu}_r\cdot\overline{\eta}\big]&=\tilde{\mathbb{E}}\big[\partial_\mu b(\pi^{t,x,\mu},\tilde{X}^{t,\tilde{\xi},\mu}_r)\partial_x \tilde{X}^{t,\tilde{\xi},\mu}_r\cdot\tilde{\eta}\big],\\
\overline{\mathbb{E}}\tilde{\mathbb{E}}\big[\partial_\mu b(\pi^{t,x,\mu},\tilde{X}^{t,\tilde{\xi},\mu}_r)\partial_\mu \tilde{X}^{t,\tilde{\xi},\mu}_r(\overline{\xi})\cdot\overline{\eta}\big]&=\tilde{\mathbb{E}}\big[\partial_\mu b(\pi^{t,x,\mu},\tilde{X}^{t,\tilde{\xi},\mu}_r)\overline{\mathbb{E}}[\partial_\mu \tilde{X}^{t,\tilde{\xi},\mu}_r(\overline{\xi})\cdot\overline{\eta}]\big],
\end{align*}
and similar for the terms involving $\sigma,h,f,g$. 
Then compare with equation \eqref{eq first mu-derivative}, by the uniqueness of the solution to FBSDE, we obtain the first assertion.\\
The proof of the linearity and continuity of $(\mathcal{X}^{t,x,\mu}(\cdot),\mathcal{Y}^{t,x,\mu}(\cdot),\mathcal{Z}^{t,x,\mu}(\cdot),\mathcal{H}^{t,x,\mu}(\cdot))$ follows an argument analogous to that of Lemma 6.1 in \cite{li2018mean} and is thus omitted.
\end{proof}
Next, we establish an estimate for the solution of \eqref{eq first mu-derivative-2}.
\begin{proposition}\label{estimateofeqfirstmu-derivative-2}
For any $p\geq 1$, there exists a constant $C_p>0$ depending on $p$ and $L$ such that for any $t\in[0,T],x,x',v,v'\in \mathbb{R}^d$ and $\mu,\mu'\in\mathcal{P}_2(\mathbb{R}^d)$,
\begin{align*}
\big\|\big(\partial_\mu& X^{t,x,\mu}(v)-\partial_\mu X^{t,x',\mu'}(v'),\partial_\mu Y^{t,x,\mu}(v)-\partial_\mu Y^{t,x',\mu'}(v'),\partial_\mu Z^{t,x,\mu}(v)-\partial_\mu Z^{t,x',\mu'}(v'),\\
&\qquad \partial_\mu H^{t,x,\mu}(v,\cdot)-\partial_\mu H^{t,x',\mu'}(v',\cdot)\big)\big\|_{2p}
\leq C_p\big(|x-x'|^{2p}+|v-v'|^{2p}+W_2(\mu,\mu')^{2p}\big).
\end{align*}
\end{proposition}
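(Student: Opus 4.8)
The plan is to treat \eqref{eq first mu-derivative-2} as a \emph{linear} MV-FBSDE in the quadruple $(\partial_\mu X^{t,x,\mu}(v),\partial_\mu Y^{t,x,\mu}(v),\partial_\mu Z^{t,x,\mu}(v),\partial_\mu H^{t,x,\mu}(v,\cdot))$ and to estimate the difference of the two solutions attached to parameters $(x,v,\mu)$ and $(x',v',\mu')$ by the same $2p$-machinery used in Theorem \ref{thm p-estimate}. Write $\Delta\partial_\mu X_r:=\partial_\mu X^{t,x,\mu}_r(v)-\partial_\mu X^{t,x',\mu'}_r(v')$, and similarly for the other three components. Subtracting the two copies of \eqref{eq first mu-derivative-2}, the difference solves a linear MV-FBSDE with zero initial datum whose multiplicative coefficients are the first-order derivatives of $b,\sigma,h,f,g$ evaluated along $\pi^{t,x,\mu}$, hence bounded by $L$, plus an inhomogeneous forcing term $\mathcal{I}_r$ collecting every mismatch between the two parameter sets.

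First I would isolate and bound $\mathcal{I}_r$. It is built from three kinds of contributions: (i) differences of the coefficient derivatives evaluated at $\pi^{t,x,\mu}_r$ versus $\pi^{t,x',\mu'}_r$, each multiplied by a component of the second solution $(\partial_\mu X^{t,x',\mu'}(v'),\dots)$; (ii) differences of the forcing terms involving $\partial_x\tilde X^{t,v,\mu}$ versus $\partial_x\tilde X^{t,v',\mu'}$; and (iii) differences in the $\partial_\mu$-coefficients evaluated at $\tilde X^{t,v,\mu}$ versus $\tilde X^{t,v',\mu'}$. For (i) I would use the joint Lipschitz continuity of $\partial_\bullet b,\partial_\bullet\sigma,\partial_\bullet h,\partial_\bullet f,\partial_\bullet g$ from Assumption \ref{assumption first derivative} together with the $2p$-continuity of $\Theta^{t,x,\mu}$ in $(x,\mu)$ from Theorem \ref{thm p-estimate}; since the factor $\partial_\mu X^{t,x',\mu'}(v')$ is controlled only in $L^{2p}$ and not pointwise, I would split the product by H\"older's inequality and absorb the higher moments of the derivative processes through the uniform bounds $\|(\partial_\mu X^{t,x,\mu}(v),\dots)\|_{2p}\le C_p$ recorded just before the statement. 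For (ii) the crucial input is the $2p$-Lipschitz estimate of $\partial_x X^{t,x,\mu}$ in $(x,\mu)$ proved in the preceding proposition, which gives $\|\partial_x X^{t,v,\mu}-\partial_x X^{t,v',\mu'}\|_{2p}\le C_p(|v-v'|^{2p}+W_2(\mu,\mu')^{2p})$. For (iii) I would again combine the $2p$-continuity of $X^{t,v,\mu}$ in $(v,\mu)$ with the Lipschitz property of $\partial_\mu b,\partial_\mu\sigma,\partial_\mu h,\partial_\mu f,\partial_\mu g$. Taken together these yield $\mathbb{E}\int_t^T|\mathcal{I}_r|^{2p}dr\le C_p(|x-x'|^{2p}+|v-v'|^{2p}+W_2(\mu,\mu')^{2p})$, together with an analogous bound for the terminal forcing.

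With the forcing bounded, I would close the estimate exactly as in Theorem \ref{thm p-estimate}: apply It\^o's formula to $|\Delta\partial_\mu X_s|^{2p}$ and $|\Delta\partial_\mu Y_s|^{2p}$, use the Burkholder--Davis--Gundy inequality, exploit the positivity of $N(dr,d\theta)$ to discard the jump contribution in the forward estimate, and use the integrated Lipschitz condition \eqref{eq Ltheta} to control the compensated jump term; Young's inequality then separates the $Z$- and $H$-contributions. Because the multiplicative coefficients are bounded by $L$, the $2p$-analogue of the a priori estimate \eqref{eq theta estimate} of Theorem \ref{thm FBSDE small time duration} applies to the linear difference system on $T\le\delta(L)$ and gives $\|\Delta\|_{2p}\le C_p\,(\text{forcing bound})$, which is the claim; the passage to arbitrary $T$ is then achieved by the subdivision-and-induction argument of Theorem \ref{thm p-estimate}.

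The main obstacle I anticipate is the genuinely McKean--Vlasov, self-referential character of \eqref{eq first mu-derivative-2}: the driver contains $\tilde{\mathbb{E}}[\,\cdots\partial_\mu\tilde X^{t,\tilde\xi,\mu}_r(v)\,\cdots]$, so the difference of the derivative processes reappears on both sides of its own estimate. This forces the bound to be closed through the contraction structure of Theorem \ref{thm FBSDE small time duration} rather than by a plain Gronwall argument, and it couples the estimate for $(x,v,\mu)$ to that for the copy $(\tilde\xi,\mu)$ under the auxiliary expectation $\tilde{\mathbb{E}}$. Carefully matching the copies — ensuring that the $2p$-bounds on $\partial_x\tilde X^{t,\tilde\xi,\mu}$ and $\partial_\mu\tilde X^{t,\tilde\xi,\mu}$ transfer correctly through $\tilde{\mathbb{E}}$ — together with the H\"older bookkeeping needed to pair an $O(|x-x'|+|v-v'|+W_2(\mu,\mu'))$ factor against only-$L^{2p}$-bounded derivative processes, is where the real work lies.
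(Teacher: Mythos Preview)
Your proposal is correct and follows essentially the same route as the paper: form the MV-FBSDE satisfied by the difference quadruple, control the inhomogeneous forcing via the Lipschitz regularity of the coefficient derivatives together with the $2p$-bounds and $2p$-continuity already established for $\Theta^{t,x,\mu}$ and $\partial_x X^{t,x,\mu}$, and then close with the $2p$-estimate of Theorem~\ref{thm p-estimate}. The paper merely states this outline (referring to Proposition~6.1 in \cite{li2018mean}) without spelling out the H\"older bookkeeping or the handling of the self-referential $\tilde{\mathbb{E}}[\cdots\partial_\mu\tilde X^{t,\tilde\xi,\mu}_r(v)\cdots]$ term that you correctly flag as the delicate point.
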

The proof of Proposition \ref{estimateofeqfirstmu-derivative-2} follows the same argument as that of Proposition 6.1 in \cite{li2018mean}. Specifically, we construct an MV-FBSDE whose solution is given by 
\begin{align*}
\big(\partial_\mu X^{t,x,\mu}(v)-\partial_\mu X^{t,x',\mu'}(v'),\partial_\mu Y^{t,x,\mu}(v)-\partial_\mu Y^{t,x',\mu'}(v'),\partial_\mu Z^{t,x,\mu}(v)-\partial_\mu Z^{t,x',\mu'}(v'),\\
\partial_\mu H^{t,x,\mu}(v,\cdot)-\partial_\mu H^{t,x',\mu'}(v',\cdot)\big).
\end{align*}
Then, applying Theorem \ref{thm p-estimate} and a standard estimate for MV-FBSDE yields the desired result.\\

Finally, denoting by $( X^{t,x,\xi}, Y^{t,x,\xi},Z^{t,x,\xi},H^{t,x,\xi}):=( X^{t,x,\mathbb{P}_\xi}, Y^{t,x,\mathbb{P}_\xi},Z^{t,x,\mathbb{P}_\xi},H^{t,x,\mathbb{P}_\xi})$ the lifted function, we'll prove that $(\mathcal{X}^{t,x,\mu},\mathcal{Y}^{t,x,\mu},\mathcal{Z}^{t,x,\mu},\mathcal{H}^{t,x,\mu})$ is the Gateaux derivative of $( X^{t,x,\xi}, Y^{t,x,\xi},Z^{t,x,\xi},H^{t,x,\xi})$ w.r.t. $\xi$.
\begin{theorem}\label{thm first mu-derivative}
Suppose that Assumptions \ref{assumption SDE}-\ref{assumption first derivative} hold. Then, for any $t\in[0,T]$ and $x\in\mathbb{R}^d$, the mapping $\xi\mapsto( X^{t,x,\xi}, Y^{t,x,\xi},Z^{t,x,\xi},H^{t,x,\xi})$ 
is Gateaux differentiable, and its Gateaux derivative at $\xi$ in the direction $\eta$ is $$(\mathcal{X}^{t,x,\mu}(\eta),\mathcal{Y}^{t,x,\mu}(\eta),\mathcal{Z}^{t,x,\mu}(\eta),\mathcal{H}^{t,x,\mu}(\eta,\cdot)),$$ that is, 
\begin{align*}
D_\xi X^{t,x,\xi}_s(\eta)=&\mathcal{X}^{t,x,\mu}_s(\eta)=\overline{\mathbb{E}}[\partial_\mu X^{t,x,\mu}_s(\overline{\xi})\cdot\overline{\eta}],\ \ \mathbb{P}-a.s.,\quad\forall s;\\
D_\xi Y^{t,x,\xi}_s(\eta)=&\mathcal{Y}^{t,x,\mu}_s(\eta)=\overline{\mathbb{E}}[\partial_\mu Y^{t,x,\mu}_s(\overline{\xi})\cdot\overline{\eta}],\ \ \mathbb{P}-a.s.,\quad\forall s;\\
D_\xi Z^{t,x,\xi}_s(\eta)=&\mathcal{Z}^{t,x,\mu}_s(\eta)=\overline{\mathbb{E}}[\partial_\mu Z^{t,x,\mu}_s(\overline{\xi})\cdot\overline{\eta}],\ \ d\mathbb{P}ds-a.e.;\\
D_\xi H^{t,x,\xi}_s(\eta,\theta)=&\mathcal{H}^{t,x,\mu}_s(\eta,\theta)=\overline{\mathbb{E}}[\partial_\mu H^{t,x,\mu}_s(\overline{\xi},\theta)\cdot\overline{\eta}],\ \ d\mathbb{P}d\nu ds-a.e.
\end{align*}
\end{theorem}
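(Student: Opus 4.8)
The plan is to prove Gateaux differentiability straight from the definition: fix the direction, perturb the initial random variable, and show that the normalized difference quotient of the lifted solution converges, in the product norm $\|\cdot\|$ of Theorem~\ref{thm FBSDE small time duration}, to the solution of the linear FBSDE \eqref{eq first mu-derivative}. Concretely, I would fix $\xi,\eta\in L^2(\Omega,\mathscr{F}_t,\mathbb{P};\mathbb{R}^d)$ and for $\varepsilon\in(0,1]$ set $\xi^\varepsilon:=\xi+\varepsilon\eta$, $\mu^\varepsilon:=\mathbb{P}_{\xi^\varepsilon}$, writing $\Theta^{t,x,\mu^\varepsilon}$ for the solution of \eqref{eq MVFBSDE fully 2} driven by the perturbed flow $\mu^{t,\xi^\varepsilon}_r=\mathbb{P}_{X^{t,\xi^\varepsilon}_r}$, and introduce the normalized increments
\begin{align*}
\Delta^\varepsilon X_s:=\tfrac{1}{\varepsilon}\big(X^{t,x,\mu^\varepsilon}_s-X^{t,x,\mu}_s\big),\qquad \Delta^\varepsilon Y_s,\ \Delta^\varepsilon Z_s,\ \Delta^\varepsilon H_s\ \text{analogously}.
\end{align*}
Since $W_2(\mu^\varepsilon,\mu)\le \varepsilon(\mathbb{E}|\eta|^2)^{1/2}$, Theorem~\ref{thm p-estimate} yields $\|\Theta^{t,x,\mu^\varepsilon}-\Theta^{t,x,\mu}\|_{2p}\le C_{p,L}\,\varepsilon^{2p}(\mathbb{E}|\eta|^2)^{p}$, so the quadruple $(\Delta^\varepsilon X,\Delta^\varepsilon Y,\Delta^\varepsilon Z,\Delta^\varepsilon H)$ is bounded in every $\|\cdot\|_{2p}$ uniformly in $\varepsilon$. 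The target is to show it converges to $(\mathcal{X}^{t,x,\mu}(\eta),\mathcal{Y}^{t,x,\mu}(\eta),\mathcal{Z}^{t,x,\mu}(\eta),\mathcal{H}^{t,x,\mu}(\eta,\cdot))$ as $\varepsilon\downarrow0$.

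Next I would derive the linear FBSDE solved by the increments. Expanding each coefficient difference by the fundamental theorem of calculus, the spatial and $(y,z)$-arguments contribute the $\lambda$-averaged derivatives $\partial_x,\partial_y,\partial_z$ acting on $\Delta^\varepsilon X,\Delta^\varepsilon Y,\Delta^\varepsilon Z$, while the measure argument $\mathbb{P}_{X^{t,\xi^\varepsilon}_r}$ versus $\mathbb{P}_{X^{t,\xi}_r}$ is linearized along the interpolating measures, producing a term of the form $\tilde{\mathbb{E}}[\partial_\mu b(\cdots,\tilde X^{\lambda}_r)\,\tilde\Delta^\varepsilon X_r]$, with $\tilde\Delta^\varepsilon X_r=\tfrac1\varepsilon(\tilde X^{t,\tilde\xi^\varepsilon}_r-\tilde X^{t,\tilde\xi}_r)$ the lifted increment on an independent copy. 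The essential structural point is that $\tilde\Delta^\varepsilon X$ is precisely the normalized increment of the forward flow with respect to an $\varepsilon\tilde\eta$-perturbation of its own initial condition; by the already established $x$- and $\mu$-differentiability (Theorem~\ref{thm first x-derivative} and Lemma~\ref{lemma 2}) it converges to $\partial_x\tilde X^{t,\tilde\xi,\mu}_r\tilde\eta+\tilde{\mathcal{X}}^{t,\tilde\xi,\mu}_r(\eta)$, which is exactly the inhomogeneity in \eqref{eq first mu-derivative alter}. Substituting this limit reproduces \eqref{eq first mu-derivative}.

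The convergence is then closed by a stability argument. Subtracting \eqref{eq first mu-derivative} from the increment FBSDE gives a linear FBSDE of the type covered by Theorem~\ref{thm FBSDE small time duration} for the error $(\Delta^\varepsilon X-\mathcal{X}(\eta),\Delta^\varepsilon Y-\mathcal{Y}(\eta),\Delta^\varepsilon Z-\mathcal{Z}(\eta),\Delta^\varepsilon H-\mathcal{H}(\eta,\cdot))$, with a source $R^\varepsilon$ gathering (i) the differences between the $\lambda$-averaged derivatives and the derivatives at the unperturbed point, and (ii) the gap between $\tilde\Delta^\varepsilon X$ and its limit. Using the joint Lipschitz continuity and boundedness of all first-order derivatives in Assumption~\ref{assumption first derivative}, the uniform $\|\cdot\|_{2p}$-bounds above (which justify dominated convergence), one shows $\|R^\varepsilon\|\to0$; the a priori estimate from the contraction then forces $\|\text{error}\|\le C\|R^\varepsilon\|\to0$, whence the difference quotients converge in $\|\cdot\|$. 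Finally, Lemma~\ref{lemma 2} rewrites each limit as $\overline{\mathbb{E}}[\partial_\mu X^{t,x,\mu}_s(\overline\xi)\cdot\overline\eta]$, and likewise for $Y,Z,H$, delivering the claimed identities.

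I expect the main obstacle to be the rigorous handling of the measure-derivative term: interchanging the $\lambda$-interpolation, the independent-copy expectation $\tilde{\mathbb{E}}$, and the limit $\varepsilon\to0$, while resolving the self-referential coupling in which the $\mu$-derivative of the flow is expressed through the $x$- and $\mu$-derivatives of the lifted flow (precisely the reason both \eqref{eq first mu-derivative} and \eqref{eq first mu-derivative alter} are needed). The jump terms compound this, since $R^\varepsilon$ contains integrals against $N(dr,d\theta)$; controlling them relies on the finiteness of $\nu$ and the integrability \eqref{eq Ltheta} of $L(\theta)$, together with the boundedness of $\partial_x h,\partial_y h,\partial_\mu h$ by $L(\theta)$ in Assumption~\ref{assumption first derivative}, so that one may pass to the limit under the compensator.
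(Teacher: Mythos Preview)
Your overall architecture matches the paper's: expand the coefficient differences by the fundamental theorem of calculus, recognize the error quadruple as the solution of a linear FBSDE with small data, and close via the a priori estimate from Theorem~\ref{thm FBSDE small time duration}. The paper even obtains a quantitative rate $\|\text{error}\|_2\le C_L\delta^2(\mathbb{E}|\eta|^2)^2$ rather than mere $o(1)$, thanks to the Lipschitz continuity of the first derivatives in Assumption~\ref{assumption first derivative}.

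There is, however, a genuine gap in your handling of the measure term. In step~5 you assert that $\tilde\Delta^\varepsilon X\to\partial_x\tilde X^{t,\tilde\xi,\mu}\tilde\eta+\tilde{\mathcal X}^{t,\tilde\xi,\mu}(\eta)$ ``by the already established $x$- and $\mu$-differentiability (Theorem~\ref{thm first x-derivative} and Lemma~\ref{lemma 2})''. But Lemma~\ref{lemma 2} does \emph{not} establish $\mu$-differentiability; it only shows that the candidate $\mathcal X^{t,x,\mu}(\eta)$ admits the integral representation $\overline{\mathbb E}[\partial_\mu X^{t,x,\mu}(\overline\xi)\cdot\overline\eta]$. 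That this candidate is the Gateaux derivative is precisely the content of the theorem you are proving, so invoking it here is circular. Consequently, in step~6 you cannot put ``(ii) the gap between $\tilde\Delta^\varepsilon X$ and its limit'' into the source $R^\varepsilon$ and argue $\|R^\varepsilon\|\to0$: you have no independent control on that gap.

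The paper resolves the self-reference differently. After writing the error FBSDE at fixed $x$, it substitutes $x=\xi$. The $\tilde{\mathbb E}[\partial_\mu b(\cdots)(\tilde\Delta^\varepsilon X-\partial_x\tilde X\tilde\eta-\tilde{\mathcal X}(\eta))]$ term then becomes a genuine McKean--Vlasov interaction of the lifted error process with (the law of) itself, not an exogenous source. The resulting system is a \emph{closed} MV-FBSDE for the lifted error, and the a priori estimate of Proposition~\ref{proposition continuity}/\ref{proposition continuity eq2} type closes it directly, yielding $\|\tfrac{1}{\delta}(X^{t,\xi+\delta\eta}-X^{t,\xi})-\mathcal X^{t,\xi,\mu}(\eta),\ldots\|_2\le C_L\delta^2(\mathbb E|\eta|^2)^2$. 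Only after this lifted estimate is in hand can one return to the fixed-$x$ FBSDE and treat the $\tilde{\mathbb E}$ term as a now-small source. You correctly flag the self-referential coupling as the main obstacle in your final paragraph, but the two-step order (close at the lifted level first via the MV-FBSDE estimate, then descend to fixed $x$) is the missing ingredient.
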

\begin{proof}
In fact, it suffices to prove that the directional derivatives of $(X^{t,x,\xi}, Y^{t,x,\xi}, Z^{t,x,\xi}, H^{t,x,\xi})$ exist in every direction $\eta \in L^2(\Omega, \mathscr{F}_t, \mathbb{P}; \mathbb{R}^d)$. That is, we need to show that the following limits exist as $\delta \to 0$:
\begin{align*}
&\mathcal{X}^{t,x,\mu}_{\cdot}(\eta)-\frac{1}{\delta}(X^{t,x,\xi+\delta\eta}_{\cdot}-X^{t,x,\xi}_{\cdot})\to 0,\ \ \text{in} \ \mathcal{S}^2_\mathbb{F};\\
&\mathcal{Y}^{t,x,\mu}_{\cdot}(\eta)-\frac{1}{\delta}(Y^{t,x,\xi+\delta\eta}_{\cdot}-Y^{t,x,\xi}_{\cdot})\to 0,\ \ \text{in} \ \mathcal{S}^2_\mathbb{F};\\
&\mathcal{Z}^{t,x,\mu}_{\cdot}(\eta)-\frac{1}{\delta}(Z^{t,x,\xi+\delta\eta}_{\cdot}-Z^{t,x,\xi}_{\cdot})\to 0,\ \ \text{in} \ \mathcal{M}^2_\mathbb{F};\\
&\mathcal{H}^{t,x,\mu}_{\cdot}(\eta,\cdot)-\frac{1}{\delta}(H^{t,x,\xi+\delta\eta}_{\cdot}-H^{t,x,\xi}_{\cdot})\to 0,\ \ \text{in} \ \mathcal{K}_{\nu}^2.
\end{align*}
By \eqref{eq first mu-derivative}, we have
\begin{align}\label{eq mu-derivative convergence 1}
&\frac{1}{\delta}(X^{t,x,\xi+\delta\eta}_s-X^{t,x,\xi}_s)-\mathcal{X}^{t,x,\mu}_s(\eta)\nonumber\\
=&\,\frac{1}{\delta}\int_t^s\Big( b(\pi^{t,x,\xi+\delta\eta}_r)-b(\pi^{t,x,\xi}_r) \Big)dr-\tilde{\mathbb{E}}\bigg[ \int_t^s\partial_\mu b(\pi^{t,x,\mu}_r,\tilde{X}^{t,\tilde{\xi}}_r)(\partial_x\tilde{X}^{t,\tilde{\xi},\mu}_r\tilde{\eta}+\tilde{\mathcal{X}}^{t,\tilde{\xi},\mu}_r(\eta))dr\bigg]\nonumber\\
&-\int_t^s\Big(\partial_xb(\pi^{t,x,\mu}_r)\mathcal{X}_r^{t,x,\mu}(\eta)+\partial_yb(\pi^{t,x,\mu}_r)\mathcal{Y}^{t,x,\mu}_r(\eta)+\partial_zb(\pi^{t,x,\mu}_r)\mathcal{Z}^{t,x,\mu}_r(\eta)\Big)dr\nonumber\\
&+\frac{1}{\delta}\int_t^s\Big( \sigma(\pi^{t,x,\xi+\delta\eta,(0)}_r)-\sigma(\pi^{t,x,\xi,(0)}_r) \Big)dW_r\nonumber\\&-\tilde{\mathbb{E}}\int_t^s\, \partial_\mu \sigma(\pi^{t,x,\mu,(0)}_r,\tilde{X}^{t,\tilde{\xi}}_r)(\partial_x\tilde{X}^{t,\tilde{\xi},\mu}_r\tilde{\eta}+\tilde{\mathcal{X}}^{t,\tilde{\xi},\mu}_r(\eta))dW_r\nonumber\\
&-\int_t^s\Big(\partial_x\sigma(\pi^{t,x,\mu,(0)}_r)\mathcal{X}_r^{t,x,\mu}(\eta)+\partial_y\sigma(\pi^{t,x,\mu,(0)}_r)\mathcal{Y}^{t,x,\mu}_r(\eta)\Big)dW_r\nonumber\\
&+\frac{1}{\delta}\int_t^s\int_E\Big( h(\pi^{t,x,\xi+\delta\eta,(0)}_{r-})-h(\pi^{t,x,\xi,(0)}_{r-}) \Big)N(dr,d\theta)\nonumber\\
&-\tilde{\mathbb{E}}\int_t^s\,\int_E\, \partial_\mu h(\pi^{t,x,\mu,(0)}_{r-},\tilde{X}^{t,\tilde{\xi}}_{r-},\theta)(\partial_x\tilde{X}^{t,\tilde{\xi},\mu}_{r-}\tilde{\eta}+\tilde{\mathcal{X}}^{t,\tilde{\xi},\mu}_{r-}(\eta))N(dr,d\theta)\nonumber\\
&-\int_t^s\int_E\Big(\partial_x h(\pi^{t,x,\mu,(0)}_{r-},\theta)\mathcal{X}_{r-}^{t,x,\mu}(\eta)+\partial_y h(\pi^{t,x,\mu,(0)}_{r-},\theta)\mathcal{Y}^{t,x,\mu}_{r-}(\eta)\Big)N(dr,d\theta).
\end{align}
Note that 
\begin{align*}
&b(\pi^{t,x,\xi+\delta\eta}_r)-b(\pi^{t,x,\xi}_r)\\=&\,\int_0^1\partial_\lambda\Big( b(r,\lambda(X^{t,x,\xi+\delta\eta}_r-X^{t,x,\xi}_r),\mu_r^{t,\xi+\delta\eta},Y^{t,x,\xi+\delta\eta}_r,Z^{t,x,\xi+\delta\eta}_r) \Big)d\lambda\\
&+\int_0^1\partial_\lambda\Big( b(r,X_r^{t,x,\xi},\lambda(\mu^{t,\xi+\delta\eta}_r-\mu_r^{t,\xi}),Y^{t,x,\xi+\delta\eta}_r,Z^{t,x,\xi+\delta\eta}_r) \Big)d\lambda\\
&+\int_0^1\partial_\lambda\Big( b(r,X_r^{t,x,\xi},\mu_r^{t,\xi},\lambda(Y^{t,x,\xi+\delta\eta}_r-Y^{t,x,\xi}_r),Z^{t,x,\xi+\delta\eta}_r) \Big)d\lambda\\
&+\int_0^1\partial_\lambda\Big( b(r,X_r^{t,x,\xi},\mu_r^{t,\xi},Y^{t,x,\xi}_r,\lambda(Z^{t,x,\xi+\delta\eta}_r-Z^{t,x,\xi}_r) \Big)d\lambda\\
=&\,\alpha_r(\delta)(X^{t,x,\xi+\delta\eta}_r-X^{t,x,\xi}_r)+\beta_r(\delta)(Y^{t,x,\xi+\delta\eta}_r-Y^{t,x,\xi}_r)\\&+\gamma_r(\delta)(Z^{t,x,\xi+\delta\eta}_r-Z^{t,x,\xi}_r)
+\tilde{\mathbb{E}}[\rho_r(\delta)(\tilde{X}^{t,\tilde{\xi}+\delta\tilde{\eta}}_r-\tilde{X}_r^{t,\tilde{\xi}})]+R_r(\delta),  
\end{align*}
where 
\begin{align*}
\alpha_r(\delta):=&\int_0^1\partial_x b(r,X^{t,x,\xi}_r+\lambda(X^{t,x,\xi+\delta\eta}_r-X^{t,x,\xi}_r),\mu_r^{t,\xi},Y_r^{t,x,\xi},Z^{t,x,\xi}_r)d\lambda,\\
\beta_r(\delta):=&\int_0^1\partial_y b(r,X^{t,x,\xi}_r,\mu_r^{t,\xi},Y_r^{t,x,\xi}+\lambda(Y^{t,x,\xi+\delta\eta}_r-Y^{t,x,\xi}_r),Z^{t,x,\xi}_r)d\lambda,\\
\gamma_r(\delta):=&\int_0^1\partial_z b(r,X^{t,x,\xi}_r,\mu_r^{t,\xi},Y_r^{t,x,\xi},Z^{t,x,\xi}_r+\lambda(Z^{t,x,\xi+\delta\eta}_r-Z^{t,x,\xi}_r))d\lambda,\\
\rho_r(\delta):=&\int_0^1\partial_\mu b(r,X^{t,x,\xi}_r,\mathbb{P}_{X^{t,\xi}_r+\lambda(X^{t,\xi+\delta\eta}_r-X^{t,\xi}_r)},Y_r^{t,x,\xi},Z^{t,x,\xi}_r,\tilde{X}^{t,\tilde{\xi}}_r+\lambda(\tilde{X}^{t,\tilde{\xi}+\delta\tilde{\eta}}_r-\tilde{X}_r^{t,\tilde{\xi}}))d\lambda,\\
R_r(\delta):=&\bigg[\int_0^1\partial_x b(r,X^{t,x,\xi}_r+\lambda(X^{t,x,\xi+\delta\eta}_r-X^{t,x,\xi}_r),\mu_r^{t,\xi+\delta\eta},Y_r^{t,x,\xi+\delta\eta},Z^{t,x,\xi+\delta\eta}_r)d\lambda-\alpha_r(\delta)\bigg]\\
&\,\cdot\big(X^{t,x,\xi+\delta\eta}_r-X^{t,x,\xi}_r\big)+\cdots.
\end{align*}
Here we omit from $R_r(\delta)$ the terms containing the $y$- and $z$-derivatives of $b$. 
The Lipschitz continuity of these derivatives, combined with Theorem \ref{thm p-estimate}, implies that
\begin{align*}
\mathbb{E}\int_t^T\,|R_r(\delta)|^2dr\leq C_L\delta^4(\mathbb{E}|\eta|^2)^2.
\end{align*}
A straightforward calculation yields 
\begin{align*}
&\frac{1}{\delta}\alpha_r(\delta)(X^{t,x,\xi+\delta\eta}_r-X^{t,x,\xi}_r)-\partial_x b(\pi_r^{t,x,\xi})\mathcal{X}^{t,x,\mu}_r(\eta)\\
=&\,\Big[\frac{1}{\delta}(X^{t,x,\xi+\delta\eta}_r-X^{t,x,\xi}_r)-\mathcal{X}^{t,x,\mu}_r\Big]\partial_x b(\pi_r^{t,x,\xi})+R_\alpha(r,\delta),
\end{align*}
where
\begin{align*}
R_\alpha(r,\delta):=\frac{1}{\delta}(\alpha_r(\delta)-\partial_x b(\pi^{t,x,\xi}_r))(X^{t,x,\xi+\delta\eta}_r-X^{t,x,\xi}_r).
\end{align*}
Then a standard computation leads to
\begin{align*}
\mathbb{E}\int_t^T\,|R_\alpha(r,\delta)|^2dr\leq \frac{1}{\delta^2}\int_t^T\,\frac{1}{2}\int_0^1\,  L\mathbb{E}|X^{t,x,\xi+\delta\eta}_r-X^{t,x,\xi}_r|^4d\lambda dr\leq C_L\delta^2(\mathbb{E}|\eta|^2)^2.
\end{align*}
We treat the remaining terms involving $\beta_r(\delta)$, $\gamma_r(\delta)$ and $\rho_r(\delta)$ in a similar manner and then substitute them into \eqref{eq mu-derivative convergence 1} to obtain
\begin{align*}
&\frac{1}{\delta}(X^{t,x,\xi+\delta\eta}_s-X^{t,x,\xi}_s)-\mathcal{X}^{t,x,\mu}_s(\eta)\\
=&\int_t^s\bigg[\partial_x b(\pi^{t,x,\xi}_r)\Big(\frac{1}{\delta}(X^{t,x,\xi+\delta\eta}_r-X^{t,x,\xi}_r)-\mathcal{X}^{t,x,\mu}_r(\eta)\Big)\\
&\qquad+\partial_y b(\pi_r^{t,x,\xi})\Big(\frac{1}{\delta}(Y^{t,x,\xi+\delta\eta}_r-Y^{t,x,\xi}_r)-\mathcal{Y}^{t,x,\mu}_r(\eta)\Big)\\
&\qquad+\partial_z b(\pi^{t,x,\xi}_r)\Big(\frac{1}{\delta}(Z^{t,x,\xi+\delta\eta}_r-Z^{t,x,\xi}_r)-\mathcal{Z}^{t,x,\mu}_r(\eta)\Big)\bigg]dr\\
&+\tilde{\mathbb{E}}\bigg[\int_t^s\partial_\mu b(\pi^{t,x,\xi}_r)\Big(\frac{1}{\delta}(\tilde{X}^{t,\tilde{\xi}+\delta\tilde{\eta}}_r-\tilde{X}_r^{t,\tilde{\xi}})-\tilde{\mathcal{X}}_r^{t,\tilde{\xi},\mu}(\eta)\Big)dr \bigg]\\
&+\cdots\\
&+\int_t^s\Big(\frac{1}{\delta}R_r(\delta)+R_\alpha(r,\delta)+R_\beta(r,\delta)+R_\gamma(r,\delta)+\tilde{\mathbb{E}}[R_\rho(r,\delta)]\Big)dr.
\end{align*}
The previous equation can be viewed as a forward SDE for the process $\frac{1}{\delta}(X^{t,x,\xi+\delta\eta}-X^{t,x,\xi}) - \mathcal{X}^{t,x,\mu}(\eta)$. Similarly, the triple formed by the corresponding differences of $Y$, $Z$, and $H$ constitutes a solution to a backward SDE (see Lemma 6.2 in \cite{li2018mean} for details).
Combining these two parts, we deduce a coupled forward-backward SDE, whose solution is precisely the quadruple
$
(\frac{1}{\delta}(X^{t,x,\xi+\delta\eta}-X^{t,x,\xi})-\mathcal{X}^{t,x,\mu}(\eta),\frac{1}{\delta}(Y^{t,x,\xi+\delta\eta}-Y^{t,x,\xi})-\mathcal{Y}^{t,x,\mu}(\eta),\frac{1}{\delta}(Z^{t,x,\xi+\delta\eta}-Z^{t,x,\xi})-\mathcal{Z}^{t,x,\mu}(\eta),\frac{1}{\delta}(H^{t,x,\xi+\delta\eta}-H^{t,x,\xi})-\mathcal{H}^{t,x,\mu}(\eta)).
$
Then, replacing the deterministic initial condition $x$ with the random variable $\xi$,
we get an MV-FBSDE whose solution is 
$(\frac{1}{\delta}(X^{t,\xi+\delta\eta}-X^{t,\xi})-\mathcal{X}^{t,\xi,\mu}(\eta),\frac{1}{\delta}(Y^{t,\xi+\delta\eta}-Y^{t,\xi})-\mathcal{Y}^{t,\xi,\mu}(\eta),
\frac{1}{\delta}(Z^{t,\xi+\delta\eta}-Z^{t,\xi})-\mathcal{Z}^{t,\xi,\mu}(\eta),\frac{1}{\delta}(H^{t,\xi+\delta\eta}-H^{t,\xi})-\mathcal{H}^{t,\xi,\mu}(\eta)).$
Finally, applying the Lipschitz conditions, the estimates for the $R-$terms and Proposition \ref{proposition continuity eq2} to this MV-FBSDE, we conclude that
\begin{align*}
\bigg\|&\Big(\frac{1}{\delta}(X^{t,\xi+\delta\eta}-X^{t,\xi})-\mathcal{X}^{t,\xi,\mu}(\eta),\frac{1}{\delta}(Y^{t,x,\xi+\delta\eta}-Y^{t,\xi})-\mathcal{Y}^{t,\xi,\mu}(\eta),\\
&\quad\frac{1}{\delta}(Z^{t,\xi+\delta\eta}-Z^{t,\xi})-\mathcal{Z}^{t,\xi,\mu}(\eta),\frac{1}{\delta}(H^{t,\xi+\delta\eta}-H^{t,\xi})-\mathcal{H}^{t,\xi,\mu}(\eta,\cdot)\Big)\bigg\|_2\leq C_L\delta^2(\mathbb{E}|\eta|^2)^2,
\end{align*}
which provides the desired result.
\end{proof}

According to Lemma 6.3 in \cite{li2018mean}, the processes $\mathcal{X}^{t,x,\mu}, \mathcal{Y}^{t,x,\mu}, \mathcal{Z}^{t,x,\mu}, \mathcal{H}^{t,x,\mu}$ are continuous in $\mu$. We have thus shown that $(\partial_\mu X^{t,x,\mu}(v), \partial_\mu Y^{t,x,\mu}(v), \partial_\mu Z^{t,x,\mu}(v), \partial_\mu H^{t,x,\mu}(v, \cdot))$ is the Lions derivative of the solution to FBSDE \eqref{eq MVFBSDE fully 2}.

\section{The second-order derivatives of the solutions}

In this section, we investigate the second-order derivatives of the solutions to equations \eqref{eq MVFBSDE fully 1} and \eqref{eq MVFBSDE fully 2}. First, we introduce the following assumption.
\begin{assumption}\label{assumption second derivative}
For any $(t,\theta)\in[0,T]\times E$, the coefficients $(b,f)(t,\cdot,\cdot,\cdot,\cdot),\ \sigma(t,\cdot,\cdot,\cdot),\ g(\cdot,\cdot)$ and $h(t,\cdot,\cdot,\cdot,\theta)$ satisfy that, for $1\leq i,j\leq d$, \\
(1) $\forall(x,y,z)\in\mathbb{R}^d\times\mathbb{R}\times\mathbb{R}^d$, $b_j(t,x,\cdot,y,z),\sigma_{i,j}(t,x,\cdot,y),h_j(t,x,\cdot,y,\theta),f(t,x,\cdot,y,z),g(x,\cdot)$ are in $\mathscr{C}^{1,1}(\mathcal{P}_2(\mathbb{R}^d))$. Moreover, the derivatives $\partial_\mu b(t,\cdot,\cdot,\cdot,\cdot,\cdot)$ and $\partial_v\partial_\mu b(t,\cdot,\cdot,\cdot,\cdot,\cdot)$ are jointly Lipschitz continuous in $(x,\mu,y,z,v)$ with constant $L$ and are bounded by $L$. Similar for the functions $\sigma,f$ and $g$.\\
(2) $\forall\mu\in\mathcal{P}_2(\mathbb{R}^d)$, $b_j(t,\cdot,\mu,\cdot,\cdot),f(t,\cdot,\mu,\cdot,\cdot)\in C^{2,2,2}_b(\mathbb{R}^d\times\mathbb{R}\times\mathbb{R}^d)$, $\sigma_{i,j}(t,\cdot,\mu,\cdot),h(t,\cdot,\mu,\cdot,\theta)\in C^{2,2}_b(\mathbb{R}^d\times\mathbb{R})$ and $g(\cdot,\mu)\in C^2_b(\mathbb{R}^d)$. All the first- and second-order derivatives of $b,\sigma,f,g$ w.r.t. $x,y,z$ are jointly Lipschitz continuous in  $(x,y,z,\mu)$ with $L$ and are bounded by $L$.\\
(3) All the first- and second-order derivatives of $h(t,\cdot,\cdot,\cdot,\theta)$ w.r.t. $x,y$ are jointly Lipschitz continuous in $(x,y,\mu)$ with $L(\theta)$ and are bounded by $L(\theta)$, where $L(\theta)$ satisfies \eqref{eq Ltheta}.\\
(4) The derivatives $\partial_\mu h(t,x,\mu,y,\theta,v)$ and $\partial_v\partial_\mu h(t,x,\mu,y,\theta,v)$ are jointly Lipschitz continuous in  $(x,y,\mu,v)$ with $L(\theta)$ and are bounded by $L(\theta)$, where  $L(\theta)$ satisfies \eqref{eq Ltheta}.
\end{assumption}
\begin{theorem}\label{thm second derivative}
Let Assumptions \ref{assumption SDE}-\ref{assumption y lip}, \ref{assumption second derivative} hold and $T\leq \delta(L)$. Then, for any $t\in[0,T]$, $x,v\in\mathbb{R}^d$ and $\mu:=\mathbb{P}_\xi\in\mathcal{P}_2(\mathbb{R}^d)$,\\
(1) the mappings
\begin{align*}
x\mapsto(\partial_xX^{t,x,\mu},\partial_xY^{t,x,\mu},\partial_xZ^{t,x,\mu},\partial_xH^{t,x,\mu})
\end{align*}
and 
\begin{align*}
v\mapsto(\partial_\mu X^{t,x,\mu}(v),\partial_\mu Y^{t,x,\mu}(v),\partial_\mu Z^{t,x,\mu}(v),\partial_\mu H^{t,x,\mu}(v,\cdot))
\end{align*}
are $L^2-$differentiable;\\
(2) for any $p\geq 1$, there exists a constant $C_p>0$ depending on $p$, $L$ and $M$, s.t. for $(\mathscr{X}^{t,x,\mu}(v),\mathscr{Y}^{t,x,\mu}(v),\mathscr{Z}^{t,x,\mu}(v),\mathscr{H}^{t,x,\mu}(v,\cdot))\in\{(\partial^2_xX^{t,x,\mu},\partial^2_xY^{t,x,\mu},\partial^2_xZ^{t,x,\mu},\partial^2_xH^{t,x,\mu}),\\(\partial_v\partial_\mu X^{t,x,\mu}(v),\partial_v\partial_\mu Y^{t,x,\mu}(v),\partial_v\partial_\mu Z^{t,x,\mu}(v),\partial_v\partial_\mu H^{t,x,\mu}(v,\cdot))\}$, the following estimates hold
\begin{align*}
\|(\mathscr{X}^{t,x,\mu}(v),\mathscr{Y}^{t,x,\mu}(v),\mathscr{Z}^{t,x,\mu}(v),\mathscr{H}^{t,x,\mu}(v,\cdot))\|_{2p}\leq C_p
\end{align*}
and 
\begin{align*}
\|&(\mathscr{X}^{t,x,\mu}(v)-\mathscr{X}^{t,x',\mu'}(v'),\mathscr{Y}^{t,x,\mu}(v)-\mathscr{Y}^{t,x',\mu'}(v'),\mathscr{Z}^{t,x,\mu}(v)-\mathscr{Z}^{t,x',\mu'}(v'),\\
&\qquad\mathscr{H}^{t,x,\mu}(v,\cdot)-\mathscr{H}^{t,x',\mu'}(v',\cdot))\|_{2p}
\leq C_p(|x-x'|^{2p}+|v-v'|^{2p}+W_2(\mu,\mu')^{2p}).
\end{align*}
\end{theorem}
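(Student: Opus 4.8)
The plan is to obtain the second-order derivatives by differentiating once more the \emph{linear} FBSDE systems that already characterize the first-order derivatives, thereby reducing the statement to the first-order machinery of Section 4 applied one level up. Concretely, the pure spatial Hessian will come from differentiating \eqref{eq first x-derivative} in $x$, and the cross derivative $\partial_v\partial_\mu$ from differentiating the system \eqref{eq first mu-derivative-2} in $v$. In both cases the resulting equation is again linear in the unknown, with inhomogeneous source terms assembled from the already-constructed first-order derivatives together with the second-order derivatives of the coefficients, which exist and are bounded and Lipschitz by Assumption \ref{assumption second derivative}.

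For the spatial part I would first write down, by formally differentiating \eqref{eq first x-derivative} with respect to $x_k$, a candidate FBSDE for $(\partial_x^2X^{t,x,\mu},\partial_x^2Y^{t,x,\mu},\partial_x^2Z^{t,x,\mu},\partial_x^2H^{t,x,\mu})$. Since the measure argument $\mu_r^{t,\xi}$ is frozen in \eqref{eq first x-derivative}, this candidate is a \emph{classical} linear FBSDE: its homogeneous coefficients are the first-order derivatives $\partial_xb,\partial_yb,\partial_zb,\ldots$ evaluated along $\Theta^{t,x,\mu}$ (bounded by $L$), and its source is a finite sum of products of the form $\partial^2_{(x,y,z)}(b,\sigma,f)\cdot\partial_x\Theta\cdot\partial_x\Theta$, together with the jump contribution $\partial^2h\cdot\partial_xX\cdot\partial_xX$ controlled by $L(\theta)$ through \eqref{eq Ltheta}. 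Using the uniform $2p$-bounds on the first derivatives established in Section 4 and H\"older's inequality, this source lies in the correct spaces, so Theorem \ref{thm FBSDE small time duration} provides a unique solution and Theorem \ref{thm p-estimate}, applied to the linear system, yields both the $2p$-bound and the Lipschitz continuity in $(x,v,\mu)$ asserted in part (2). To upgrade the candidate to the genuine $L^2$-derivative, I would rerun the difference-quotient argument of Theorem \ref{thm first x-derivative}: subtract the candidate from the difference quotient of $\partial_x\Theta^{t,\cdot,\mu}$ in the direction $e_k$, expand each coefficient increment by the fundamental theorem of calculus, and show the remainders are $o(1)$ in $L^2$ via the Lipschitz continuity of the second-order derivatives and the $2p$-estimates.

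For the cross derivative I would differentiate \eqref{eq first mu-derivative-2} in $v$. Here $v$ enters in three ways: through the flow $\tilde X^{t,v,\mu}_r$, through its spatial derivative $\partial_x\tilde X^{t,v,\mu}_r$, and as the last argument of the measure-derivative coefficients $\partial_\mu b(\cdot,\tilde X^{t,v,\mu}_r),\partial_\mu\sigma,\partial_\mu h,\partial_\mu f,\partial_\mu g$. Differentiating in $v$ produces, respectively, $\partial_x\tilde X^{t,v,\mu}$, the second spatial derivative $\partial_x^2\tilde X^{t,v,\mu}$ (already constructed above), and the higher measure derivatives $\partial_v\partial_\mu b$, etc., which exist and are bounded and Lipschitz precisely by the $\mathscr{C}^{1,1}(\mathcal{P}_2(\mathbb{R}^d))$ regularity in Assumption \ref{assumption second derivative}(1),(4). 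The resulting candidate is a linear MV-FBSDE, since the copy terms $\tilde{\mathbb E}[\cdots]$ persist; its well-posedness follows from Theorem \ref{thm FBSDE small time duration} after checking that its coefficients are Lipschitz and bounded, exactly as was verified for \eqref{eq first mu-derivative-2} itself. The identification with the true $v$-derivative then proceeds as in Theorem \ref{thm first mu-derivative}, the stability estimate Proposition \ref{proposition continuity eq2} and Theorem \ref{thm p-estimate} supplying the $o(1)$ control of remainders and the $2p$/continuity bounds; continuity in $\mu$ of the Lions second derivative is inherited through the identification of Lemma \ref{lemma 2}.

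The main obstacle is the cross derivative $\partial_v\partial_\mu$. Unlike the spatial case, its defining equation retains the McKean--Vlasov coupling, so the difference-quotient analysis must control increments of mixed terms such as $\tilde{\mathbb E}[\partial_\mu b(\cdot,\tilde X^{t,v,\mu})\partial_x\tilde X^{t,v,\mu}]$, in which the perturbation of $v$ propagates simultaneously into the coefficient's last argument and into the flow derivative $\partial_x\tilde X^{t,v,\mu}$. Handling these remainders requires the joint Lipschitz continuity of $\partial_v\partial_\mu$ of the coefficients together with the uniform $2p$-estimates of Theorem \ref{thm p-estimate} for the second spatial derivatives and the mixed derivatives at once; and because every step carries a jump integral, one must propagate the $L(\theta)$-weighted bounds consistently, relying throughout on the finiteness of $\nu$ and the integrability condition \eqref{eq Ltheta}.
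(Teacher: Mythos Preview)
Your proposal is correct and follows essentially the same approach as the paper: differentiate the first-order linear FBSDE \eqref{eq first x-derivative} in $x$ for the spatial Hessian, differentiate \eqref{eq first mu-derivative-2} in $v$ for the cross derivative, and in each case obtain a linear (MV-)FBSDE whose well-posedness and $2p$-estimates follow from Theorems \ref{thm FBSDE small time duration} and \ref{thm p-estimate}, with the identification carried out by the difference-quotient arguments of Theorems \ref{thm first x-derivative} and \ref{thm first mu-derivative}. The paper also explicitly invokes the two-step procedure of Lemma \ref{lemma 2} (first solve the MV-FBSDE at $x=\xi$, then the classical FBSDE at general $x$), which you allude to when noting that the copy terms persist.
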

\begin{proof}
The proof for $(\partial^2_xX^{t,x,\mu},\partial^2_xY^{t,x,\mu},\partial^2_xZ^{t,x,\mu},\partial^2_xH^{t,x,\mu})$ is exactly the same as in Theorem \ref{thm first x-derivative}. For $(\partial_v\partial_\mu X^{t,x,\mu}(v),\partial_v\partial_\mu Y^{t,x,\mu}(v),\partial_v\partial_\mu Z^{t,x,\mu}(v),\partial_v\partial_\mu H^{t,x,\mu}(v,\cdot))$, we construct the following FBSDE:
\begin{align}\label{eq second y mu derivative}
\partial_v\partial_\mu& X^{t,x,\mu}_s(v)
=\,\int_t^s\Big(\partial_x b(\pi^{t,x,\mu}_r)\partial_v\partial_\mu X^{t,x,\mu}_r+\partial_y b(\pi^{t,x,\mu}_r)\partial_v\partial_\mu Y^{t,x,\mu}_r+\partial_z b(\pi^{t,x,\mu}_r)\partial_v\partial_\mu Z^{t,x,\mu}_r\Big)dr\nonumber\\
&+\int_t^s\tilde{\mathbb{E}}\bigg[\partial_\mu b(\pi_r^{t,x,\mu},\tilde{X}^{t,v,\mu}_r)\partial_x^2 \tilde{X}^{t,v,\mu}_r+\partial_v\partial_\mu b(\pi^{t,x,\mu}_r,\tilde{X}^{t,v,\mu}_r)(\partial_x\tilde{X}^{t,v,\mu}_r)^2 \bigg]dr\nonumber\\
&+\int_t^s\tilde{\mathbb{E}}\bigg[ \partial_\mu b(\pi^{t,x,\mu}_r,\tilde{X}^{t,\tilde{\xi}}_r)\partial_v\partial_\mu \tilde{X}^{t,\tilde{\xi},\mu}_r(v) \bigg]dr+\ldots,\nonumber\\
\partial_v\partial_\mu& Y^{t,x,\mu}_s(v)=\,\partial_xg(X^{t,x,\mu}_T,\mu^{t,\xi}_T)\partial_v\partial_\mu X^{t,x,\mu}_T(y)-\int_s^T\partial_v\partial_\mu Z^{t,x,\mu}_r(v)dW_r\nonumber\\
&+\int_s^T\Big(\partial_x f(\pi^{t,x,\mu}_r)\partial_v\partial_\mu X^{t,x,\mu}_r+\partial_y f(\pi^{t,x,\mu}_r)\partial_v\partial_\mu Y^{t,x,\mu}_r+\partial_z f(\pi^{t,x,\mu}_r)\partial_v\partial_\mu Z^{t,x,\mu}_r\Big)dr\nonumber\\
&+\int_s^T\tilde{\mathbb{E}}\bigg[\partial_\mu f(\pi_r^{t,x,\mu},\tilde{X}^{t,v,\mu}_r)\partial_x^2 \tilde{X}^{t,v,\mu}_r+\partial_v\partial_\mu f(\pi^{t,x,\mu}_r,\tilde{X}^{t,v,\mu}_r)(\partial_x\tilde{X}^{t,v,\mu}_r)^2 \bigg]dr\nonumber\\
&+\int_s^T\tilde{\mathbb{E}}\bigg[ \partial_\mu f(\pi^{t,x,\mu}_r,\tilde{X}^{t,\tilde{\xi}}_r)\partial_v\partial_\mu \tilde{X}^{t,\tilde{\xi},\mu}_r(v) \bigg]dr-\int_s^T\partial_v\partial_\mu H^{t,x,\mu}_r(\theta)\tilde{N}(dr,d\theta),
\end{align}
where we omit the terms corresponding to $\sigma$ and $h$, as they are similar to those for $b$. Following the same procedure as in Lemma \ref{lemma 2}, we set $x = \xi$ in \eqref{eq second y mu derivative} and obtain an MV-FBSDE solved by $(\partial_v\partial_\mu X^{t,\xi,\mu}(v),\partial_v\partial_\mu Y^{t,\xi,\mu}(v),\partial_v\partial_\mu Z^{t,\xi,\mu}(v),\partial_v\partial_\mu H^{t,\xi,\mu}(v,\cdot))$, which, by construction, is equal to $(\partial_v\partial_\mu X^{t,x,\mu}(v),\partial_v\partial_\mu Y^{t,x,\mu}(v),\partial_v\partial_\mu Z^{t,x,\mu}(v),\partial_v\partial_\mu H^{t,x,\mu}(v,\cdot))|_{x=\xi}.$ Then, by Theorem \ref{thm FBSDE small time duration} and Theorem \ref{thm p-estimate}, the MV-FBSDE and FBSDE \eqref{eq second y mu derivative} admit unique solutions, and these solutions satisfy the $2p-$boundedness and continuity.\\
Finally, to verify that $(\partial_v\partial_\mu X^{t,x,\mu}(v),\partial_v\partial_\mu Y^{t,x,\mu}(v),\partial_v\partial_\mu Z^{t,x,\mu}(v),\partial_v\partial_\mu H^{t,x,\mu}(v,\cdot))$ is indeed the derivative of $(\partial_\mu X^{t,x,\mu}(v),\partial_\mu Y^{t,x,\mu}(v),\partial_\mu Z^{t,x,\mu}(v),\partial_\mu H^{t,x,\mu}(v,\cdot))$, we follow the argument in Theorem \ref{thm first mu-derivative} and compute the $L^2$-convergence of the following expressions:
\begin{align*}
&\frac{1}{\delta}(\partial_\mu X^{t,x,\mu}(v+\delta)-\partial_\mu X^{t,x,\mu}(v)-\partial_v\partial_\mu X^{t,x,\mu}(v)),\\&\frac{1}{\delta}(\partial_\mu Y^{t,x,\mu}(v+\delta)-\partial_\mu Y^{t,x,\mu}(v)-\partial_v\partial_\mu Y^{t,x,\mu}(v)),\\&\frac{1}{\delta}(\partial_\mu Z^{t,x,\mu}(v+\delta)-\partial_\mu Z^{t,x,\mu}(v)-\partial_v\partial_\mu Z^{t,x,\mu}(v)),\\&\frac{1}{\delta}(\partial_\mu H^{t,x,\mu}(v+\delta,\cdot)-\partial_\mu H^{t,x,\mu}(v,\cdot)-\partial_v\partial_\mu H^{t,x,\mu}(v,\cdot)).
\end{align*}
This completes the proof.
\end{proof}

\section{Related integral-master equations}
In this section, we establish the relation between the MV-FBSDEs \eqref{eq MVFBSDE fully 1}, \eqref{eq MVFBSDE fully 2} and the master equation \eqref{eq master introduction game} that arises from mean-field games with Poisson jumps.\\
Set the decoupled field of FBSDE \eqref{eq MVFBSDE fully 2} as $V(t,x,\mu):=Y^{t,x,\mu}_t$. Then we have
\begin{proposition}\label{prop time continuity}
Under Assumption \ref{assumption SDE}-\ref{assumption y lip} and \ref{assumption second derivative}, and for $T\leq \delta(L)$, the function $V$ has the following properties:\\
(1) For any $t\in[0,T]$, $V(t,\cdot,\cdot)\in\mathscr{C}^{2,(1,1)}(\mathbb{R}^d\times\mathcal{P}_2(\mathbb{R}^d))$, i.e. $V(t,x,\cdot)\in\mathscr{C}^{1,1}(\mathcal{P}_2(\mathbb{R}^d))$ for any $x\in\mathbb{R}^d$ and $V(t,\cdot,\mu)\in C^2(\mathbb{R}^d)$ for any $\mu\in\mathcal{P}_2(\mathbb{R}^d)$.\\
(2) For any $x\in\mathbb{R}^d$ and $\mu\in\mathcal{P}_2(\mathbb{R}^d)$, $V(\cdot,x,\mu)$ is uniformly Lipschitz continuous in $t$ (hence belongs to $C([0,T])$).\\
(3) Let $\varphi:=\partial_x V,\partial_x^2 V,\partial_\mu V, \partial_v\partial_\mu V$. Then there exist constants $C>0$ and $q\ge 1$ (independent of $x,\mu,v,\varphi$) such that for all $t,t'\in[0,T]$, $x,v\in\mathbb{R}^d$ and $\mu\in\mathcal{P}_2(\mathbb{R}^d)$,
\begin{align*}
|\varphi(t,x,\mu,v)-\varphi(t',x,\mu,v)|\leq C|t-t'|^{\frac{1}{q}}.
\end{align*}
\end{proposition}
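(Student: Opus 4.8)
The plan is to treat the three assertions in turn, building on the differentiability results of Sections 4 and 5.

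\emph{Part (1)} is essentially a matter of collecting the previous results. Since $V(t,x,\mu)=Y^{t,x,\mu}_t$ and the initial datum $X^{t,x,\mu}_t=x$ is deterministic, the spatial and measure derivatives of $V$ are obtained by evaluating at $s=t$ the corresponding derivative processes: $\partial_x V(t,x,\mu)=\partial_x Y^{t,x,\mu}_t$, $\partial_x^2 V(t,x,\mu)=\partial_x^2 Y^{t,x,\mu}_t$, $\partial_\mu V(t,x,\mu,v)=\partial_\mu Y^{t,x,\mu}_t(v)$ and $\partial_v\partial_\mu V(t,x,\mu,v)=\partial_v\partial_\mu Y^{t,x,\mu}_t(v)$. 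Their existence is guaranteed by Theorems \ref{thm first x-derivative}, \ref{thm first mu-derivative} and \ref{thm second derivative}, while the required joint continuity in $(x,\mu,v)$ follows from the Lipschitz estimates of Proposition \ref{estimateofeqfirstmu-derivative-2} and Theorem \ref{thm second derivative}, together with the continuity of the $\mu$-derivative recalled after Theorem \ref{thm first mu-derivative}. This yields $V(t,\cdot,\mu)\in C^2(\mathbb{R}^d)$ and $V(t,x,\cdot)\in\mathscr{C}^{1,1}(\mathcal{P}_2(\mathbb{R}^d))$.

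For \emph{Part (2)}, I would exploit the flow property of the decoupling field. By uniqueness of solutions to \eqref{eq MVFBSDE fully 2}, for $t<t'$ one has $Y^{t,x,\mu}_{t'}=V(t',X^{t,x,\mu}_{t'},\mu_{t'}^{t,\xi})$, and taking expectations in the backward equation (the stochastic integrals vanishing) gives the identity
\begin{align*}
V(t,x,\mu)=\mathbb{E}\big[V(t',X^{t,x,\mu}_{t'},\mu_{t'}^{t,\xi})\big]+\mathbb{E}\int_t^{t'}f(\pi^{t,x,\mu}_r)\,dr.
\end{align*}
The $f$-integral is $O(t'-t)$ by the boundedness of $f$ and the moment estimates of Theorem \ref{thm p-estimate}. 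I then split $V(t',X^{t,x,\mu}_{t'},\mu_{t'}^{t,\xi})-V(t',x,\mu)$ into a spatial increment, handled by a second-order Taylor expansion using $V(t',\cdot,\mu)\in C^2$ — here $\mathbb{E}[X^{t,x,\mu}_{t'}-x]=O(t'-t)$ since the diffusion term has zero mean, and $\mathbb{E}|X^{t,x,\mu}_{t'}-x|^2=O(t'-t)$ with bounded $\partial_x^2 V$ — and a measure increment $V(t',x,\mu_{t'}^{t,\xi})-V(t',x,\mu)$, to which I apply the It\^o--Lions formula for flows of measures (Corollary 3.5 in \cite{guo2023ito}) along $s\mapsto\mu_s^{t,\xi}$ on $[t,t']$. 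Since $\partial_\mu V$ and $\partial_v\partial_\mu V$ are bounded by Part (1) and $\int_E L(\theta)\nu(d\theta)<\infty$ by \eqref{eq Ltheta}, the resulting integrand is bounded, so this term is also $O(t'-t)$. Altogether $|V(t,x,\mu)-V(t',x,\mu)|\leq C|t-t'|$.

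For \emph{Part (3)}, I would run the same flow argument on the derivative FBSDEs. Taking $\varphi=\partial_xV$ as a model case, the chain rule gives $\partial_xY^{t,x,\mu}_{t'}=\partial_xV(t',X^{t,x,\mu}_{t'},\mu_{t'}^{t,\xi})\,\partial_xX^{t,x,\mu}_{t'}$, and taking expectations in \eqref{eq first x-derivative} yields $\partial_xV(t,x,\mu)=\mathbb{E}[\partial_xY^{t,x,\mu}_{t'}]+O(t'-t)$, the $O(t'-t)$ coming from the bounded drift of the derivative backward equation and the moment bounds of the proposition following Theorem \ref{thm first x-derivative}. Writing $\mathbb{E}[\partial_xY^{t,x,\mu}_{t'}]-\partial_xV(t',x,\mu)$ as $\mathbb{E}[(\partial_xV(t',X^{t,x,\mu}_{t'},\mu_{t'}^{t,\xi})-\partial_xV(t',x,\mu))\,\partial_xX^{t,x,\mu}_{t'}]+\partial_xV(t',x,\mu)\,\mathbb{E}[\partial_xX^{t,x,\mu}_{t'}-I]$, the second summand is $O(t'-t)$ as before, while the first is controlled by Cauchy--Schwarz together with the $(x,\mu)$-Lipschitz continuity of $\partial_xV$ and the bounds $\mathbb{E}|X^{t,x,\mu}_{t'}-x|^2+W_2(\mu_{t'}^{t,\xi},\mu)^2=O(t'-t)$, giving $O(|t-t'|^{1/2})$. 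The same scheme, using the corresponding Lipschitz estimates (Theorem \ref{thm second derivative} for $\partial_x^2V$, Proposition \ref{estimateofeqfirstmu-derivative-2} and Theorem \ref{thm second derivative} for $\partial_\mu V$ and $\partial_v\partial_\mu V$), produces the Hölder bound with exponent $1/q$, $q=2$.

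The main obstacle, and the reason Part (2) gives Lipschitz while Part (3) only gives Hölder, lies in the measure component of the time increment. For $V$ itself one has the full $\mathscr{C}^{1,1}$-regularity in $\mu$, so the It\^o--Lions formula absorbs the would-be $(t'-t)^{1/2}$ arising from $W_2(\mu_{t'}^{t,\xi},\mu)=O((t'-t)^{1/2})$ into a genuine $O(t'-t)$ term. For the already-differentiated objects $\partial_xV,\partial_x^2V,\partial_\mu V,\partial_v\partial_\mu V$, an analogous cancellation would require third-order mixed derivatives in the measure argument, which are not available; one must therefore fall back on first-order Lipschitz-in-$W_2$ estimates, and the Wasserstein rate $(t'-t)^{1/2}$ is the best one can extract. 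Verifying carefully that all integrands and remainder terms are uniformly bounded on $[t,t']$ — in particular the jump contributions controlled through \eqref{eq Ltheta} — is the technical heart of the argument.
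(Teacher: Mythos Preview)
Your approach is sound and considerably more explicit than the paper's own proof, which for Parts (2) and (3) simply invokes Lemma A.2 of \cite{li2018mean} and remarks that the extra $(y,z)$-dependence of the forward coefficients does not spoil the estimates there. For Part (1) you and the paper coincide. Your flow-plus-It\^o--Lions argument has the advantage of being self-contained and of explaining \emph{why} $V$ is Lipschitz in $t$ while its derivatives are only H\"older: the $\mathscr{C}^{1,1}$-in-$\mu$ regularity of $V$ lets the It\^o--Lions formula convert the a priori $W_2(\mu_{t'}^{t,\xi},\mu)\sim (t'-t)^{1/2}$ loss into a genuine $O(t'-t)$ term, whereas for the already-differentiated objects this upgrade is unavailable.

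One minor repair is needed. In Part (3), the drift of the derivative backward equation \eqref{eq first x-derivative} contains the term $\partial_z f(\pi_r)\,\partial_x Z^{t,x,\mu}_r$, and $\partial_x Z$ is controlled only through $\mathbb{E}\int_t^T|\partial_x Z_r|^2\,dr\le C$, not pointwise in $r$. Hence
\[
\bigg|\mathbb{E}\int_t^{t'}\partial_z f(\pi_r)\cdot\partial_x Z_r\,dr\bigg|\le L\,(t'-t)^{1/2}\Big(\mathbb{E}\int_t^{t'}|\partial_x Z_r|^2\,dr\Big)^{1/2}=O\big((t'-t)^{1/2}\big),
\]
so the drift integral is $O((t'-t)^{1/2})$, not $O(t'-t)$ as you wrote. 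This does not affect your conclusion ($q=2$ still works), but the sentence ``the $O(t'-t)$ coming from the bounded drift'' slightly overstates the order. A similar remark applies when you treat $\partial_\mu V$ and $\partial_v\partial_\mu V$, whose backward drifts likewise involve $\partial_\mu Z$ and $\partial_v\partial_\mu Z$.
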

\begin{proof}
Claim (1) follows directly from Theorems \ref{thm first x-derivative}, \ref{thm first mu-derivative} and \ref{thm second derivative}.  Claim (2) and (3) follow from Lemma A.2 in \cite{li2018mean}; note that the dependence of the forward SDE coefficients on $y$ and $z$ does not affect the validity of the estimates.
\end{proof}
\begin{theorem}
Under Assumptions \ref{assumption SDE}-\ref{assumption y lip}, \ref{assumption second derivative} and for $T\leq \delta(L)$, $V\in\mathscr{C}^{1,2,(1,1)}([0,T]\times\mathbb{R}^d\times\mathcal{P}_2(\mathbb{R}^d))$ is the unique classical solution to master equation \eqref{eq master introduction game}.
\end{theorem}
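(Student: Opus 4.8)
The plan is to split the statement into an existence part---that the decoupling field $V(t,x,\mu):=Y^{t,x,\mu}_t$ solves the master equation in the classical sense---and a uniqueness part, both resting on the jump-diffusion It\^o--Lions formula (Corollary 3.5 in \cite{guo2023ito}). The regularity $V\in\mathscr{C}^{1,2,(1,1)}$ is essentially already available: the existence, boundedness and joint continuity of $\partial_x V,\partial_x^2 V,\partial_\mu V$ and $\partial_v\partial_\mu V$ come from Theorems \ref{thm first x-derivative}, \ref{thm first mu-derivative} and \ref{thm second derivative}, and the H\"older-in-time regularity of $V$ and of these derivatives is Proposition \ref{prop time continuity}. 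Hence the genuine work is the derivation of the PDE and the uniqueness argument.

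For existence, I would first record the flow (Markov) property, which follows from the uniqueness of solutions to \eqref{eq MVFBSDE fully 2}: for $t\le s\le T$,
\begin{align*}
Y^{t,x,\mu}_s=V(s,X^{t,x,\mu}_s,\mu^{t,\xi}_s),\qquad\mathbb{P}\text{-a.s.}
\end{align*}
Applying the jump-diffusion It\^o--Lions formula to the right-hand side along the forward flow $(X^{t,x,\mu}_s,\mu^{t,\xi}_s)$ decomposes $V(s,X^{t,x,\mu}_s,\mu^{t,\xi}_s)$ into a drift part, a Brownian martingale, and a compensated-jump martingale. Comparing with the backward dynamics of $Y^{t,x,\mu}$ in \eqref{eq MVFBSDE fully 2} and invoking uniqueness of the semimartingale decomposition yields the martingale identifications
\begin{align*}
Z^{t,x,\mu}_s&=\partial_x V(s,X^{t,x,\mu}_s,\mu^{t,\xi}_s)\,\sigma(\pi^{t,x,\mu,(0)}_s),\\
H^{t,x,\mu}_s(\theta)&=V(s,X^{t,x,\mu}_{s-}+h,\mu^{t,\xi}_s)-V(s,X^{t,x,\mu}_{s-},\mu^{t,\xi}_s),
\end{align*}
together with the equality of the drift part with $-f(\pi^{t,x,\mu}_s)\,ds$. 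Evaluating this drift identity at $s=t$, $X_t=x$, $\mu^{t,\xi}_t=\mu$ and using the continuity from Proposition \ref{prop time continuity} gives exactly \eqref{eq master introduction game} at $(t,x,\mu)$; the terminal condition is read off from $Y^{t,x,\mu}_T=g(X^{t,x,\mu}_T,\mu^{t,\xi}_T)$.

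For uniqueness, let $\widetilde V\in\mathscr{C}^{1,2,(1,1)}$ be any classical solution. Fixing $(t,x,\mu)$ and running the forward flow, I would apply the same It\^o--Lions formula to $\widetilde V(s,X^{t,x,\mu}_s,\mu^{t,\xi}_s)$. Since $\widetilde V$ satisfies the PDE, the generated drift is $-f$ evaluated along the flow with $z$-slot $\partial_x\widetilde V\,\sigma$; thus $\widetilde Y_s:=\widetilde V(s,X_s,\mu_s)$, $\widetilde Z_s:=\partial_x\widetilde V\,\sigma$ and $\widetilde H_s(\theta)$ equal to the corresponding jump increment of $\widetilde V$ give a solution $(X,\widetilde Y,\widetilde Z,\widetilde H)$ of \eqref{eq MVFBSDE fully 2}. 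By the uniqueness in Theorem \ref{thm FBSDE small time duration}, $\widetilde Y_t=Y^{t,x,\mu}_t$, i.e. $\widetilde V(t,x,\mu)=V(t,x,\mu)$; as $(t,x,\mu)$ is arbitrary, $\widetilde V=V$.

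The main obstacle is the rigorous use of the jump-diffusion It\^o--Lions formula and the correct matching of the two non-local terms. One must check that $V$ fits the hypotheses of Corollary 3.5 in \cite{guo2023ito}---in particular that $\frac{\delta V}{\delta\mu}$, $\partial_\mu V$ and $\partial_v\partial_\mu V$ possess the boundedness and continuity needed to integrate against the jump structure of the forward flow---and that the two jump contributions produced by the formula, namely $\int_E(V(t,x+h,\mu)-V(t,x,\mu))\nu(d\theta)$ from the spatial jumps of $X$ and $\int_{\mathbb{R}^d}\int_E(\frac{\delta V}{\delta\mu}(t,x,\mu,y+h)-\frac{\delta V}{\delta\mu}(t,x,\mu,y))\nu(d\theta)\mu(dy)$ from the jumps in the measure argument, line up exactly with the corresponding terms of \eqref{eq master introduction game}. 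The finiteness of $\nu$ together with the bounds on $h$ and on the measure derivatives (Assumptions \ref{assumption first derivative} and \ref{assumption second derivative}) is precisely what renders these jump integrals finite and the term-by-term identification legitimate.
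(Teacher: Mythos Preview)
Your proposal is correct and follows essentially the same route as the paper: apply the jump-diffusion It\^o--Lions formula to $V(s,X^{t,x,\mu}_s,\mu^{t,\xi}_s)$, identify $Z$ and $H$ from the martingale parts, read off the master equation from the drift at $s=t$, and for uniqueness plug a hypothetical second solution into the flow and invoke FBSDE uniqueness. The one ingredient you gloss over is the $C^1$-in-$t$ regularity of $V$ needed to apply the It\^o--Lions formula: Proposition \ref{prop time continuity} only yields Lipschitz (hence H\"older) continuity in $t$, not differentiability; the paper closes this by invoking Theorem~9.1 in \cite{li2018mean} to get $\partial_t V$.
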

\begin{proof}
By Proposition \ref{prop time continuity}, $V(t,\cdot,\cdot)\in\mathscr{C}^{2,(1,1)}(\mathbb{R}^d\times\mathcal{P}_2(\mathbb{R}^d))$ for any $t$. Moreover, Theorem 9.1 in \cite{li2018mean} implies that $V$ is differentiable in $t$. Therefore, we can apply the It\^o–Lions formula (Corollary 3.5 in \cite{guo2023ito}) to $V(s,X^{t,x,\mu}_s,\mu_s^{t,\xi})$, where $\mu^{t,\xi}_s:=\mathbb{P}_{X^{t,\xi}_s}$, and obtain
\begin{align*}
&dV(s,X^{t,x,\mu}_s,\mu_s^{t,\xi})\\
=&\,\bigg\{\partial_t V(s,X^{t,x,\mu}_s,\mu_s^{t,\xi})+\partial_x V(s,X^{t,x,\mu}_s,\mu_s^{t,\xi})b(\pi_s^{t,x,\mu})+\frac{1}{2}\partial^2_x V(s,X^{t,x,\mu}_s,\mu_s^{t,\xi}):\sigma\sigma^{\intercal}(\pi^{t,x,\mu,(0)}_s)\\
&\,\,+\tilde{\mathbb{E}}\Big[ \partial_\mu V(s,X^{t,x,\mu}_s,\mu_s^{t,\xi},\tilde{X}^{t,\xi}_s)b(\tilde{\pi}^{t,x,\mu}_s)+\frac{1}{2}\partial_v\partial_\mu V(s,X^{t,x,\mu}_s,\mu_s^{t,\xi}):\sigma\sigma^{\intercal}(\tilde{\pi}^{t,x,\mu,(0)}_s)\\
&\,\,+\int_E\Big(\frac{\delta V}{\delta\mu}(s,X^{t,x,\mu}_s,\mu_s^{t,\xi},\tilde{X}^{t,x,\mu}_s+h(\tilde{\pi}^{t,x,\mu,(0)}_s,\theta))-\frac{\delta V}{\delta\mu}(s,X^{t,x,\mu}_s,\mu_s^{t,\xi},\tilde{X}^{t,x,\mu}_s)\Big)\nu(d\theta)\Big]\bigg\}ds\\&+\partial_x V(s,X^{t,x,\mu}_s,\mu_s^{t,\xi})\sigma(\pi^{t,x,\mu,(0)}_s)dW_s\\
&+\int_E\Big( V(s,X^{t,x,\mu}_s+h(\pi^{t,x,\mu,(0)}_s,\theta),\mu_s^{t,\xi})-V(s,X^{t,x,\mu}_s,\mu_s^{t,\xi}) \Big)N(ds,d\theta).
\end{align*}
From the definition of $V$ and the BSDE satisfied by $Y^{t,x,\mu}_s = Y^{s,X^{t,x,\mu}_s,\mu^{t,\xi}_s}_s$, it follows immediately that
\begin{align*}
Z^{t,x,\mu}_s&=\partial_xV(s,X^{t,x,\mu}_s,\mu_s^{t,\xi})\sigma(\pi^{t,x,\mu,(0)}_s),\\
H^{t,x,\mu}_s(\theta)&=V(s,X^{t,x,\mu}_s+h(\pi^{t,x,\mu,(0)}_s,\theta),\mu_s^{t,\xi})-V(s,X^{t,x,\mu}_s,\mu_s^{t,\xi}),
\end{align*}
hence, we have
\begin{align*}
&\int_E\Big( V(s,X^{t,x,\mu}_s+h(\pi^{t,x,\mu,(0)}_s,\theta),\mu_s^{t,\xi})-V(s,X^{t,x,\mu}_s,\mu_s^{t,\xi}) \Big)N(ds,d\theta)\\
=&\int_E H^{t,x,\mu}_s(\theta)\tilde{N}(ds,d\theta)+\int_E\Big( V(s,X^{t,x,\mu}_s+h(\pi^{t,x,\mu,(0)}_s,\theta),\mu_s^{t,\xi})-V(s,X^{t,x,\mu}_s,\mu_s^{t,\xi}) \Big)\nu(d\theta)ds.
\end{align*}
Setting $s=t$ yields
\begin{align*}
0=&\,\partial_tV(t,x,\mu)+\partial_x V(t,x,\mu)b(t,x,\mu,V(t,x,\mu),\partial_xV(t,x,\mu)\sigma(t,x,\mu,V(t,x,\mu)))\\
&+f(t,x,\mu,V(t,x,\mu),\partial_xV(t,x,\mu)\sigma(t,x,\mu,V(t,x,\mu)))\\
&+\frac{1}{2}\partial_x^2V(t,x,\mu):\sigma\sigma^{\intercal}(t,x,\mu,V(t,x,\mu))\\&+\int_E\, (V(t,x+h(t,x,\mu,V(t,x,\mu)),\mu),\theta)-V(t,x,\mu))\nu(d\theta)\\
&+\int_{\mathbb{R}^d}\, \bigg[\partial_\mu V(t,x,\mu,y)b(t,y,\mu,V(t,y,\mu),\partial_xV(t,y,\mu)\sigma(t,y,\mu,V(t,y,\mu)))\\
&\qquad\qquad+\frac{1}{2}\partial_y\partial_\mu V(t,x,\mu,y):\sigma\sigma^{\intercal}(t,y,\mu,V(t,y,\mu))\\
&\qquad\qquad+\int_E\Big(\frac{\delta V}{\delta\mu}(t,x,\mu,y+h(t,y,\mu,V(t,y,\mu),\theta))-\frac{\delta V}{\delta\mu}(t,x,\mu,y)\Big)\nu(d\theta)\bigg]\mu(dy).
\end{align*}
To prove the uniqueness, we assume that there exists another solution $V'$ to the master equation \eqref{eq master introduction game}. Then, by the It\^o-Lions formula, it is easy to verify that $V'(s, X^{t,x,\mu}_s,\mu^{t,\xi}_s)$ satisfies
\begin{align*}
&dV'(s,X^{t,x,\mu}_s,\mu_s^{t,\xi})\\
=&\,\bigg\{ \partial_t V'(s,X^{t,x,\mu}_s,\mu_s^{t,\xi})+\partial_x V'(s,X^{t,x,\mu}_s,\mu_s^{t,\xi})b(\pi_s^{t,x,\mu})+\frac{1}{2}\partial^2_x V'(s,X^{t,x,\mu}_s,\mu_s^{t,\xi}):\sigma\sigma^{\intercal}(\pi^{t,x,\mu,(0)}_s)\\
&\,\,+\tilde{\mathbb{E}}\Big[ \partial_\mu V'(s,X^{t,x,\mu}_s,\mu_s^{t,\xi},\tilde{X}^{t,\xi}_s)b(\tilde{\pi}^{t,x,\mu}_s)+\frac{1}{2}\partial_v\partial_\mu V'(s,X^{t,x,\mu}_s,\mu_s^{t,\xi}):\sigma\sigma^{\intercal}(\tilde{\pi}^{t,x,\mu,(0)}_s)\\
&\,\,+\int_E\Big(\frac{\delta V'}{\delta\mu}(s,X^{t,x,\mu}_s+h(\tilde{\pi}^{t,x,\mu,(0)}_s,\theta),\mu_s^{t,\xi},\tilde{X}^{t,x,\mu}_s)-\frac{\delta V'}{\delta\mu}(s,X^{t,x,\mu}_s,\mu_s^{t,\xi},\tilde{X}^{t,x,\mu}_s)\Big)\nu(d\theta)\Big]\bigg\}ds\\&\,+\partial_x V'(s,X^{t,x,\mu}_s,\mu_s^{t,\xi})\sigma(\pi^{t,x,\mu,(0)}_s)dW_s\\
&+\int_E\Big( V'(s,X^{t,x,\mu}_s+h(\pi^{t,x,\mu,(0)}_s,\theta),\mu_s^{t,\xi})-V'(s,X^{t,x,\mu}_s,\mu_s^{t,\xi}) \Big)
N(ds,d\theta).
\end{align*}
Since $V'$ satisfies equation \eqref{eq master introduction game} as also, we obtain
\begin{align*}
&dV'(s,X^{t,x,\mu}_s,\mu_s^{t,\xi})\\
=&\,f(s,X^{t,x,\mu}_s,\mu^{t,\xi}_s,V'(s,X^{t,x,\mu}_s,\mu_s^{t,\xi}),\partial_xV'(s,X^{t,x,\mu}_s,\mu_s^{t,\xi})\sigma(s,X^{t,x,\mu}_s,\mu^{t,\xi}_s,V'(s,X^{t,x,\mu}_s,\mu_s^{t,\xi})))ds\\
&+\partial_xV'(s,X^{t,x,\mu}_s,\mu_s^{t,\xi})\sigma(s,X^{t,x,\mu}_s,\mu^{t,\xi}_s,V'(s,X^{t,x,\mu}_s,\mu_s^{t,\xi}))dW_s\\
&+\int_E\Big( V'(s,X^{t,x,\mu}_s+h(s,X^{t,x,\mu}_s,V'(s,X^{t,x,\mu},\mu^{t,\xi}_s),\theta),\mu_s^{t,\xi})-V'(s,X^{t,x,\mu}_s,\mu_s^{t,\xi}) \Big)\tilde{N}(ds,d\theta).
\end{align*}
Therefore, the following quadruple
\begin{align*}
(X^{t,x,\mu}_\cdot,V'(\cdot,X^{t,x,\mu}_\cdot,\mu_\cdot^{t,\xi}),\partial_xV'(\cdot,X^{t,x,\mu}_\cdot,\mu_\cdot^{t,\xi})\sigma(\cdot,X^{t,x,\mu},\mu^{t,\xi}_\cdot,V'(\cdot,X^{t,x,\mu}_\cdot,\mu_\cdot^{t,\xi})),\\
V'(s,X^{t,x,\mu}_s+h(s,X^{t,x,\mu}_s,V'(s,X^{t,x,\mu},\mu^{t,\xi}_s),\theta),\mu_s^{t,\xi})-V'(s,X^{t,x,\mu}_s,\mu_s^{t,\xi}))
\end{align*}
is also a solution to FBSDE \eqref{eq MVFBSDE fully 2}. By the uniqueness result of FBSDEs, we have 
\begin{align*}
V'(s,X^{t,x,\mu}_s,\mu_s^{t,\xi})=Y^{t,x,\mu}_s=V(s,X^{t,x,\mu}_s,\mu_s^{t,\xi}),\quad \forall s.
\end{align*}
In particular, setting $s=t$ yields
$V'(t,x,\mu)=V(t,x,\mu),$
which completes the proof.
\end{proof}

\bibliographystyle{abbrv}
\bibliography{Feynman-Kac_formula_for_master_equation}

\end{document}